\newtheorem{theorem}{Theorem}[section]
\newtheorem{lemma}[theorem]{Lemma}
\newtheorem{definition}[theorem]{Definition}
\newtheorem{remark}{Remark}
\numberwithin{equation}{section}\allowdisplaybreaks
\begin{document}

\title{\large\bf  Dispersive decay for the mass-critical generalized Korteweg-de Vries equation and generalized Zakharov-Kuznetsov equations}
\author{\normalsize Minjie  Shan$^{a,*}$\\
\footnotesize
\it $^a$ College of Science, Minzu University of China, Beijing 100081, P.R. China. \\  \\ \footnotesize
\it E-mail:  smj@muc.edu.cn   \\
}
\date{} \maketitle
\thispagestyle{empty}

\begin{abstract}
In this paper,  we discuss  pointwise decay estimate for the solution to the mass-critical generalized Korteweg-de Vries (gKdV) equation with initial data $u_0\in H^{1/2}(\mathbb{R})$. It is showed that nonlinear solution enjoys the same decay rate as linear one. Moreover, we also quantify the decay for solutions to the generalized Zakharov-Kuznetsov equation which is a natural multi-dimensional extension of the gKdV equation. We obtain some decay estimates for nonlinear solutions to generalized Zakharov-Kuznetsov equations with small initial data in $H^2(\mathbb{R}^d)$.
\\
{\bf Keywords:}  Mass-critical gKdV equation, generalized Zakharov-Kuznetsov equations, dispersive decay 
 \\
{\bf MSC 2020:}  primary 35Q53; secondary 37L50 
\end{abstract}
\section{Introduction}
We consider the initial value problem (IVP) for the mass-critical generalized KdV (gKdV)
\begin{equation}
	\left\{
	\begin{aligned}
		&\partial_{t}u +\partial_{x}^{3} u \pm \partial_{x} (u^5) = 0, \\
		&u(0,x)=u_0(x),\ \ \ x\in \mathbb{R}, \ t\in\mathbb{R}. \label{gKdV} \\
	\end{aligned}
	\right.
\end{equation}
where $u(t, x)$ is a real-valued function. With the plus sign, this is a focusing gKdV equation; with the
minus sign, it is defocusing.

The mass 
$$M(u)=\int_{\mathbb{R}}u^2(t, x) dx$$
and the energy
$$E(u)=\frac{1}{2}\int_{\mathbb{R}}(\partial_x u)^2 (t, x) dx\mp \frac{1}{6}\int_{\mathbb{R}}  u^6(t, x) dx$$
 are  conserved by the flow of \eqref{gKdV}. 

This equation enjoys the following scaling symmetry:
\begin{align*}
u(t, x) \longmapsto
u_{\lambda}(t, x) = \lambda^{1/2} u ( \lambda^{3}t, \lambda x)  \hspace{5mm} \text{for} \hspace{2mm} \lambda>0,
\end{align*}
in the sense that if $u(t, x)$ solves \eqref{gKdV} then so does  $u_{\lambda}(t, x)$ with initial datum
$u_{\lambda}(0, x)=\lambda^{1/2} u ( 0, \lambda x).$ It is easy to see that
\begin{align*}M\big(u_{\lambda}(t)\big)=M\big(u(t)\big) \hspace{8mm} \text{for all} \hspace{2mm} \lambda>0.\end{align*}
Hence, we call \eqref{gKdV} the mass-critical generalized KdV equation.

Kenig, Ponce and Vega \cite{KPV93}  proved  that \eqref{gKdV} is global well-posed and scattering in $L^2(\mathbb{R})$  under a smallness condition for initial data.

\begin{theorem}[Kenig-Ponce-Vega \cite{KPV93}, 1993] \label{KPV-gKdV-Globalscattering}
There exists $0<\delta\ll1$ such that for any $u_0 \in L^2(\mathbb{R})$  with
 $$\|u_0\|_{ L^2(\mathbb{R})}<\delta,$$
there exists a unique strong solution $u(t)$ of the IVP \eqref{gKdV} satisfying
	\begin{align}
	& \hspace{13mm} u\in C\big(\mathbb{R}; L^2(\mathbb{R}) \big)\cap L^{\infty}\big(\mathbb{R}; L^2(\mathbb{R}) \big),\label{KPV-gKdV-GlobalscatteringA1}  \\
	&   \|u\|_{L^{5}_{x}L^{10}_{t}}=\left(\int_{\mathbb{R}} \left(\int_{\mathbb{R}} |u(t, x)|^{10} dt \right)^{1/2} dx \right)^{1/5}< \infty,\label{KPV-gKdV-GlobalscatteringA2}\\
	&\hspace{25mm}  	\left\|\partial_x u \right\|_{L^{\infty}_{x}L^{2}_{t}}< \infty,\label{KPV-gKdV-GlobalscatteringA3}
	\end{align}
and the global strong solution $u(t)$ scatters in $L^2(\mathbb{R})$ to a solution of the linear KdV equation as $t\to \pm \infty$, i.e., there exist unique $u_0^{\pm}\in L^2(\mathbb{R})$ such that
	\begin{align}
		\lim_{t\to \pm \infty}\left\|u(t)- e^{-t\partial^3_x}u_0^{\pm}\right\|_{L^{2}_{x}}=0. \label{KPV-gKdV-GlobalscatteringA4}
	\end{align}

Moreover, if  $u_0 \in H^s(\mathbb{R})$  with $s>0$ and $\|u_0\|_{ L^2(\mathbb{R})}<\delta$, then the solution to \eqref{gKdV} satisfies
\begin{align}
u\in C\big(\mathbb{R}; H^s(\mathbb{R}) \big)\cap L^{\infty}\big(\mathbb{R}; H^s(\mathbb{R}) \big)\label{KPV-gKdV-GlobalscatteringA5} 
	\end{align}
and
\begin{align}
\left\|D_x^s\partial_x u \right\|_{L^{\infty}_{x}L^{2}_{t}}< \infty. \label{KPV-gKdV-GlobalscatteringA6}
	\end{align}
\end{theorem}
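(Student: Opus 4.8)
The plan is to prove the small-data global well-posedness and scattering result of Kenig–Ponce–Vega via a contraction mapping argument in a function space built out of the dispersive estimates for the linear KdV group $e^{-t\partial_x^3}$. Let me sketch the standard approach.

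First I would set up the right function space. The key linear estimates are the local smoothing (Kato smoothing) estimate $\|\partial_x e^{-t\partial_x^3} f\|_{L^\infty_x L^2_t} \lesssim \|f\|_{L^2_x}$, the maximal function / Strichartz-type estimate controlling $\|e^{-t\partial_x^3} f\|_{L^5_x L^{10}_t}$, and their inhomogeneous (Duhamel) counterparts. I would define the working space $X$ as those $u$ with finite norm $\|u\|_X = \|u\|_{L^\infty_t L^2_x} + \|u\|_{L^5_x L^{10}_t} + \|\partial_x u\|_{L^\infty_x L^2_t}$, the three quantities appearing in \eqref{KPV-gKdV-GlobalscatteringA1}–\eqref{KPV-gKdV-GlobalscatteringA3}.

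The core of the argument is to show the Duhamel map

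\begin{equation*}
\Phi(u)(t) = e^{-t\partial_x^3} u_0 \mp \int_0^t e^{-(t-s)\partial_x^3}\partial_x\bigl(u^5\bigr)(s)\,ds
\end{equation*}

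is a contraction on a small ball in $X$. For this I would first show that the linear evolution places $e^{-t\partial_x^3}u_0$ in $X$ with norm controlled by $\|u_0\|_{L^2}$ via the estimates above. Then comes the main nonlinear estimate: I must bound $\|\Phi(u)\|_X$ by $\|u_0\|_{L^2} + C\|u\|_X^5$ and the corresponding difference estimate $\|\Phi(u)-\Phi(v)\|_X \lesssim (\|u\|_X^4 + \|v\|_X^4)\|u-v\|_X$. The heart of this is estimating the nonlinear term $\partial_x(u^5)$ after applying Duhamel; I expect to distribute the derivative and the five factors of $u$ so that one factor carries the $L^\infty_x L^2_t$ (smoothing) norm while the remaining four are controlled in $L^5_x L^{10}_t$, using Hölder in the mixed space-time norms. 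This multilinear Hölder balancing — matching the exponents $(5,10)$ against $(\infty,2)$ so that the fifth power closes — is the step I expect to be the main obstacle; the quintic nonlinearity is exactly $L^2$-critical, so the scaling must balance with no room to spare, and one must verify the inhomogeneous smoothing estimate $\|\partial_x \int_0^t e^{-(t-s)\partial_x^3} \partial_x F\,ds\|_{L^\infty_x L^2_t}$ is bounded by a product of the dispersive norms of the factors.

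Once the contraction is established, the fixed point $u\in X$ is the unique solution, giving \eqref{KPV-gKdV-GlobalscatteringA1}–\eqref{KPV-gKdV-GlobalscatteringA3}. For scattering \eqref{KPV-gKdV-GlobalscatteringA4}, I would define $u_0^{\pm} = u_0 \mp \int_0^{\pm\infty} e^{s\partial_x^3}\partial_x(u^5)(s)\,ds$ and show the integral converges in $L^2_x$ using the same nonlinear estimate restricted to $[t,\pm\infty)$, whose contribution tends to zero as $t\to\pm\infty$ by dominated convergence since $\|u\|_X<\infty$. Finally, for the persistence of higher regularity \eqref{KPV-gKdV-GlobalscatteringA5}–\eqref{KPV-gKdV-GlobalscatteringA6}, I would run the same contraction with $D_x^s$ applied, using the fractional Leibniz rule to control $\|D_x^s(u^5)\|$ by $\|D_x^s u\|$ times four copies of the $L^5_x L^{10}_t$ norm already shown finite; since the latter is small, this closes linearly in the $H^s$ quantities and propagates the regularity globally.
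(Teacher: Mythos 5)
This theorem is not proved in the paper at all---it is quoted verbatim from Kenig--Ponce--Vega \cite{KPV93}---and your proposal is precisely the original KPV contraction-mapping argument: a fixed point in the space with norm $\|u\|_{L^\infty_t L^2_x}+\|u\|_{L^5_x L^{10}_t}+\|\partial_x u\|_{L^\infty_x L^2_t}$, where the double smoothing estimate \eqref{KatoSmootEff3} supplies the two derivatives needed to close the critical quintic bound $\|u^5\|_{L^1_x L^2_t}\le\|u\|_{L^5_x L^{10}_t}^5$, exactly the balancing you flagged as the key obstacle. Your outline, including the Duhamel construction of $u_0^{\pm}$ for scattering and the fractional-Leibniz persistence of $H^s$ regularity (which indeed closes linearly because only the $L^2$-critical norm, not $\|u_0\|_{H^s}$, needs to be small), is correct and matches the cited source's proof.
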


By using concentration compactness method, Dodson  \cite{Dodson2017} showed that the defocusing  mass-critical generalized KdV is globally well-posed and scattering for arbitrary initial data  $u_0 \in L^2(\mathbb{R})$.  Furthermore, the global solution satisfies the following spacetime bounds
	\begin{align}
\| u \|_{L^{5}_{x}L^{10}_{t}(\mathbb{R}\times\mathbb{R})}\leq C\big(M(u_0)\big).\label{gKdV-Dodson}
	\end{align}
As there is no local well-posedness in $H^s$ for any $s<0$, this result is sharp,  see \cite{CCT03}.  Besides, for the mass-critical  focusing generalized KdV equation,  the concentration phenomenon of blow up solutions were studied in \cite{KPV2000, KSV12}. We refer to an series of impressive works \cite{MM00, MM01, MM02, MM02a, MM02b, MMR14, MMR15, Merle01} for more information on focusing solitons and on other blow-up solutions.

The scattering results mentioned above indicate that the long-time asymptotic development of solution to the nonlinear equation \eqref{gKdV} behaves like a solution to the linear KdV equation.  The spacetime bounds \eqref{KPV-gKdV-GlobalscatteringA2} and  \eqref{gKdV-Dodson} also hold true for the linear KdV equation, see  \eqref{gKdVL(XT)Norm1} with $\theta=\frac{4}{5}$. In fact, the spacetime inequality  with respect to $L^{5}_{x}L^{10}_{t}$ norm can be seen as another version of Strichartz estimate \cite{Stri77}  in the sense of exchanging the variables  $x$ and $t$. The motivation is the Kato smoothing effect, see \eqref{KatoSmootEff1}. By using a mixed spacetime estimate with first the $L^2$-norm in time $t$ and then the $L^{\infty}$-norm in space $x$, one can gain more smoothing properties which are crucial for handling the nonlinear term with derivative.

Kenig, Ponce and Vega  \cite{KPV89} first observed this principle in the smoothing effect of Strichartz type,  they showed the following estimates
\begin{align}
		\left\|D_x^{\frac{\theta\alpha}{2}}e^{-t\partial^3_x} u_0\right\|_{L_t^qL^p_{x}} &\lesssim
		\|u_0\|_{L^2_{x}} \nonumber
	\end{align}
with $(\theta, \alpha)\in [0, 1]\times [0, 1/2]$ and $(q,p)=(\frac{6}{\theta(\alpha+1)}, \frac{2}{1-\theta})$. The Strichartz estimates for KdV which do not involve gain of derivatives 
\begin{align}
		\left\|e^{-t\partial^3_x} u_0\right\|_{L_t^qL^p_{x}} &\lesssim
		\|u_0\|_{L^2_{x}} \nonumber
	\end{align}
were given previously in \cite{GiVe85, Mar81, Pe85}  with $\frac{6}{q}+\frac{2}{p}=1$, $2\leq p \leq \infty$. The integrability in time  shows that the solutions of the (non)linear KdV equation disperse. It is the emergence of dispersive effect that weakens  the impact of nonlinearity, thus the nonlinear effects become asymptotically negligible. This provides us an intuitive explanation for \eqref{KPV-gKdV-GlobalscatteringA4}. Thereby, we may expect the $L^{\infty}_{x}$ or other $L^p_{x}$ norm of the solution to go to zero as $t \to \pm \infty$. In particular, one natural question to ask is whether solutions to the nonlinear equation exhibit the same dispersive decay as solutions to the corresponding linear equation.

Strichartz estimates can be derived from the classical dispersive estimates by using Hardy-Littlewood-Sobolev inequality and a standard $TT^*$ argument. For the KdV equation, the dispersive estimates read as
\begin{align}
		\left\|e^{-t\partial^3_x}u_0\right\|_{L^r} \lesssim  |t|^{-\frac{1}{3}(1-\frac{2}{r}) } \left\|u_0\right\|_{L^{r'}}  \hspace{5mm}  \text{for } \hspace{1mm} t\neq0  \hspace{1mm} \text{and}\hspace{1mm}   2\leq r \leq \infty.    \label{jcKdV-DisDecay}
	\end{align}
Decay estimates are extremely useful in the study of the long-time asymptotic behaviour of nonlinear dispersive equations of various types, such as KdV, nonlinear Schr\"odinger and nonlinear wave equations.

In recent years, decay estimates pointwise in time for nonlinear dispersive equations have been  widely studied. Note that  \eqref{jcKdV-DisDecay} is not a-priori obvious for nonlinear solution provided that \eqref{KPV-gKdV-GlobalscatteringA4} holds ture. Because, it is not known whether $u_0$ is in $L^{r'}$, and certainly the decay rate is also not known. Actually, there is a close connection between the decay estimate for nonlinear solutions and the asymptotic convergence rate in \eqref{KPV-gKdV-GlobalscatteringA4}. Lin and Strauss \cite{LinStr78} established the decay of the $L^{\infty}$-norm of solutions to the 3D nonlinear Schr\"odinger equation (NLS) by using Morawetz estimate. Grillakis and Machedon \cite{GriMach13} showed the decay estimate for the cubic Hartree equation. Decay estimates can also be derived by using the vector field methods and commutator type estimates, see \cite{Klai85, KlaiPon83, Shatah82, HayTsu86} for more details. Initially, these results were obtained under strong regularity and decay hypotheses; see, for example, \cite{LinStr78, Klai85, KlaiPon83, MorStr72, Seg83} as well as the references therein. Fan and Zhao \cite{FZ21},  Guo, Huang and Song \cite{GHS23} respectively proved $L^{\infty}$ dispersive decay for the energy-critical NLS with initial data in $H^3(\mathbb{R}^3)$. Fan, Staffilani and Zhao \cite{FSZ24} discussed quantitative decay estimates for the cubic NLS with initial data in $H^1(\mathbb{R}^3)$ and with random initial data. In \cite{FZ23}, Fan and Zhao showed the existence of a special solution to defocusing cubic NLS, which lives in $H^s(\mathbb{R}^3)$ for all $s>0$, but scatters to a linear solution in a very slow way. Their construction of initial data was inspired by concentration compactness method. The regularity had been significantly reduced by Fan, Killip, Visan  and Zhao \cite{FKVZ24} very recently. They proved dispersive decay for solutions to the mass-critical NLS  with initial data $u_0\in L^2(\mathbb{R}^d)\cap  L^{r'}(\mathbb{R}^d)$ for $d=1,2,3$. Their work is optimal in the sense that no auxiliary assumptions are made besides finiteness of the critical norm, which is essential for the existence of solutions. Pointwise decay estimate for the energy-critical nonlinear wave equation was obtained by Looi \cite{Loo24}.

In this paper, we discuss the decay behavior of solutions to the mass-critical generalized KdV equation. To be precise, we show analogues of \eqref{jcKdV-DisDecay} for nonlinear solutions.

\begin{theorem} \label{MainResult1}
Assume that $u(t)$ is the global strong solution to the defocusing mass-critical generalized KdV \eqref{gKdV} with initial data $u_0\in H^{\frac{1}{2}}\cap L^{1}$ and $\|u_0\|_{H^{\frac{1}{2}}}\ll1$, then there exists a constant $C=C\big(\|u_0\|_{ H^{\frac{1}{2}}}\big)$, such that 
	\begin{align}
\|u(t,x)\|_{L^{\infty}_{x}}\leq C |t|^{-\frac{1}{3}} \|u_0\|_{L^{1}_{x}}.   \label{MainResult1a}
	\end{align}
\end{theorem}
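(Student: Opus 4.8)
The plan is to start from the Duhamel representation
$$u(t) = e^{-t\partial_x^3}u_0 \mp \int_0^t e^{-(t-s)\partial_x^3}\partial_x(u^5)(s)\,ds,$$
and to show that each piece decays at the linear rate $|t|^{-1/3}$. The linear term is handled directly by the dispersive estimate \eqref{jcKdV-DisDecay} with $r=\infty$, which already yields $\|e^{-t\partial_x^3}u_0\|_{L^\infty_x}\lesssim|t|^{-1/3}\|u_0\|_{L^1_x}$. The whole difficulty is thus concentrated in the Duhamel term, which I would show is of the same order or lower, using as an a-priori budget the global spacetime bounds \eqref{KPV-gKdV-GlobalscatteringA2}, \eqref{KPV-gKdV-GlobalscatteringA3}, \eqref{KPV-gKdV-GlobalscatteringA6} and the persistence of regularity \eqref{KPV-gKdV-GlobalscatteringA5} furnished by Theorem \ref{KPV-gKdV-Globalscattering}, together with the smallness $\|u_0\|_{H^{1/2}}\ll1$.

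For the nonlinear term I would split the time integral (say for $t>0$) into a far region $s\in[0,t/2]$ and a near region $s\in[t/2,t]$. In the far region $|t-s|\sim t$, so the propagator supplies strong decay: absorbing the derivative into the kernel, the differentiated Airy dispersive estimate gives $\|e^{-(t-s)\partial_x^3}\partial_x(u^5)(s)\|_{L^\infty_x}\lesssim|t-s|^{-2/3}\|u^5(s)\|_{L^1_x}=|t-s|^{-2/3}\|u(s)\|_{L^5_x}^5$, and the remaining $L^5_x$ factor is controlled by interpolating the (to-be-bootstrapped) pointwise decay of $u$ against the conserved mass. In the near region there is no time decay available from the kernel, so I would instead exploit the smoothing structure, absorbing the derivative in $\partial_x(u^5)$ through the Kato smoothing effect \eqref{KatoSmootEff1} (in its inhomogeneous, $L^1_xL^2_t\to L^\infty_xL^2_t$ form), paired with the extra decay that the quintic power $u^5$ inherits from the pointwise bound on $u$ itself and with the derivative smoothing–Strichartz family of \cite{KPV89}.

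This naturally leads to a bootstrap. I would set $A(T)=\sup_{0<|t|\le T}|t|^{1/3}\|u(t)\|_{L^\infty_x}$, which is finite for each $T$ by Sobolev embedding $H^{1/2+}(\mathbb{R})\hookrightarrow L^\infty$ combined with \eqref{KPV-gKdV-GlobalscatteringA5}, insert $\|u(s)\|_{L^\infty_x}\le A(T)|s|^{-1/3}$ into the quintic estimates of the previous step, and use the smallness of $\|u_0\|_{H^{1/2}}$ to show that the nonlinear contribution is at most $\tfrac12 A(T)|t|^{-1/3}$ plus the linear term. Absorbing the former closes the bound uniformly in $T$ and gives \eqref{MainResult1a}; propagating the factor $\|u_0\|_{L^1}$ (rather than only the $H^{1/2}$ norm) onto the nonlinear piece requires carrying an $L^1$-type weighted norm through the argument.

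I expect the main obstacle to be the time-integral bookkeeping. Even in the far region, interpolating $\|u(s)\|_{L^5_x}^5\lesssim\|u(s)\|_{L^\infty_x}^3\|u(s)\|_{L^2_x}^2$ produces a factor $|s|^{-1}$ from the assumed decay, whose integral diverges at $s=0$, whereas a uniform-in-$s$ bound grows like $|t|^{1/3}$ and destroys the decay entirely. Reconciling these forces one to interpolate the pointwise decay against the conserved mass with exactly the right exponent and to treat the endpoints $s\approx0$ and $s\approx t$ separately. The near-diagonal region $s\approx t$ is the sharpest point: since the Airy kernel offers no time decay there, the derivative must be defeated by the local smoothing effect rather than by the propagator, and one must verify that the quintic gain still recovers the exact $|t|^{-1/3}$ rate under the single $H^{1/2}$ regularity budget.
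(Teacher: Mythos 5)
Your proposal contains a genuine gap, and it sits exactly where you located the ``main obstacle.'' The far-region estimate you rely on, namely
\begin{equation*}
\bigl\|e^{-(t-s)\partial_x^3}\partial_x (u^5)(s)\bigr\|_{L^\infty_x}\lesssim |t-s|^{-2/3}\,\|u^5(s)\|_{L^1_x},
\end{equation*}
is false: the Airy propagator can absorb at most \emph{half} a derivative in the $L^1\to L^\infty$ dispersive estimate. This is precisely the restriction $\alpha\le 1/2$ in Lemma \ref{PrelDisDecay}; by stationary phase the kernel of $D_x^{\alpha}e^{-t\partial_x^3}$ has size $t^{-1/2}(|x|/t)^{(\alpha-1/2)/2}$ in the oscillatory region (equivalently, $\mathrm{Ai}'$ grows like $|x|^{1/4}$ as $x\to-\infty$), so for $\alpha=1$ there is no uniform bound at all, let alone decay $|t|^{-2/3}$. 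The sharp usable estimate is \eqref{PrelDisDecay2}: a gain of $D_x^{1/2}$ at the price of only $|t-s|^{-1/2}$. Once you replace your estimate by the correct one, your purely pointwise bootstrap does not close, for the reason you yourself flagged: putting three factors of the assumed decay into $\|D_x^{1/2}(u^5)\|_{L^1_x}$ produces a non-integrable $s^{-1}$ (or worse) near $s=0$, while a uniform bound loses the decay. You name this tension but do not resolve it, and resolving it is the actual content of the proof. Separately, your near-region plan cannot work as stated: the inhomogeneous Kato smoothing bound \eqref{KatoSmootEff3} controls an $L^\infty_xL^2_t$ norm, i.e.\ a time average, and there is no direct way to convert that into a pointwise-in-time $|t|^{-1/3}$ bound.

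The paper's proof fixes both points with two ingredients absent from your outline. First, in \emph{both} time regions it applies \eqref{PrelDisDecay2} (half-derivative gain, $|t-s|^{-1/2}$) together with the fractional Leibniz rule \eqref{lem:Leibnizpoly2} to distribute $D_x^{1/2}$ over $u^5$, and then spends only \emph{one} power of the bootstrapped decay $s^{-1/3}$; the remaining factors are controlled by the Kenig--Ponce--Vega spacetime norms $\|u\|_{L^5_xL^{10}_t}$, $\|D_x^{1/2}u\|_{L^{10}_xL^{10/3}_t}$, $\|u\|_{L^4_xL^\infty_t}$, $\|u_x\|_{L^\infty_xL^2_t}$ via Cauchy--Schwarz in time, so that $\int_0^{t/2}s^{-2/3}\,ds\sim t^{1/3}$ combines with $|t|^{-1/2}$ to give exactly $|t|^{-1/3}$. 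Second, the bootstrap norm is two-component:
\begin{equation*}
\|u\|_{X(T)}=\sup_{t\in[0,T]}|t|^{1/3}\|u(t)\|_{L^\infty_x}+\sup_{t\in[0,T]}|t|^{1/6}\bigl\|D_x^{1/2}u(t)\bigr\|_{L^\infty_x},
\end{equation*}
because on $[t/2,t]$ the kernel only yields $\int_{t/2}^t|t-s|^{-1/2}ds\sim t^{1/2}$ and the missing $t^{-5/6}$ must come from $\|u\|_{L^\infty_x}^2\|D_x^{1/2}u\|_{L^\infty_x}$; closing the second component in turn requires the auxiliary bounds of Lemma \ref{gKdVLNorm L245xLqqt} and \eqref{gKdVL(XT)Norm3}. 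Your single-component norm $A(T)$ cannot close the near-region estimate. (Minor additional point: finiteness of $A(T)$ does not follow from Sobolev embedding with data only in $H^{1/2}$, since $H^{1/2}(\mathbb{R})\not\hookrightarrow L^\infty(\mathbb{R})$.)
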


In fact, the long time asymptotic behavior of solution to the gKdV equation
\begin{equation}
	\left\{
	\begin{aligned}
		&\partial_{t}u +\partial_{x}^{3} u + u^k\partial_{x} u = 0, \\
		&u(0,x)=u_0(x), \label{gKdV2} \\
	\end{aligned}
	\right.
\end{equation}
has been intensively studied by many authors.

Kenig, Ponce and Vega \cite{KPV93}  obtained the small data global well-posedness in critical spaces $\dot{H}^{s_k}$ with $s_k=1/2-k/2$ for the IVP \eqref{gKdV2} when $k>4$. 
\begin{theorem}[Kenig-Ponce-Vega \cite{KPV93}, 1993] \label{KPV-gKdV-Global-kg4}
Let $k>4$ and $s_k=1/2-k/2$. Then there exists $\delta_k>0$ such that for any $u_0 \in \dot{H}^{s_k}(\mathbb{R})$  with
 $$\|D_x^{s_k}u_0\|_{ L^2(\mathbb{R})}<\delta_k,$$
there exists a unique strong solution $u(t)$ of the IVP \eqref{gKdV2} satisfying
	\begin{align}
	&  u\in C\big(\mathbb{R}; \dot{H}^{s_k}(\mathbb{R}) \big)\cap L^{\infty}\big(\mathbb{R}; \dot{H}^{s_k}(\mathbb{R}) \big),\label{KPV-gKdV-Global-kg4-A1}  \\
	&   \left\|D_x^{s_k} u_x \right\|_{L^{\infty}_{x}L^{2}_{t}}< \infty,
\hspace{5mm} 	\|D_x^{s_k}u\|_{L^{5}_{x}L^{10}_{t}}< \infty,\label{KPV-gKdV-Global-kg4-A2}
	\end{align}
and 
	\begin{align}
		\big\|D_x^{\frac{1}{10}-\frac{2}{5k}}D_t^{\frac{3}{10}-\frac{6}{5k}}u\big\|_{L^{p_k}_{x}L^{q_k}_{t}}< \infty. \label{KPV-gKdV-Global-kg4-A3}
	\end{align}
where $\frac{1}{p_k}=\frac{2}{5k}+\frac{1}{10}$, $\frac{1}{q_k}=\frac{3}{10}-\frac{4}{5k}$. 

Moreover, the map $u_0 \to u(t)$ from  $\{u_0 \in \dot{H}^{s_k}(\mathbb{R}) | \|D_x^{s_k}u_0\|_{ L^2(\mathbb{R})}<\delta_k\}$ into
the class defined by \eqref{KPV-gKdV-Global-kg4-A1}--\eqref{KPV-gKdV-Global-kg4-A3} is Lipschitz.
\end{theorem}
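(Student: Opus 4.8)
The plan is to establish this by the contraction-mapping principle in a single function space $X$ whose norm is the sum of the scale-invariant spacetime norms appearing on the left of \eqref{KPV-gKdV-Global-kg4-A1}--\eqref{KPV-gKdV-Global-kg4-A3}, applied to the Duhamel map
$$\Phi(u)(t) = e^{-t\partial_x^3}u_0 - \int_0^t e^{-(t-t')\partial_x^3}\big(u^k\partial_x u\big)(t')\,dt'.$$
Because $s_k$ is the scaling-critical exponent for \eqref{gKdV2}, the norm of $X$ is invariant under the equation's scaling, so no positive power of the time horizon will ever appear; this is what will upgrade a purely perturbative local argument into a global one, with smallness of the critical norm doing the work normally played by a short time interval.

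First I would record the linear estimates for the Airy group that control each constituent of $\|\cdot\|_X$. The homogeneous part is handled by three inputs: the sharp Kato smoothing effect in the form $\|D_x^{s_k}\partial_x e^{-t\partial_x^3}u_0\|_{L^\infty_x L^2_t}\lesssim \|D_x^{s_k}u_0\|_{L^2_x}$; the Strichartz--smoothing estimate of Kenig--Ponce--Vega \cite{KPV89} quoted above, specialized to the endpoint giving the $L^5_xL^{10}_t$ norm as in \eqref{KPV-gKdV-GlobalscatteringA2}, to bound $\|D_x^{s_k}e^{-t\partial_x^3}u_0\|_{L^5_xL^{10}_t}$; and a maximal-function estimate that dominates the mixed norm in \eqref{KPV-gKdV-Global-kg4-A3} by $\|D_x^{s_k}u_0\|_{L^2_x}$ once the derivative counts $\tfrac1{10}-\tfrac{2}{5k}$ in $x$ and $\tfrac3{10}-\tfrac{6}{5k}$ in $t$ are matched against $(p_k,q_k)$. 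Together these yield $\|e^{-t\partial_x^3}u_0\|_X\lesssim \|D_x^{s_k}u_0\|_{L^2_x}$. Their inhomogeneous counterparts, obtained by duality and the Christ--Kiselev lemma where the time integral is not over all of $\mathbb{R}$, control $\|\Phi(u)-e^{-t\partial_x^3}u_0\|_X$ by a dual norm of the nonlinearity; the decisive one is the retarded smoothing estimate $\big\|\partial_x^2\int_0^t e^{-(t-t')\partial_x^3}F\,dt'\big\|_{L^\infty_xL^2_t}\lesssim \|F\|_{L^1_xL^2_t}$, whose two-derivative gain absorbs the single derivative in $u^k\partial_x u$, reducing matters to estimating a dual expression such as $\|D_x^{s_k-1}(u^k\partial_x u)\|_{L^1_xL^2_t}$.

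The heart of the argument is then the multilinear estimate $\|\mathcal N(u)\|_X\lesssim \|u\|_X^{k+1}$, where $\mathcal N(u)=\Phi(u)-e^{-t\partial_x^3}u_0$, together with its Lipschitz version $\|\mathcal N(u)-\mathcal N(v)\|_X\lesssim (\|u\|_X+\|v\|_X)^k\|u-v\|_X$. To prove it I would distribute the fractional derivative across the product by the fractional Leibniz and chain rules, place the factor carrying $D_x^{s_k}\partial_x u$ in the Kato norm $L^\infty_xL^2_t$, and absorb the remaining $k$ copies of $u$ using the maximal-function norm of \eqref{KPV-gKdV-Global-kg4-A3} (which, after a Sobolev embedding in $t$, controls $\|u\|_{L^{p_k}_xL^\infty_t}$-type quantities needed to bound $u^k$) and the $L^5_xL^{10}_t$ norm, balancing the exponents by H\"older so that the spatial integrabilities sum to $L^1_x$ and the temporal ones to $L^2_t$; the precise values of $p_k,q_k$ in the statement are exactly what make this bookkeeping close.

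Finally, choosing the radius comparable to $\delta_k$, the homogeneous bound and the multilinear estimate show that $\Phi$ maps the ball $\{\|u\|_X\le 2C\delta_k\}$ into itself and is a contraction once $\delta_k$ is small, so the fixed point is the desired global solution; membership in $X$ gives \eqref{KPV-gKdV-Global-kg4-A1}--\eqref{KPV-gKdV-Global-kg4-A3} together with continuity in $\dot H^{s_k}$, and the Lipschitz dependence of $u_0\mapsto u$ follows from the difference estimate. I expect the main obstacle to be this multilinear estimate: one derivative must be recovered from the smoothing effect with no slack at all, since $s_k$ is critical, so the fractional Leibniz distribution of $D_x^{s_k}$ and the H\"older pairing across the mixed-norm spaces defining $X$ must be arranged precisely, and the maximal-function estimate supplying the $L^\infty_t$ control of $u^k$ is itself the most delicate of the linear inputs.
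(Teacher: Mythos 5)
The paper does not actually prove this theorem---it is quoted verbatim from Kenig--Ponce--Vega \cite{KPV93}---so the only meaningful comparison is with the original proof, which your proposal reconstructs faithfully: a contraction argument in the scaling-critical space built from the Kato smoothing norm $\|D_x^{s_k}u_x\|_{L^\infty_xL^2_t}$, the norm $\|D_x^{s_k}u\|_{L^5_xL^{10}_t}$, and the mixed-derivative norm \eqref{KPV-gKdV-Global-kg4-A3}, with the nonlinear estimate closed by fractional Leibniz and H\"older exactly as in KPV's treatment of the case $k>4$ (writing $u^k\partial_xu=\tfrac{1}{k+1}\partial_x(u^{k+1})$ and feeding $D_x^{s_k}(u^{k+1})\in L^1_xL^2_t$ into the retarded double-smoothing estimate \eqref{KatoSmootEff3}). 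Two small corrections that do not affect the scheme: the Sobolev embedding applied to \eqref{KPV-gKdV-Global-kg4-A3} acts in \emph{both} variables and lands in $L^{5k/4}_xL^{5k/2}_t$ rather than an $L^\infty_t$-type norm---this is exactly the integrability needed so that the spatial exponents of $u^k$, $D_x^{s_k}u$ (or $D_x^{s_k}u_x$) sum to $1$ and the temporal ones to $\tfrac12$---and the paper's $s_k=1/2-k/2$ is a typo for the critical exponent $s_k=1/2-2/k$, which is the value your scaling argument implicitly (and correctly) uses.
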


 Strauss \cite{Stra74}  proved that solutions to \eqref{gKdV2} with $k>4$ decay with the same speed as solutions to the corresponding linear equation, i.e.,
$$\sup_{x\in \mathbb{R}}|u(t, x)|\leq C(1+t)^{-1/3}, \hspace{5mm} \text{as} \hspace{2mm} t\to \infty,$$
if $u_0\in H^{1}\cap L^{1}$ and $\|u_0\|_{L^{1}}+\|u_0\|_{L^{2}}\ll1$. Later, this result was extended to $k>(19-\sqrt{57})/4\approx 2.86$ in a range of articles \cite{ Klai82, KlaiPon83, Stra81, Shatah82, PV90}.  By assuming that the initial data $u_0$ of \eqref{gKdV2} lie  in the weighted Sobolev space $H^{1,1}:=\{f \in L^2; \ \|(1+|x|^2)^{1/2}(1-\partial^2_x)^{1/2}f\|<\infty\}$ and $\|u_0\|_{ H^{1,1}}\ll 1$, for $k>2$ Hayashi and Naumkin  \cite{HayNa98}  established the decay estimate
$$\|u(t)\|_{L^{r}}\leq C(1+t)^{-1/3(1-1/r)}, \hspace{5mm} r\in (4, \infty].$$

Using the perturbation theory and explicit representation of Fourier transform of the nonlinearity,  Naumkin and Shishmarev \cite{NaShi96} obtained the asymptotic expansion for solutions to \eqref{gKdV2} with the integer power of nonlinearity not less than 3 ($k\geq 3$). It is worth mentioning that Rammaha \cite{Rammaha89}  showed solutions to \eqref{gKdV2} with $k\in [0, 1]$ were not asymptotically free and he proposed a conjecture that it will not be asymptotically free also for the case $k\in [1, 2]$. 

Ifrim, Koch and Tataru \cite{IKT2023} investigated dispersive decay for solutions to the KdV equation with small localized initial data. They proved that if the initial data $u_0$ satisfies
$$\|u_0\|_{\dot{B}^{-\frac{1}{2}}_{2, \infty}}+\|xu_0\|_{\dot{H}^{\frac{1}{2}}}\leq \varepsilon \ll 1,$$
then the linear dispersive decay persists for the nonlinear problem on time scale $T_{\varepsilon}=\varepsilon^{-3}$. In other words, the  time scale that marks the earliest possible emergence of either solitons or dispersive shocks is $\varepsilon^{-3}$. 

The second result of this paper is the dispersive estimate for solutions to gKdV equations \eqref{gKdV2} with $k>4$. 

\begin{theorem} \label{MainResult2}
Let $k\in \mathbb{N}$ and  $k>4$. Assume that $u(t)$ is the global strong solution to  \eqref{gKdV2} with initial data $u_0\in H^{\frac{1}{2}}\cap L^{1}$ and $\|u_0\|_{H^{\frac{1}{2}}}\ll1$, then there exists a constant $C=C\big(\|u_0\|_{ H^{\frac{1}{2}}}\big)$, such that 
	\begin{align}
\|u(t,x)\|_{L^{\infty}_{x}(\mathbb{R})}\leq C |t|^{-\frac{1}{3}} \|u_0\|_{L^{1}_{x}(\mathbb{R})}.   \label{MainResult2a}
	\end{align}
\end{theorem}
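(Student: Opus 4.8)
The plan is to control the nonlinear solution through its Duhamel representation and a continuity argument, showing that the Duhamel correction to the free evolution decays at least as fast as $|t|^{-1/3}$. Writing the nonlinearity as $u^{k}\partial_x u=\tfrac{1}{k+1}\partial_x(u^{k+1})$, Duhamel's formula for \eqref{gKdV2} reads
\begin{align*}
u(t)=e^{-t\partial_x^3}u_0-\frac{1}{k+1}\int_0^t e^{-(t-s)\partial_x^3}\partial_x\!\big(u^{k+1}\big)(s)\,ds,\qquad t>0 .
\end{align*}
The linear term already satisfies the claim: choosing $r=\infty$ in the dispersive estimate \eqref{jcKdV-DisDecay} gives $\|e^{-t\partial_x^3}u_0\|_{L^\infty_x}\lesssim|t|^{-1/3}\|u_0\|_{L^1_x}$, which is exactly the right-hand side of \eqref{MainResult2a}. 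Two global facts feed the rest of the argument. First, because the scaling-critical regularity $s_k$ obeys $0<s_k<\tfrac12$ for $k>4$, the hypothesis $\|u_0\|_{H^{1/2}}\ll1$ transfers (by a Fourier-support/interpolation estimate) to $\|D_x^{s_k}u_0\|_{L^2}\ll1$, so Theorem~\ref{KPV-gKdV-Global-kg4} provides the global spacetime bounds \eqref{KPV-gKdV-Global-kg4-A2}--\eqref{KPV-gKdV-Global-kg4-A3}, and persistence of regularity yields $\sup_t\|u(t)\|_{H^{1/2}}\lesssim\|u_0\|_{H^{1/2}}\ll1$. Second, pairing \eqref{gKdV2} with $u$ shows the mass $\|u(t)\|_{L^2}=\|u_0\|_{L^2}$ is conserved. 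I would then set $A:=\sup_{0<s\le T}s^{1/3}\|u(s)\|_{L^\infty_x}$ (finite for each $T$ after the usual qualitative regularization of the data) and aim to bound $A$ independently of $T$.

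The backbone is the split $\int_0^t=\int_0^{t/2}+\int_{t/2}^t$. On the far piece $s\le t/2$ the elapsed time obeys $t-s\ge t/2$, so I would use the derivative-loaded dispersive bound $\|\partial_x e^{-\tau\partial_x^3}f\|_{L^\infty_x}\lesssim|\tau|^{-2/3}\|f\|_{L^1_x}$ (valid since the Airy kernel has bounded derivative) to get
\begin{align*}
\Big\|\int_0^{t/2}e^{-(t-s)\partial_x^3}\partial_x(u^{k+1})(s)\,ds\Big\|_{L^\infty_x}\lesssim t^{-2/3}\int_0^{t/2}\|u(s)\|_{L^{k+1}_x}^{k+1}\,ds .
\end{align*}
The decisive point is to spend exactly two powers of the $L^\infty$-decay: interpolating $\|u\|_{L^{k+1}_x}^{k+1}\lesssim\|u\|_{L^\infty_x}^{2}\|u\|_{L^{k-1}_x}^{k-1}$ and using the Sobolev embedding $\|u\|_{L^{k-1}_x}\lesssim\|u\|_{H^{1/2}}$ (legitimate since $2\le k-1<\infty$) gives $\|u(s)\|_{L^{k+1}_x}^{k+1}\lesssim A^2\|u_0\|_{H^{1/2}}^{k-1}s^{-2/3}$. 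Since $\int_0^{t/2}s^{-2/3}\,ds\simeq t^{1/3}$, the far piece is $\lesssim\|u_0\|_{H^{1/2}}^{k-1}A^2\,t^{-1/3}$: it decays at exactly the target rate, its constant carries the small factor $\|u_0\|_{H^{1/2}}^{k-1}$, and the exponent $2/3<1$ keeps the integral harmless at $s=0$. This two-power interpolation is what $k>4$ buys: any cheaper use of the $L^\infty$-decay fails to produce $t^{-1/3}$, while the full interpolation $\|u\|_{L^\infty}^{k-1}\|u\|_{L^2}^2$ would feed back an uncontrollable power $A^{k-1}$.

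The near piece $t/2\le s\le t$ is the main obstacle: the elapsed time $t-s$ ranges down to $0$, so there is no dispersive gain to absorb the derivative in $\partial_x(u^{k+1})$. Here dispersive decay must be replaced by the Kato smoothing machinery underlying Theorem~\ref{KPV-gKdV-Global-kg4}. Concretely I would estimate the near contribution $\mathcal N(t)$ in $L^\infty_x$ via the one-dimensional Gagliardo--Nirenberg inequality $\|\mathcal N(t)\|_{L^\infty_x}\lesssim\|\mathcal N(t)\|_{L^2_x}^{1/2}\|\partial_x\mathcal N(t)\|_{L^2_x}^{1/2}$ and bound the two factors by the inhomogeneous smoothing/Strichartz estimates, which absorb the derivative at the price of passing to mixed $L^1_xL^2_t$ (and $L^5_xL^{10}_t$) norms over the slab $[t/2,t]$. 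On this slab $s\ge t/2$ gives $\|u(s)\|_{L^\infty_x}\lesssim A\,t^{-1/3}$, so again two powers of this decay supply the factor $t^{-2/3}$, while the surviving spacetime norms are controlled by the finite global quantities \eqref{KPV-gKdV-Global-kg4-A2}--\eqref{KPV-gKdV-Global-kg4-A3} and by $\sup_t\|u(t)\|_{H^{1/2}}^{k-1}\ll1$. The genuinely delicate bookkeeping is reconciling the pointwise-in-$t$ target with the $L^2_t$-based smoothing gain, i.e. converting a time-averaged derivative gain on the dyadic slab back into a clean pointwise $|t|^{-1/3}$ bound; this is exactly the step where the proof of Theorem~\ref{MainResult1} must be adapted rather than quoted.

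Collecting the three contributions yields, for all $t>0$,
\begin{align*}
\|u(t)\|_{L^\infty_x}\le C_0\,|t|^{-1/3}\|u_0\|_{L^1_x}+C_1\,\|u_0\|_{H^{1/2}}^{k-1}\,A^2\,|t|^{-1/3},
\end{align*}
whence $A\le C_0\|u_0\|_{L^1_x}+C_1\|u_0\|_{H^{1/2}}^{k-1}A^2$. Since $\|u_0\|_{H^{1/2}}\ll1$ (small enough relative to $\|u_0\|_{L^1_x}$), a standard continuity/bootstrap argument closes this inequality with $A\lesssim\|u_0\|_{L^1_x}$, which is precisely \eqref{MainResult2a}; the clean $\|u_0\|_{L^1_x}$ factor is inherited from the linear term, and every nonlinear contribution is suppressed by a positive power of the small critical norm. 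The only structural difference from Theorem~\ref{MainResult1} is that the supercritical power $k>4$ improves time-integrability in the far region instead of obstructing it, so the mass-critical argument carries over with room to spare.
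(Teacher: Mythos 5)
There is a genuine gap, and it sits at the heart of your far-field estimate. Your bound for the region $s\le t/2$ rests on the claim
$\|\partial_x e^{-\tau\partial_x^3}f\|_{L^\infty_x}\lesssim|\tau|^{-2/3}\|f\|_{L^1_x}$, justified by the assertion that ``the Airy kernel has bounded derivative.'' This is false. The kernel of $\partial_x e^{-\tau\partial_x^3}$ is $\tau^{-2/3}\mathrm{Ai}'(x\,\tau^{-1/3})$ (up to constants), and $\mathrm{Ai}'$ is \emph{unbounded}: by stationary phase, $\mathrm{Ai}'(-y)\sim \pi^{-1/2}y^{1/4}\cos\bigl(\tfrac23 y^{3/2}+\tfrac{\pi}{4}\bigr)$ as $y\to+\infty$. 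More generally, the kernel of $D_x^{\alpha}e^{-\tau\partial_x^3}$ is bounded if and only if $\alpha\le 1/2$ (the stationary points $\eta=\pm\sqrt{|y|/3}$ contribute $|y|^{\alpha/2-1/4}$), which is precisely why the paper's Lemma \ref{PrelDisDecay} restricts to $\alpha\in[0,1/2]$ and why the strongest usable $L^1\to L^\infty$ estimate is \eqref{PrelDisDecay2}: a gain of $D_x^{1/2}$ at the cost of $|\tau|^{-1/2}$, no more. Once you replace your false estimate by the true one, a residual half-derivative $D_x^{1/2}$ lands on $u^{k+1}$, your clean pointwise-in-time bound $\|u(s)\|_{L^{k+1}_x}^{k+1}\lesssim A^2\|u_0\|_{H^{1/2}}^{k-1}s^{-2/3}$ is no longer available, and you are forced into exactly the machinery the paper uses: the fractional Leibniz rule \eqref{lem:Leibnizpoly2} to distribute $D_x^{1/2}$ over $u^{k+1}$, Cauchy--Schwarz in time, and the global spacetime norms $\|u\|_{L^5_xL^{10}_t}$, $\|D_x^{1/2}u\|_{L^{10}_xL^{10/3}_t}$, $\|u\|_{L^4_xL^\infty_t}$, $\|u_x\|_{L^\infty_xL^2_t}$ from Theorem \ref{KPV-gKdV-Global-kg4} and Lemma \ref{gKdVL(XT)Norm}. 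So the ``simplification'' that distinguishes your far-field argument from the paper's does not survive.

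The near piece $s\in[t/2,t]$ is also not closed: you propose Gagliardo--Nirenberg plus unspecified smoothing estimates and explicitly defer the ``delicate bookkeeping'' of converting the $L^2_t$-based smoothing gain into a pointwise bound, which is the actual content of that step. The paper resolves it much more directly, with no smoothing-to-pointwise conversion at all: it applies the plain dispersive bound \eqref{PrelDisDecay1} ($\alpha=0$, giving $|t-s|^{-1/3}$) to $u^ku_x\in L^1_x$, peels off $k-3$ powers of the $X(T)$ decay (this is where $k>4$ enters, ensuring $|t|^{1/6-(k-3)/3}\le|t|^{-1/3}$), and controls the remaining factor by $\|u^2u_x\|_{L^2_{tx}}\le\|u\|^2_{L^4_xL^\infty_t}\|u_x\|_{L^\infty_xL^2_t}$ using the maximal-function and Kato-smoothing bounds. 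Your overall architecture (Duhamel, the $\int_0^{t/2}+\int_{t/2}^t$ split, the $X(T)$ bootstrap) does match the paper, but both of the substantive estimates inside that architecture are, respectively, based on a false inequality and left unproved.
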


\begin{remark} 
Comparing the previous dispersive decay results for gKdV equations and Theorem \ref{MainResult1} and Theorem \ref{MainResult2} here, we see that the regularity of initial data is lowed from $H^{1}$ to $H^{1/2}$.
\end{remark} 

Besides, we establish dispersive estimates for solutions to the gZK equation \eqref{gZK} which may be seen as a natural multi-dimensional extension of the gKdV equation \eqref{gKdV2}. Our three theorems treat spatial dimensions two, three and four respectively.
\begin{theorem} \label{MainResult3}
Let $4< r \leq \infty$, $\frac{1}{r}+\frac{1}{r'}=1$, $k\in \mathbb{N}$ and  $k\geq 3$. Assume that $u(t)$ is the global strong solution to the 2D gZK equation \eqref{gZK} with small initial data $u_0$  satisfying
\begin{equation}
	\left\{
	\begin{aligned}
		&u_0\in H^{1}\cap L^{r'}, \hspace{3mm}  \|u_0\|_{H^{1}}\ll1,    \hspace{5mm}  \text{if} \hspace{3mm}  k=3,\\
		&u_0\in H^{2}\cap L^{r'}, \hspace{3mm} \|u_0\|_{H^{2}}\ll1,    \hspace{5mm}   \text{if} \hspace{3mm}  k\geq 4, \nonumber \\ 
	\end{aligned}
	\right.
\end{equation}
 then there exists a constant $C$ dependent on $u_0$, such that 
\begin{align}
\| u(t,x,y)  \|_{L^{r}_{xy}(\mathbb{R}^2)} \leq C |t|^{-\frac{2}{3}(1-\frac{2}{r})}\|u_0\|_{L^{r'}_{xy}(\mathbb{R}^2)}. \label{MainResult3a}
	\end{align}
\end{theorem}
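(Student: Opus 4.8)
The plan is to derive the nonlinear decay by bootstrapping from the linear dispersive estimate for the two-dimensional Zakharov-Kuznetsov propagator, which we denote $U(t)$. The first step is to record the linear decay, the two-dimensional counterpart of \eqref{jcKdV-DisDecay}: a stationary-phase analysis of the oscillatory kernel of $U(t)$ yields
\begin{align*}
\left\|U(t)f\right\|_{L^r(\mathbb{R}^2)} \lesssim |t|^{-\frac{2}{3}(1-\frac{2}{r})}\|f\|_{L^{r'}(\mathbb{R}^2)}, \qquad 2 \leq r \leq \infty.
\end{align*}
I would then represent the solution by Duhamel's formula,
\begin{align*}
u(t) = U(t)u_0 - \int_0^t U(t-s)\,\partial_x\!\left(u^{k+1}\right)(s)\,ds.
\end{align*}
The linear term is controlled at once by the estimate above, producing exactly the asserted decay multiplied by $\|u_0\|_{L^{r'}}$, so everything reduces to showing that the Duhamel integral inherits the same rate.

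To treat the Duhamel term I would split the time integral at $s = t/2$. On the early interval $s \in [0, t/2]$ the elapsed time satisfies $t - s \geq t/2$, so the full dispersive factor $|t-s|^{-\frac{2}{3}(1-2/r)} \lesssim |t|^{-\frac{2}{3}(1-2/r)}$ is available; pairing it with the global spacetime bounds on $u$ furnished by the well-posedness theory (the gZK analogues of the Strichartz and smoothing estimates underlying Theorem~\ref{KPV-gKdV-Global-kg4}) renders this contribution finite and of the correct size. On the late interval $s \in [t/2, t]$ dispersion from the group is weak, so instead I would exploit that the solution has itself decayed: under the bootstrap hypothesis $\|u(s)\|_{L^\rho} \lesssim s^{-\frac{2}{3}(1-2/\rho)}$ for $s < t$ and suitable $\rho$, combined with the conserved mass and the uniform Sobolev bound $\|u(s)\|_{H^m} \lesssim 1$ (with $m = 1$ when $k = 3$ and $m = 2$ when $k \geq 4$), Hölder's inequality estimates $\|u^{k+1}(s)\|$ in the relevant dual space, and the accumulated $s$-decay propagates through the integral.

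The principal obstacle is the derivative $\partial_x$ sitting on the nonlinearity: applying the dispersive estimate directly to $U(t-s)\partial_x(u^{k+1})$ would cost a full derivative that the decay rate cannot absorb. To circumvent this I would lean on the smoothing effect of the ZK group, the two-dimensional analogue of the Kato smoothing effect, to convert the lost derivative into integrability in time through a mixed $L^\infty_x L^2_t$-type norm, equivalently integrating the derivative by parts onto the kernel and using that $\partial_x U(t)$ obeys a comparable, only slightly weaker, pointwise bound. The exponent $k+1$ is precisely what makes the time integral converge: each factor of $u$ beyond the two needed to balance the mass and dual exponents supplies extra $s$-decay, and the thresholds $k \geq 3$ (respectively $k \geq 4$) are exactly those for which the total decay surpasses the integrability threshold at the critical endpoint. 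A standard continuity argument then upgrades the bootstrap hypothesis to the stated conclusion, the smallness of $\|u_0\|_{H^m}$ guaranteeing that the nonlinear contribution stays subordinate to the linear decay and that the constant $C$ is finite.
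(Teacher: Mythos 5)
Your skeleton --- linear dispersive bound, Duhamel formula, splitting at $s=t/2$, a bootstrap norm, smallness --- coincides with the paper's, but the proposal misses the one genuinely nontrivial point, and the places where you stay vague are exactly where it bites. After pulling out the dispersive factor and one factor $\|u(s)\|_{L^r_{xy}}\lesssim \|u\|_{X(T)}|s|^{-\frac{2}{3}(1-\frac{2}{r})}$ from the bootstrap, you must integrate the weight $|s|^{-\frac{2}{3}(1-\frac{2}{r})}$ against quantities like $\|\partial_x u\|_{L^2_{xy}}\|u\|^{k-1}_{L^{\frac{2(k-1)r}{r-4}}_{xy}}$ over $[0,t/2]$. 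This weight is \emph{not} in $L^{\frac{3r}{2(r-2)}}_s$, only in the weak space $L^{\frac{3r}{2(r-2)},\infty}_s$, so pairing it with ordinary Lebesgue-space Strichartz bounds (the ``gZK analogues of the Strichartz and smoothing estimates'' you invoke) fails by a logarithmic divergence at the endpoint; one cannot reach the sharp rate this way. The paper's proof is organized precisely around this obstruction: it proves Lorentz-refined Strichartz estimates for $U(t)$ (Lemma \ref{2dZK-PrelStrichartz}), upgrades them to global-in-time bounds for the nonlinear solution in mixed Lorentz norms such as $\|u\|_{L^{\frac{6r}{r+4},2}_{t}L^{\frac{4r}{r-4}}_{xy}}$ (Lemma \ref{2dZK-LorStri-sol}, whose proof for $k\geq4$ is where the $H^2$ hypothesis, Sobolev embedding and the Kato--Ponce commutator estimate enter), and then closes the time integral by H\"older in Lorentz spaces: two factors of $u$ with second index $2$ multiply to an $L^{\cdot,1}_t$ function, which pairs with the weak-type weight to give $L^1_s$. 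Nothing in your proposal supplies this mechanism or a substitute for it.

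Two further points. First, your late-interval strategy --- exploiting the accumulated decay of $u$ itself on $[t/2,t]$ --- does not close under the stated hypotheses for general $r$: with the bootstrap available only at exponent $r$ (the only exponent for which $u_0\in L^{r'}$ is assumed), the estimate it yields is $\int_{t/2}^t|t-s|^{-a}s^{-a}\,ds\sim t^{1-2a}$ with $a=\frac{2}{3}(1-\frac{2}{r})<1$, which is strictly worse than $t^{-a}$; harvesting additional decay from the remaining factors would require decay estimates at exponents $\rho>r$, hence data in $L^{\rho'}$, which is not assumed, whenever $r<2k+2$. The paper avoids this entirely by treating $[t/2,t]$ with the same Lorentz--H\"older argument as $[0,t/2]$, simply placing the weak-type weight on $|t-s|$ rather than $|s|$. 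Second, the derivative in $\partial_x(u^{k+1})$ is not an obstacle here and needs no smoothing effect: since $u_0\in H^1$ ($k=3$) or $H^2$ ($k\geq4$) and the solution stays bounded in that norm, one writes $\partial_x u^{k+1}=(k+1)u^k\partial_x u$ and puts $\partial_x u$ in $L^\infty_tL^2_{xy}$. Your proposed detour through $L^\infty_xL^2_t$-type smoothing norms misdiagnoses the difficulty --- that concern is the right one for Theorems \ref{MainResult1} and \ref{MainResult2}, where the data is only $H^{1/2}$, but not for this theorem.
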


\begin{theorem} \label{MainResult4}
Let $4< r \leq \infty$, $\frac{1}{r}+\frac{1}{r'}=1$, $d=3$ and  $k=4$. Assume that $u(t)$ is the global strong solution to the 3D energy-critical gZK equation \eqref{gZK} with small initial data 
$u_0\in H^{2}\cap L^{r'}$ and  $\|u_0\|_{H^{2}}\ll1$,
 then there exists a constant $C=C(\|u_0\|_{H^{2}})$, such that 
\begin{align}
\| u(t,x,\mathbf{y})  \|_{L^{r}_{x\mathbf{y}}(\mathbb{R}^3)} \leq C(\|u_0\|_{H^{2}}) |t|^{-(1-\frac{2}{r})}\|u_0\|_{L^{r'}_{x\mathbf{y}}(\mathbb{R}^3)}. \label{MainResult4a}
	\end{align}
\end{theorem}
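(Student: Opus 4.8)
The plan is to combine the Duhamel formula with the linear dispersive estimate for the Zakharov--Kuznetsov group $S(t)=e^{-t\partial_x\Delta}$ and a continuity (bootstrap) argument, feeding the desired decay back into the nonlinear term through the global spacetime bounds that accompany the small-data $H^2$ theory. I would first record the linear estimate. Since $S(t)$ has Fourier symbol $e^{it\xi_1|\xi|^2}$, stationary phase applied to the homogeneous degree-three phase $\xi_1|\xi|^2$ (after a dyadic frequency decomposition to handle its degeneracy at the origin) yields the endpoint bound $\|S(t)f\|_{L^\infty(\mathbb{R}^3)}\lesssim|t|^{-1}\|f\|_{L^1(\mathbb{R}^3)}$; interpolating with the unitarity $\|S(t)f\|_{L^2}=\|f\|_{L^2}$ gives
\begin{align*}
\|S(t)f\|_{L^r(\mathbb{R}^3)}\lesssim|t|^{-(1-\frac{2}{r})}\|f\|_{L^{r'}(\mathbb{R}^3)},\qquad 2\le r\le\infty,
\end{align*}
which is precisely the rate in \eqref{MainResult4a}. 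This already controls the free part of the solution.

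With $F(u)=\mp u^4\partial_x u=\mp\tfrac15\partial_x(u^5)$, the Duhamel formula reads $u(t)=S(t)u_0+\int_0^t S(t-s)F(u)(s)\,ds$, and I would run a bootstrap on
\begin{align*}
M(T)=\sup_{0<t\le T}|t|^{1-\frac2r}\,\frac{\|u(t)\|_{L^r}}{\|u_0\|_{L^{r'}}}.
\end{align*}
The free term contributes $O(1)$. For the Duhamel term I keep the derivative on the nonlinearity and apply the displayed dispersive estimate to $S(t-s)\partial_x(u^5)$, so that the $L^{r'}$-norm of $u^4\partial_x u$ appears; by H\"older this is $\lesssim\|u(s)\|_{L^a}^4\|\partial_x u(s)\|_{L^6}$ for a suitable $a$. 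The factor $\|\partial_x u(s)\|_{L^6}$ is bounded uniformly in $s$ by the Sobolev embedding $H^1\hookrightarrow L^6$ together with the global-in-time $H^2$ bound for $u$, and it is here that the smallness $\|u_0\|_{H^2}\ll1$ enters. For the factors $\|u(s)\|_{L^a}$ I interpolate, $\|u(s)\|_{L^a}\le\|u(s)\|_{L^2}^{\theta}\|u(s)\|_{L^r}^{1-\theta}$, and use mass conservation $\|u(s)\|_{L^2}=\|u_0\|_{L^2}$ with the bootstrap hypothesis to obtain $\|u(s)\|_{L^a}\lesssim s^{-(1-\theta)(1-\frac2r)}$. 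Splitting the $s$-integral at $s=t/2$, the temporal convolution $\int_0^t|t-s|^{-(1-\frac2r)}s^{-\beta}\,ds$ reproduces the rate $|t|^{-(1-\frac2r)}$ whenever the nonlinearity decays with exponent $\beta\ge1$, and I would check that the four interpolated factors indeed supply $\beta>1$ (a computation which, at $3$D energy-critical scaling, forces precisely the $H^2$ regularity). Closing the bootstrap then reduces to an inequality of the form $M(T)\lesssim1+\|u_0\|_{H^2}^{\gamma}M(T)^{m}$ with $\gamma>0$, whence $M(T)\lesssim1$ uniformly in $T$ by smallness, which is \eqref{MainResult4a}.

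The main obstacle is the energy-critical derivative nonlinearity $\partial_x(u^5)$: at $\dot H^1$-critical scaling there is no slack, so every H\"older/Sobolev exponent must be tuned so that exactly one power of $\|u_0\|_{L^{r'}}$ is produced (to match the right-hand side) while the remaining factors are absorbed into $\|u_0\|_{L^2}$ and the small $H^2$ norm; balancing this bookkeeping is the crux. A secondary difficulty is the endpoint $r=\infty$, where the temporal convolution is only logarithmically divergent near $s=t$: there I would instead move one derivative onto the propagator and exploit that $S(t-s)\partial_x$ gains an extra $|t-s|^{-1/3}$ (by the same scaling that yields the linear estimate), or equivalently invoke the inhomogeneous local-smoothing Strichartz bounds from the global theory to treat the near-diagonal contribution without pointwise-in-time loss.
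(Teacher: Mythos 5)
Your linear estimate and the outer skeleton (Duhamel, splitting at $s=t/2$, bootstrap on a weighted sup norm) match the paper, but the way you estimate the Duhamel integrand has a genuine gap, and it is precisely the point the paper's proof is built to avoid. If you convert all four factors via $\|u(s)\|_{L^a}\le\|u(s)\|_{L^2}^{\theta}\|u(s)\|_{L^r}^{1-\theta}\lesssim s^{-(1-\theta)(1-\frac2r)}$, then (in the range where this interpolation is even legal, see below) one computes $\theta=\frac{5(r-6)}{12(r-2)}$, hence $\beta=4(1-\theta)\bigl(1-\tfrac2r\bigr)=\tfrac73+\tfrac2r>1$ for every admissible $r$, and consequently
\begin{equation*}
\int_0^{t/2}|t-s|^{-(1-\frac2r)}\,s^{-\beta}\,ds=\infty ,
\end{equation*}
because $s^{-\beta}$ is not integrable at $s=0$. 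Your criterion ``the convolution reproduces the rate whenever $\beta\ge1$'' is backwards: $\beta\ge1$ is what you need for the tail, but $\beta<1$ is what you need for convergence at the origin, and no $\beta$ does both. You can of course cap the integrand near $s=0$ by the uniform $H^2$ bound, but then that portion of the Duhamel term is no longer proportional to $\|u_0\|_{L^{r'}}$; combined with the superlinear bootstrap power $M^{m}$, $m=4(1-\theta)>1$, whose coefficient carries positive powers of $\|u_0\|_{L^{r'}}$ (not assumed small), you only close under a smallness condition coupling $\|u_0\|_{H^2}$ to $\|u_0\|_{L^{r'}}$ — strictly weaker than the theorem, where $C=C(\|u_0\|_{H^2})$ and $\|u_0\|_{L^{r'}}$ is arbitrary. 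The paper's proof extracts exactly \emph{one} factor $\|u(s)\|_{L^r}\le\|u\|_{X(T)}s^{-(1-\frac2r)}$, whose weight lies in weak $L^{\frac{r}{r-2}}_s$ globally (including near $s=0$), and places the remaining three factors in the global Lorentz--Strichartz norm $\|Ju\|_{L^{\frac{3r}{2},2}_tL^{\frac{6r}{3r-4}}_{x\mathbf{y}}}\le C(\|u_0\|_{H^2})$ established in \eqref{3DgZKmrProof6}; H\"older in Lorentz spaces then renders the time integral finite and the whole nonlinear contribution \emph{linear} in $\|u\|_{X(T)}$ with a small coefficient, which closes by direct absorption. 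This is why the Lorentz refinement of Strichartz is introduced at all, and your scheme has no substitute for it.

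Two further concrete problems. First, your H\"older/interpolation bookkeeping cannot be carried out for $4<r<6$: pairing the derivative as $\|\partial_xu\|_{L^p}$ (with $p\le6$, the best $H^2(\mathbb{R}^3)$ allows) forces the four $u$-factors into $L^a$ with $\frac4a=1-\frac1r-\frac1p$, and the requirement $a\le r$ for interpolation against mass conservation reads $\frac1p\le1-\frac5r$, which fails for every $p\le 6$ when $r<6$ (for your choice $p=6$, $a=\frac{24r}{5r-6}>r$ exactly when $r<6$). Second, your endpoint fix at $r=\infty$ — moving a derivative onto the propagator ``to gain an extra $|t-s|^{-1/3}$'' — goes the wrong way: additional time decay means a \emph{more} singular kernel near $s=t$, so the near-diagonal divergence worsens; moreover no derivative-gaining dispersive estimate for ZK is available in $d=3$ in this paper (Lemma \ref{hBOZK-PrelDisDecay} and \eqref{hBOZK-PrelDisDecay002} carry no $D_x$ gain, unlike the 2D Lemma \ref{2dZK-PrelDisDecay}). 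The near-diagonal region $[t/2,t]$ is handled in the paper by the same Lorentz-space H\"older argument, see \eqref{3DgZKmrProof8}, not by trading derivatives for decay.
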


\begin{theorem} \label{MainResult5}
Let $d=4$ and  $k=3$. Assume that $u(t)$ is the global strong solution to the 4D energy-subcritical gZK equation \eqref{gZK} with small initial data 
$u_0\in H^{2}_{x\mathbf{y}}\cap (-\Delta)^{-\frac{1}{2}}L^{1}_{x\mathbf{y}}\cap L^{\frac{6}{5}}_{\mathbf{y}}L^2_{x}$ and  $\|u_0\|_{H^{2}_{x\mathbf{y}}(\mathbb{R}^4)}\ll1$. Then there exists a constant $C=C(\|u_0\|_{H^{2}_{x\mathbf{y}}})$  such that 
\begin{align}
\| u(t,x,\mathbf{y})  \|_{L^{\infty}_{x\mathbf{y}}(\mathbb{R}^4)} \leq C(\|u_0\|_{H^{2}_{x\mathbf{y}}}) |t|^{-1}\left(1+\|(-\Delta)^{\frac{1}{2}}u_0\|_{L^{1}_{x\mathbf{y}}(\mathbb{R}^4)}\right),\label{MainResult5AA}
	\end{align}
and
\begin{align}
\| \partial_x u(t,x,\mathbf{y}) \|_{L^{6}_{\mathbf{y}}L^2_{x}(\mathbb{R}^4)} \lesssim |t|^{-1}. \label{MainResult5BB}
	\end{align}
\end{theorem}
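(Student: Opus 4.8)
The plan is to run a Duhamel argument anchored on two linear estimates for the Zakharov--Kuznetsov group $e^{-t\partial_x\Delta}$ on $\mathbb{R}^4=\mathbb{R}_x\times\mathbb{R}^3_{\mathbf y}$ and on the global spacetime bounds coming from the small--data theory for \eqref{gZK}. Writing the $k=3$ nonlinearity as $u^3\partial_x u=\tfrac14\partial_x(u^4)$, Duhamel's formula reads
\begin{align}
u(t)=e^{-t\partial_x\Delta}u_0-\frac14\int_0^t e^{-(t-s)\partial_x\Delta}\,\partial_x\!\big(u^4\big)(s)\,ds.\nonumber
\end{align}
The two inputs I would isolate are: (i) the pointwise bound $\|e^{-t\partial_x\Delta}f\|_{L^\infty_{x\mathbf y}}\lesssim|t|^{-1}\|(-\Delta)^{1/2}f\|_{L^1_{x\mathbf y}}$, which is exactly what dictates both the rate $|t|^{-1}$ and the hypothesis $u_0\in(-\Delta)^{-1/2}L^1$; and (ii) the smoothing/dispersive bound $\|\partial_x e^{-t\partial_x\Delta}f\|_{L^6_{\mathbf y}L^2_x}\lesssim|t|^{-1}\|f\|_{L^{6/5}_{\mathbf y}L^2_x}$, in which the multiplier $\partial_x$ supplies the factor $|\xi|$ that cancels the low--frequency singularity $|\xi|^{-1}$ of the three--dimensional transverse dispersion $L^{6/5}_{\mathbf y}\!\to\!L^6_{\mathbf y}$, leaving the clean rate $|t|^{-1}$; this is the origin of the norm $L^{6/5}_{\mathbf y}L^2_x$ in the hypotheses. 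Both (i) and (ii) would be proved first by stationary phase and a $TT^*$ argument, and I take them for granted below.

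I would then establish \eqref{MainResult5AA} and \eqref{MainResult5BB} simultaneously, as a coupled continuity argument. Set
\begin{align}
A(T)=\sup_{0<t\le T}|t|\,\|u(t)\|_{L^\infty_{x\mathbf y}},\qquad B(T)=\sup_{0<t\le T}|t|\,\|\partial_x u(t)\|_{L^6_{\mathbf y}L^2_x},\nonumber
\end{align}
which are finite by the uniform $H^2$--control and Sobolev embedding. The linear terms in the Duhamel formula are bounded directly by (i) and (ii). For the nonlinear term I split the integral at $s=t/2$. On $s\in[0,t/2]$ one has $|t-s|\sim|t|$, so (i) and (ii) are applied with the full decay $|t|^{-1}$ and it remains to integrate $\|(-\Delta)^{1/2}\partial_x(u^4)\|_{L^1}$ and $\|\partial_x(u^4)\|_{L^{6/5}_{\mathbf y}L^2_x}$ over $s\in[0,t/2]$; these $s$--integrals are made finite, uniformly in $t$, by distributing the derivatives over the four factors and invoking the global Strichartz and maximal-function bounds together with the uniform $H^2$--bound, so this region contributes a constant multiple of $|t|^{-1}$.

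The genuine difficulty is the near--diagonal region $s\in[t/2,t]$, where $t-s$ is small and the propagator provides no decay: the whole factor $|t|^{-1}$ must be extracted from the factors $u(s),\partial_x u(s)$ (whose decay is recorded by $A(T),B(T)$, with $s\sim t$), while the derivative sitting on $\partial_x(u^4)$ must be absorbed \emph{without} spending regularity, since only $H^2$ is available. Estimating this piece by Sobolev embedding alone would demand control of $u$ in $H^{3}$-type norms and fails; instead I would invoke the retarded local smoothing (Kato) estimate for the ZK group, which recovers the $\partial_x$ in a spacetime--averaged sense, and pair it with Hölder, peeling off one copy of $\partial_x u$ in $L^6_{\mathbf y}L^2_x$ (carrying the decay $s^{-1}B(T)$) and placing the remaining copies of $u$ in the norms controlled by $s^{-1}A(T)$ and by the uniform $H^2$--bound. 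This is the technical heart of the proof, and the reason the stated norms appear; it is also where the degeneracy of the four--dimensional ZK phase $\xi^3+\xi|\eta|^2$ (whose Hessian determinant vanishes on a hypersurface) is felt, as it is precisely this degeneracy that forces the derivative loss in (i) and pins the rate at $|t|^{-1}$ rather than the formal $|t|^{-4/3}$.

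Collecting the three regions yields a coupled system of the schematic form $A(T)+B(T)\le C_0\big(1+\|(-\Delta)^{1/2}u_0\|_{L^1}\big)+C\,\|u_0\|_{H^2}\,\big(A(T)+B(T)\big)$, where the linear and far--diagonal contributions produce the constant term and the near--diagonal contribution produces the multiple of $A(T)+B(T)$ with a prefactor proportional to the small $H^2$--norm. Choosing $\|u_0\|_{H^2}\ll1$ absorbs the last term into the left side, giving $A(T)+B(T)\lesssim 1+\|(-\Delta)^{1/2}u_0\|_{L^1}$ uniformly in $T$; letting $T\to\infty$ yields \eqref{MainResult5AA} and \eqref{MainResult5BB}. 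The behaviour near $t=0$, where the weight $|t|$ is harmless, is covered by the same $H^2$--bound and local theory, so only the regime $|t|\to\infty$ carries content.
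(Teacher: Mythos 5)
Your linear inputs (i) and (ii) are exactly the paper's estimates (\eqref{hBOZK-PrelDisDecay002} with $d=4$, $r=\infty$, and Lemma \ref{4hZK-PrelDisDecay} with $r=6$), and your splitting at $s=t/2$, with global spacetime bounds disposing of $[0,t/2]$, matches the paper's scheme. The genuine gap lies in the near-diagonal region $[t/2,t]$ for the $L^\infty$ bound, which you correctly single out as the heart of the proof but resolve by a mechanism that cannot work. A retarded local smoothing (Kato-type) estimate cannot be used \emph{instead of} Sobolev embedding: all such estimates for $U(t)$ output $L^2$-based norms (a fixed-time $L^2_{x\mathbf{y}}$ bound, or mixed norms with $L^2$ in time), never a fixed-time $L^\infty_{x\mathbf{y}}$ bound, so an embedding step is unavoidable. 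The paper's actual resolution uses \emph{both}: Sobolev embedding costs $J^2$, the derivative $\partial_x$ in the nonlinearity is split as $D_x^{1/2}\cdot D_x^{1/2}$ with one half absorbed by the $D_x^{1/2}$ gain of the retarded anisotropic Strichartz estimate \eqref{hZK-Stri2}, so that the quantity to control is $\big\|\mathds{1}_{s\in[t/2,t]}D_x^{1/2}J^2(u^4)\big\|_{L^2_sL^{6/5}_{\mathbf{y}}L^2_x}$ --- exactly $2.5$ derivatives, which is compatible with $H^2$ data only because the homogeneous estimate \eqref{hZK-Stri1} also gains $D_x^{1/2}$, yielding the global bound $\|D_x^{1/2}J^2u\|_{L^2_tL^6_{\mathbf{y}}L^2_x}\leq C(\|u_0\|_{H^2})$ of \eqref{MainResult5b3}. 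This auxiliary bootstrap is entirely absent from your sketch, and without it the derivative count does not close.

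Second, your decay accounting in this region is not realizable: in the Hölder splitting of $L^{6/5}_{\mathbf{y}}L^2_x$ exactly one factor can sit in an $L^2_x$-based norm, and that slot must be taken by the factor carrying the $2.5$ derivatives, $D_x^{1/2}J^2u$, which at $H^2$ regularity is only controllable in the global, \emph{non-decaying} norm above --- not pointwise in time. Hence the factor $|t|^{-1}$ must be extracted from an undifferentiated copy of $u$ in $L^\infty_{x\mathbf{y}}$, i.e.\ from your $A(T)$, and cannot come from $\partial_xu$ in $L^6_{\mathbf{y}}L^2_x$ via $B(T)$; any Leibniz distribution placing the decay on $\partial_xu$ forces $H^{5/2}$-type control of another factor, which is unavailable. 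Consequently the coupled $A$--$B$ system never materializes: the paper closes the bootstrap for $A(T)=\|u\|_{X(T)}$ alone, obtaining \eqref{MainResult5AA}, and only afterwards derives \eqref{MainResult5BB}, feeding \eqref{MainResult5AA} and \eqref{MainResult5b3} back in. A further (smaller) flaw: your claim that $A(T)$ is a priori finite ``by the uniform $H^2$-control and Sobolev embedding'' fails in four dimensions, since $H^2(\mathbb{R}^4)\not\hookrightarrow L^\infty(\mathbb{R}^4)$.
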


{\bf{Notations.}}  We say $A\lesssim B$ if $A\leq C \cdot B$ for an absolute constant $C>0$. $A\ll B$ means that  $A<C \cdot B$ for a very large positive constant $C$. Our conventions for the Fourier transform are
\begin{equation}
\widehat{f}(\xi)=\frac{1}{\sqrt{2\pi}}\int f(x)e^{-ix\xi}dx,\hspace{5mm}
\big(\mathscr F^{-1}f\big)(x)=\frac{1}{\sqrt{2\pi}}\int f(x)e^{ix\xi}dx. \nonumber
\end{equation}

Let $N\in 2^{\mathbb{N}}$  and $\chi: \mathbb{R} \to \mathbb{R}$ be a Schwartz function supported in  $[\frac{1}{2}, 2]$. Set $\chi_1(\cdot)=1-\sum_{N\in 2^{\mathbb{N}}}\chi(N^{-1}\cdot)$.  Define the  inhomogeneous Littlewood-Paley projection operator $P_N$ by 
$$P_1f:=\mathscr{F}^{-1}\left(\chi_1(|\xi|)  \widehat{f}(\xi) \right) , \hspace{5mm} P_Nf:=\mathscr{F}^{-1}\left(\chi(N^{-1}|\xi|)  \widehat{f}(\xi) \right). $$

For $s>0$, we define $D^sf$ and $J^sf$ as
$$\widehat{D^s}f(\xi)=|\xi|^{s}\widehat{f}(\xi)\hspace{4mm} and \hspace{4mm} \widehat{J^s}f(\xi)=(1+|\xi|^2)^{s/2}\widehat{f}(\xi).$$

{\bf{Organization of the paper.}} The rest of the article is organized as follows. In Section 2, we include some basic estimates that will be used frequently later. In Section 3, we show dispersive estimates for solutions to the mass-critical gKdV and  gKdV equation with $k\geq 6$. The proof of Theorem \ref{MainResult1} is more complex than that of Theorem \ref{MainResult2}. With the help of Lorents-Strichartz estimates, we show dispersive estimates for solutions to two dimensional gZK equations in Section 4. In Section 5, we give the proof of Theorem \ref{MainResult4} and Theorem \ref{MainResult5}.

\section{Preliminaries}\label{section:linear estimate}

In this section, we  recall Strichartz estimates, Kato's local smoothing estimates and the fractional Leibniz rules which will be useful to handle the nonlinear term with derivative.
\subsection{Strichartz estimates and Kato smoothing estimates for KdV} 
The standard dispersive estimates and Strichartz estimates for the free KdV operator $e^{-t\partial^3_x}$ read as follows. 
\begin{lemma}[Dispersive estimates, \cite{KPV89}] \label{PrelDisDecay}
We have
	\begin{align}
		\left\|D_x^{\theta \alpha}e^{-t\partial^3_x}u_0\right\|_{L^r} \lesssim  |t|^{-\frac{\alpha+1}{3}\theta } \left\|u_0\right\|_{L^{r'}} ,\label{PrelDisDecay1}
	\end{align}
where $\theta=\frac{r-2}{r}$, $2\leq r \leq \infty$ and $0\leq \alpha \leq 1/2$. In particular, by taking $\alpha=1/2$, we deduce
	\begin{align}
		\left\|D_x^{\frac{1}{2}-\frac{1}{r}}e^{-t\partial^3_x}u_0\right\|_{L^r} \lesssim  |t|^{-(\frac{1}{2}-\frac{1}{r})} \left\|u_0\right\|_{L^{r'}} ,\label{PrelDisDecay2}
	\end{align}
\end{lemma}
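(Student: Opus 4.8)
The plan is to obtain \eqref{PrelDisDecay1} by interpolating between two endpoint bounds: the $L^2$ isometry at $r=2$ (where $\theta=0$) and a single dispersive kernel estimate at $r=\infty$ (where $\theta=1$). Since the exponent $\theta\alpha$ of $D_x$ varies with $r$, the interpolation must be carried out through an analytic family of operators via Stein's interpolation theorem rather than a single Riesz--Thorin step.

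First I would record the two endpoints. For $r=2$ we have $\theta=0$, and since the Fourier multiplier of $e^{-t\partial_x^3}$ is $e^{it\xi^3}$, Plancherel gives $\|e^{-t\partial_x^3}u_0\|_{L^2}=\|u_0\|_{L^2}$, which is exactly \eqref{PrelDisDecay1} with $|t|^{0}$. The substance of the lemma is the $r=\infty$ endpoint
$$\bigl\|D_x^{\alpha}e^{-t\partial_x^3}u_0\bigr\|_{L^\infty}\lesssim |t|^{-\frac{\alpha+1}{3}}\|u_0\|_{L^1},$$
which I would reduce, via Young's inequality, to a pointwise bound on the convolution kernel
$$K_t^\alpha(x)=\frac{1}{2\pi}\int_{\mathbb R}|\xi|^{\alpha}e^{i(t\xi^3+x\xi)}\,d\xi.$$
Rescaling $\xi\mapsto|t|^{-1/3}\xi$ extracts exactly the claimed factor $|t|^{-\frac{\alpha+1}{3}}$ and reduces everything to the uniform-in-$y$ bound $\bigl|\int_{\mathbb R}|\xi|^{\alpha}e^{i(\pm\xi^3+y\xi)}\,d\xi\bigr|\lesssim 1$, where $y=x|t|^{-1/3}$.

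The core of the argument is this oscillatory integral estimate, and it is where I expect the main difficulty to lie. The phase $\psi(\xi)=\pm\xi^3+y\xi$ has $\psi'''\equiv\pm6$ bounded away from zero, so away from its stationary points $\xi_0$ (where $\psi'(\xi_0)=\pm3\xi_0^2+y=0$, so $|\xi_0|\sim|y|^{1/2}$) van der Corput's lemma with the first and second derivatives controls the contribution after a dyadic decomposition in $\xi$. Near $\xi_0$ the second derivative satisfies $|\psi''(\xi_0)|=6|\xi_0|\sim|y|^{1/2}$, so the stationary-phase contribution is of size $|\xi_0|^{\alpha}|\psi''(\xi_0)|^{-1/2}\sim|y|^{\alpha/2-1/4}$; this stays bounded as $|y|\to\infty$ precisely when $\alpha\le\tfrac12$, which is exactly the hypothesis $0\le\alpha\le1/2$. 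Handling the weight $|\xi|^{\alpha}$ near $\xi=0$ and gluing the dyadic pieces together is the routine but delicate remaining part.

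Finally I would run Stein's analytic interpolation theorem on the family $T_z=D_x^{z\alpha}e^{-t\partial_x^3}$ in the strip $0\le\Re z\le1$. On $\Re z=0$ the multiplier $|\xi|^{is\alpha}$ has modulus one, so $T_{is}$ is bounded on $L^2$ with norm independent of $s$; on $\Re z=1$ the extra oscillatory factor $|\xi|^{is\alpha}$ perturbs the phase in the kernel integral above and enlarges the van der Corput constants by at most a polynomial in $s$, so the $L^1\to L^\infty$ bound persists with admissible growth. Interpolating at $\Re z=\theta$ yields $T_\theta=D_x^{\theta\alpha}e^{-t\partial_x^3}$ as a bounded map $L^{r'}\to L^r$ with $\tfrac1r=\tfrac{1-\theta}{2}$, i.e. $\theta=\tfrac{r-2}{r}$, and the operator norms interpolate as $1^{\,1-\theta}\cdot\bigl(C|t|^{-\frac{\alpha+1}{3}}\bigr)^{\theta}=C^{\theta}|t|^{-\frac{\alpha+1}{3}\theta}$, giving \eqref{PrelDisDecay1}. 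The special case \eqref{PrelDisDecay2} is then just $\alpha=1/2$.
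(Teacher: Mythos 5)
Your proposal is correct and is essentially the argument of Kenig--Ponce--Vega \cite{KPV89}, which is precisely the source this paper cites for the lemma without reproducing a proof: rescaling the kernel to extract the factor $|t|^{-\frac{\alpha+1}{3}}$, a van der Corput/stationary-phase bound for $\int|\xi|^{\alpha}e^{i(\pm\xi^3+y\xi)}\,d\xi$ in which the restriction $\alpha\le\tfrac12$ arises exactly as you say from the stationary point at $|\xi_0|\sim|y|^{1/2}$, and then Stein's analytic interpolation on $T_z=D_x^{z\alpha}e^{-t\partial_x^3}$ with admissible (polynomial) growth in the imaginary parameter on $\Re z=1$. No gap to report; your identification of where the hypothesis $0\le\alpha\le\tfrac12$ enters is the key point of the proof.
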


\begin{lemma}[Strichartz estimates, \cite{KPV89}]\label{PrelStrichartz}
	Let $\theta\in [0, 1]$, $ \alpha \in [0, 1/2]$, $(q,p)=(\frac{6}{\theta(\alpha+1)}, \frac{2}{1-\theta})$  and $\frac{1}{p}+\frac{1}{p'}=\frac{1}{q}+\frac{1}{q'}=1$. Then
	\begin{align}
		\left\|D_x^{\frac{\theta\alpha}{2}}e^{-t\partial^3_x}u_0\right\|_{L_t^qL^p_{x}} &\lesssim
		\|u_0\|_{L^2_{x}}, \label{Stri1} \\
	\left\|\int_{-\infty}^{\infty} D_x^{\theta\alpha}e^{-(t-s)\partial^3_x}g(s, \cdot)ds\right\|_{L_t^qL^p_{x}} &\lesssim
		\|g\|_{L_t^{q'}L^{p'}_{x}}. \label{Stri2}
	\end{align}
\end{lemma}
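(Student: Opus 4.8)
The plan is to derive both estimates from the pointwise dispersive bound of Lemma~\ref{PrelDisDecay} together with the one-dimensional Hardy--Littlewood--Sobolev (HLS) inequality and the abstract $TT^*$ principle, after disposing of the degenerate endpoint $\theta=0$ separately. When $\theta=0$ one has $q=\infty$, $p=2$ and no derivative, so \eqref{Stri1} and \eqref{Stri2} both collapse to the $L^2_x$-isometry $\|e^{-t\partial_x^3}u_0\|_{L^2_x}=\|u_0\|_{L^2_x}$, which is immediate from Plancherel since $e^{-t\partial_x^3}$ has unimodular Fourier symbol. From here on I would assume $\theta\in(0,1]$.

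The core step is to establish the inhomogeneous bound \eqref{Stri2} directly. Applying Minkowski's integral inequality to bring the $L^p_x$-norm inside the $s$-integration, and then invoking the dispersive estimate \eqref{PrelDisDecay1} pointwise in $s$ (with the \emph{full} derivative $D_x^{\theta\alpha}$ and decay rate $\tfrac{\alpha+1}{3}\theta$), I obtain
\[
\Bigl\| \int_{-\infty}^{\infty} D_x^{\theta\alpha} e^{-(t-s)\partial_x^3} g(s,\cdot)\,ds \Bigr\|_{L^p_x} \lesssim \int_{-\infty}^{\infty} |t-s|^{-\frac{(\alpha+1)\theta}{3}} \,\|g(s,\cdot)\|_{L^{p'}_x}\,ds .
\]
The right-hand side is the convolution in $t$ of the scalar function $s\mapsto\|g(s,\cdot)\|_{L^{p'}_x}$ with the kernel $|t|^{-\frac{(\alpha+1)\theta}{3}}$. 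Taking the $L^q_t$-norm and applying the one-dimensional HLS inequality then yields \eqref{Stri2}, provided the exponents are admissible.

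The first thing I would therefore verify is the exponent bookkeeping. Convolution with $|t|^{-\lambda}$ on $\mathbb{R}$ maps $L^{q'}_t\to L^q_t$ precisely when $\tfrac{1}{q'}+\lambda=1+\tfrac1q$, i.e. $\lambda=\tfrac2q$. Since the kernel exponent here is $\lambda=\tfrac{(\alpha+1)\theta}{3}$, this matching is exactly the stated relation $q=\tfrac{6}{\theta(\alpha+1)}$; moreover for $\theta\in(0,1]$ and $\alpha\in[0,\tfrac12]$ one has $0<\lambda\le\tfrac12<1$ and $q\ge 4>2$, so HLS genuinely applies on the full admissible range. This confirms \eqref{Stri2}.

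Finally I would recover the homogeneous estimate \eqref{Stri1} by the $TT^*$ argument. Writing $Tu_0:=D_x^{\theta\alpha/2}e^{-t\partial_x^3}u_0$ and using that $e^{-t\partial_x^3}$ is unitary on $L^2_x$ with adjoint $e^{t\partial_x^3}$ while $D_x^{\theta\alpha/2}$ is self-adjoint, the composition $TT^*$ is precisely the operator $g\mapsto\int_{-\infty}^\infty D_x^{\theta\alpha}e^{-(t-s)\partial_x^3}g(s,\cdot)\,ds$ appearing on the left of \eqref{Stri2} (half a derivative coming from each factor). By the abstract $TT^*$ lemma, boundedness of $TT^*\colon L^{q'}_tL^{p'}_x\to L^q_tL^p_x$, which is exactly \eqref{Stri2}, is equivalent to boundedness of $T\colon L^2_x\to L^q_tL^p_x$, which is \eqref{Stri1}, and the proof is complete. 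I expect the only genuinely delicate point to be the exponent verification in the HLS step together with the companion check that the kernel power lies strictly in $(0,1)$, and confirming that the $TT^*$ factorization respects both the derivative count and the direction of the group flow; beyond this bookkeeping, every ingredient is furnished by Lemma~\ref{PrelDisDecay} and standard harmonic analysis.
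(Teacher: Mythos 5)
Your proof is correct and follows exactly the route the paper itself indicates: the lemma is quoted from Kenig--Ponce--Vega \cite{KPV89}, and the paper's introduction notes that these Strichartz estimates follow from the dispersive bounds of Lemma \ref{PrelDisDecay} via the Hardy--Littlewood--Sobolev inequality and a standard $TT^*$ argument, which is precisely your argument. Your exponent bookkeeping ($\lambda=\tfrac{\theta(\alpha+1)}{3}=\tfrac{2}{q}\le\tfrac12$, so HLS applies for all $\theta\in(0,1]$, $\alpha\in[0,\tfrac12]$), the treatment of the $\theta=0$ endpoint by unitarity, and the identification of $TT^*$ with the untruncated operator in \eqref{Stri2} are all accurate.
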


Kato smoothing effect can help to absorb derivative arising from the nonlinear term.
\begin{lemma}
	[Local smoothing estimates, \cite{KPV93}]\label{KatoSmootEff} We have
\begin{align}
\big\|\partial_x e^{-t\partial^3_x}u_0 \big\|_{L^2_{t}} &=c
		\|u_0 \|_{L^{2}_{x}},\label{KatoSmootEff1} \\
	\left\|\partial_x\int_0^t e^{s\partial^3_x}g( s, \cdot)\mathrm{d}s\right\|_{L_x^{2}} &\lesssim
		\|g\|_{L^1_xL^{2}_{t}},\label{KatoSmootEff2}\\
	\left\|\partial_x^{2}\int_0^t e^{-(t-s)\partial^3_x}g( s, \cdot)\mathrm{d}s\right\|_{L_x^{\infty}L^2_{t}} &\lesssim
		\|g\|_{L^1_xL^{2}_{t}}.\label{KatoSmootEff3}
	\end{align}
\end{lemma}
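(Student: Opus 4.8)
The plan is to prove the three estimates in order, deriving the two inhomogeneous bounds \eqref{KatoSmootEff2}--\eqref{KatoSmootEff3} from the sharp homogeneous smoothing identity \eqref{KatoSmootEff1} by duality and a $TT^\ast$-type composition. \textbf{Step 1 (the sharp identity).} I would first establish \eqref{KatoSmootEff1} by a direct Fourier computation. Writing
\[
\partial_x e^{-t\partial_x^3}u_0(x)=\frac{1}{\sqrt{2\pi}}\int_{\mathbb{R}} i\xi\, e^{i(x\xi+t\xi^3)}\widehat{u_0}(\xi)\,d\xi
\]
and fixing $x$, I view this as a function of $t$ alone. The key move is the change of variables $\eta=\xi^3$, for which $d\eta=3\xi^2\,d\xi$; the Jacobian weight $\xi^2$ is exactly the factor produced by the derivative $\partial_x$. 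Applying Plancherel in the $t$-variable then converts the weight $|\xi|^2$ into the constant $\tfrac13$ after substitution, so that $\|\partial_x e^{-t\partial_x^3}u_0\|_{L^2_t}^2=\tfrac13\|u_0\|_{L^2_x}^2$, with the right-hand side independent of $x$ because the only $x$-dependence sat in the modulus-one phase $e^{ix\xi}$. This yields \eqref{KatoSmootEff1} with $c=1/\sqrt3$ and, a fortiori, the $L^\infty_xL^2_t$ bound $\sup_x\|\partial_x e^{-t\partial_x^3}u_0\|_{L^2_t}\le c\|u_0\|_{L^2_x}$.

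\textbf{Step 2 (the dual/input estimate).} Estimate \eqref{KatoSmootEff2} I would obtain by dualizing Step 1. Regarding $u_0\mapsto \partial_x e^{-t\partial_x^3}u_0$ as a bounded operator from $L^2_x$ into $L^\infty_xL^2_t$, its adjoint sends $g\mapsto \int e^{s\partial_x^3}\partial_x g(s,\cdot)\,ds$, and the operator-norm bound of Step 1 transfers, since $L^1_xL^2_t$ embeds into the dual of $L^\infty_xL^2_t$, into $\|\partial_x\int e^{s\partial_x^3}g(s,\cdot)\,ds\|_{L^2_x}\lesssim\|g\|_{L^1_xL^2_t}$. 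Passing from this full-integral statement to the retarded integral $\int_0^t$ appearing in \eqref{KatoSmootEff2} is then routine, handled either by a direct kernel argument or by a Christ--Kiselev-type reduction, the time-integrability exponents being favorably ordered.

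\textbf{Step 3 (the double-smoothing Duhamel bound) and the main obstacle.} For \eqref{KatoSmootEff3} the idea is to factor the group as $e^{-(t-s)\partial_x^3}=e^{-t\partial_x^3}e^{s\partial_x^3}$ and to split the two derivatives as $\partial_x^2=\partial_x\cdot\partial_x$, applying the output smoothing of Step 1 in the $t$-variable and the input smoothing of Step 2 in the $s$-variable; composing the two one-derivative gains produces the full gain of two derivatives measured in $L^\infty_xL^2_t$. I expect the main obstacle to lie exactly here: unlike Steps 1--2, estimate \eqref{KatoSmootEff3} couples a maximal-in-$x$ (hence $L^\infty_x$) output with an $L^2_t$ time norm while simultaneously forcing two derivatives through a \emph{retarded} Duhamel integral, so one cannot simply multiply two non-retarded bounds and must either invoke a Christ--Kiselev reduction of $\int_0^t$ to the full integral or estimate the oscillatory kernel directly, controlling the $L^\infty_x$ norm uniformly in $x$. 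Verifying that the two smoothing effects genuinely \emph{compose}---rather than each contributing only half a derivative---is the delicate point, and it is precisely what makes \eqref{KatoSmootEff3} strong enough to absorb the derivative loss from the nonlinearity $\partial_x(u^5)$ later on.
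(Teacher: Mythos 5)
The paper itself gives no proof of this lemma --- it is quoted verbatim from Kenig--Ponce--Vega \cite{KPV93} --- so your proposal must be measured against the argument in that reference. Your Steps 1 and 2 are correct and match the source: the change of variables $\eta=\xi^3$ plus Plancherel in $t$ gives \eqref{KatoSmootEff1} with $c=3^{-1/2}$ (the Jacobian $3\xi^2$ exactly cancels the symbol $|\xi|^2$ of $\partial_x^2$), and \eqref{KatoSmootEff2} follows by duality from the $L^2_x\to L^\infty_xL^2_t$ bound together with the elementary observation that, for each fixed $t$, replacing $g$ by $\mathbf{1}_{[0,t]}(s)\,g$ does not increase the $L^1_xL^2_s$ norm; no Christ--Kiselev input is needed there.

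The gap is in Step 3, precisely at the point you yourself flag as delicate, and it is not closed. The composition of Step 1 with Step 2 proves \eqref{KatoSmootEff3} only for the untruncated integral $\int_{\mathbb{R}}$; for the retarded integral $\int_0^t$ the cutoff $\mathbf{1}_{\{s<t\}}$ couples $s$ and $t$ and destroys the factorization $e^{-(t-s)\partial_x^3}=e^{-t\partial_x^3}e^{s\partial_x^3}$. Of the two remedies you propose, the first is not available: the Christ--Kiselev lemma requires the input time exponent to be strictly smaller than the output time exponent, whereas here both are $L^2_t$ (the map is $L^1_xL^2_t\to L^\infty_xL^2_t$), and this equal-exponent endpoint is exactly where the lemma fails in general (sharp time truncation of a bounded $L^2_t$ multiplier need not stay bounded). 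The second remedy --- estimating the kernel directly --- is the correct one and is what \cite{KPV93} actually does, but it constitutes the entire content of \eqref{KatoSmootEff3} and your proposal does not carry it out. Concretely, by translation invariance and Minkowski one reduces to $g=\delta_{x_0}(x)h(t)$, so that for each fixed $x$ the retarded operator is a convolution in time whose multiplier is $m(\tau)=c\int_{\mathbb{R}}\frac{\xi^2\,e^{i(x-x_0)\xi}}{i(\tau-\xi^3)+0^{+}}\,d\xi$, and one must show $\sup_{x,x_0,\tau}|m(\tau)|<\infty$. This rests on the partial-fraction identity $\frac{\xi^2}{\xi^3-\tau}=\frac{1}{3}\sum_{j=1}^{3}\frac{1}{\xi-\omega_j}$, with $\omega_j$ the cube roots of $\tau$, together with the fact that the Fourier transform of $(\xi-\omega_j\mp i0)^{-1}$ is a bounded (indicator-times-exponential) function. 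Note that the numerator $\xi^2$ --- i.e.\ exactly two derivatives --- is what makes the numerators in the partial fractions constant; so the two-derivative gain is not obtained by ``composing two one-derivative gains'' as your plan suggests, but by this algebraic identity applied to the sharply truncated kernel. Without this (or an equivalent) computation, \eqref{KatoSmootEff3} remains unproved.
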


\begin{lemma}[Maximal function estimate, \cite{KPV93}]\label{gKdVMaxFun}
We have
\begin{align}
		\big\|e^{-t\partial^3_x} u_0\big\|_{L_{x}^{4} L_{t}^{\infty}} \lesssim  \|u_0\|_{\dot{H}^{\frac{1}{4}}_x}. \label{gKdVMaxFun1}
	\end{align}
\end{lemma}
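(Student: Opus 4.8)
The plan is to prove this scale-invariant maximal estimate by a Littlewood--Paley decomposition combined with a linearized $TT^{*}$ argument at a single frequency, the exponent $4$ and the gain $D^{1/4}$ both emerging from one-dimensional stationary phase. First I would record that the inequality is invariant under the KdV scaling $u_0\mapsto u_0(\lambda\,\cdot)$: the left-hand side scales like $\lambda^{-1/4}$ because $\sup_t|e^{-t\partial_x^3}(u_0(\lambda\cdot))(x)|=\big(\sup_s|e^{-s\partial_x^3}u_0|\big)(\lambda x)$, and $\|u_0(\lambda\cdot)\|_{\dot H^{1/4}}=\lambda^{-1/4}\|u_0\|_{\dot H^{1/4}}$. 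Decomposing $u_0=\sum_N f_N$ into dyadic annular pieces ($\widehat{f_N}$ supported in $|\xi|\sim N$), it therefore suffices to prove the single-scale bound $\|e^{-t\partial_x^3}f_1\|_{L^4_xL^\infty_t}\lesssim\|f_1\|_{L^2_x}$ at unit frequency and to recover $\|e^{-t\partial_x^3}f_N\|_{L^4_xL^\infty_t}\lesssim N^{1/4}\|f_N\|_{L^2_x}$ by scaling. I would stress at the outset why the supremum cannot be removed cheaply: by the sharp Kato smoothing \eqref{KatoSmootEff1}, $\|\partial_x e^{-t\partial_x^3}u_0(x,\cdot)\|_{L^2_t}$ is independent of $x$, so any attempt to dominate $\sup_t$ by an $L^2_t$ (or $H^s_t$, $s\ge0$) norm produces a quantity constant in $x$, which does not lie in $L^4_x(\mathbb R)$. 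The supremum must be handled by a genuine maximal argument that exploits dispersive spreading.

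For the unit-frequency estimate I would linearize the supremum: choose a measurable $t(x)$ and reduce matters to the bound $\|\mathcal T f\|_{L^4_x}\lesssim\|f\|_{L^2_x}$, uniform over all such $t(\cdot)$, where $\mathcal Tf(x)=\int e^{i(x\xi+t(x)\xi^3)}a(\xi)\widehat f(\xi)\,d\xi$ and $a$ is a smooth cutoff to $|\xi|\sim1$. By $TT^{*}$ this is equivalent to $\mathcal T\mathcal T^{*}:L^{4/3}_x\to L^4_x$ being bounded, and $\mathcal T\mathcal T^{*}$ is the integral operator with kernel
\[
K(x,y)=\int e^{\,i\,((x-y)\xi+(t(x)-t(y))\xi^3)}\,|a(\xi)|^2\,d\xi .
\]

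The heart of the matter is the kernel bound $|K(x,y)|\lesssim\min\!\big(1,|x-y|^{-1/2}\big)$, uniform over every measurable $t(\cdot)$. Writing $\Delta x=x-y$ and $\Delta t=t(x)-t(y)$, the phase $\psi(\xi)=\Delta x\,\xi+\Delta t\,\xi^3$ has $\psi''(\xi)=6\Delta t\,\xi$, which is $\sim|\Delta t|$ and non-vanishing on $|\xi|\sim1$; van der Corput's second-derivative test then gives $|K|\lesssim|\Delta t|^{-1/2}$, while in the non-stationary regime $|\Delta x|\gg|\Delta t|$ repeated integration by parts yields rapid decay $|K|\lesssim_M|\Delta x|^{-M}$. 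Stitching the two regimes produces $|K|\lesssim\min(1,|\Delta x|^{-1/2})$. Since $|\cdot|^{-1/2}$ is the weak-$L^2$ kernel in one dimension, the one-dimensional Hardy--Littlewood--Sobolev (equivalently weak-type Young) inequality gives precisely $\mathcal T\mathcal T^{*}:L^{4/3}_x\to L^4_x$; the exponent $4$ is forced by the $1/2$-decay in dimension one, which is exactly the point where the gain matches the $\dot H^{1/4}$ norm after scaling.

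Finally I would undo the scaling and sum the dyadic pieces. The single-scale bounds give $\|\sup_t|e^{-t\partial_x^3}f_N|\|_{L^4_x}\lesssim N^{1/4}\|f_N\|_{L^2_x}$, and summation to the sharp $\dot H^{1/4}$ norm is achieved through the Littlewood--Paley square function in $L^4_x$ followed by Minkowski's inequality in $\ell^2_N$; to interchange $\sup_t$ with the square function I would run the $TT^{*}$ argument in its $\ell^2_N$-valued form, for which the same kernel bound and a vector-valued Hardy--Littlewood--Sobolev inequality apply. The main obstacle is the kernel estimate of the previous paragraph: one must obtain the full $|x-y|^{-1/2}$ decay --- joining the van der Corput bound on the stationary region to the non-stationary tail --- \emph{uniformly} over every measurable choice of the linearizing function $t(x)$, and then carry this uniformity through the loss-free frequency summation.
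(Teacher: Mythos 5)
You should first note that the paper itself contains no proof of Lemma \ref{gKdVMaxFun}: it is quoted verbatim from \cite{KPV93}, so the only meaningful comparison is with the classical Kenig--Ponce--Vega argument, which your outline largely reconstructs. Your scaling verification, the reduction to a single dyadic scale, the linearization of the supremum via a measurable $t(x)$, the $TT^{*}$ reduction, the two-regime kernel analysis (van der Corput where $|\Delta x|\lesssim|\Delta t|$, non-stationary phase where $|\Delta x|\gg|\Delta t|$, giving $|K|\lesssim\min(1,|x-y|^{-1/2})$ uniformly in $t(\cdot)$), and the one-dimensional Hardy--Littlewood--Sobolev step $L^{4/3}\to L^{4}$ are all correct, and your observation that Kato smoothing \eqref{KatoSmootEff1} cannot substitute for a genuine maximal argument is apt.

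The genuine gap is in the final frequency summation, precisely the step you defer. Your assertion that ``the same kernel bound'' applies in the $\ell^2_N$-valued $TT^{*}$ fails as stated: rescaling the unit-scale bound gives $N^{-1/2}|K_N(x,y)|\lesssim\min\bigl(N^{1/2},|x-y|^{-1/2}\bigr)$, and the sum $\sum_N \min\bigl(N^{1/2},|x-y|^{-1/2}\bigr)$ diverges, since every dyadic $N\gtrsim|x-y|^{-1}$ contributes a full $|x-y|^{-1/2}$. The stitched bound $\min(1,|\Delta X|^{-1/2})$ discards exactly the $\Delta t$-dependence needed to close the sum. The repair: keep the regime-dependent estimates. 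Since $K_N(x,y)=N\,K_1(N\Delta x,\,N^3\Delta t)$, van der Corput in the stationary regime $N|\Delta x|\lesssim N^3|\Delta t|$ gives
\begin{align}
|K_N|\lesssim N\,(N^3|\Delta t|)^{-1/2}=N^{-1/2}|\Delta t|^{-1/2},
\qquad\text{so}\qquad
N^{-1/2}|K_N|\lesssim N^{-1}|\Delta t|^{-1/2}, \nonumber
\end{align}
which sums geometrically over $N\gtrsim(|\Delta x|/|\Delta t|)^{1/2}$ to $|\Delta x|^{-1/2}$; in the complementary regime the non-stationary bound $N^{-1/2}|K_N|\lesssim N^{1/2}(1+N|\Delta x|)^{-M}$ also sums to $|\Delta x|^{-1/2}$. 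Since $(S^{*}h)_N=N^{-1/4}\mathcal{T}_N^{*}h$ makes $SS^{*}=\sum_N N^{-1/2}\mathcal{T}_N\mathcal{T}_N^{*}$ diagonal with a scalar kernel, ordinary (not vector-valued) HLS then finishes. Relatedly, your first suggestion --- Littlewood--Paley square function in $L^4_x$ after the supremum --- cannot work even as a fallback, because $x\mapsto\bigl(e^{-t(x)\partial_x^3}P_N u_0\bigr)(x)$ is not frequency-localized; the $\ell^2$-valued $TT^{*}$ is the correct (and necessary) device. Alternatively, one can bypass the decomposition entirely, as Kenig--Ponce--Vega do, by proving the global oscillatory-integral bound $\bigl|\int e^{i(x\xi+t\xi^3)}|\xi|^{-1/2}\,d\xi\bigr|\lesssim|x|^{-1/2}$ uniformly in $t$ and running a single $TT^{*}$ with $D_x^{-1/4}$ placed on each side, which trades your summation difficulty for a one-time analysis of the singularity at $\xi=0$.
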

\begin{lemma} \label{gKdVL(XT)Norm}
Let $\theta\in [0, 1]$ and $(p,q)=(\frac{4}{\theta}, \frac{2}{1-\theta})$. Then, we have
	\begin{equation}
		\big\|D_x^{1-\frac{5\theta}{4}}e^{-t\partial^3_x} u_0\big\|_{L_{x}^{p} L_{t}^{q}} \lesssim  \|u_0\|_{L^{2}_x},  \label{gKdVL(XT)Norm1}
	\end{equation}
and 
	\begin{equation}
		\left\|D_x^{2-\frac{5}{p_1}-\frac{5}{p_2}}\int_{0}^{t} e^{-(t-s)\partial^3_x}g(s) ds\right\|_{L_{x}^{p_1} L_{t}^{q_1}} \lesssim  \|g\|_{L_{x}^{p_2'} L_{t}^{q_2'}},  \label{gKdVL(XT)Norm1a}
	\end{equation}
where $\frac{4}{p_1}+\frac{2}{q_1}=\frac{4}{p_2}+\frac{2}{q_2}=1$, $\frac{1}{p_2}+\frac{1}{p'_2}=\frac{1}{q_2}+\frac{1}{q'_2}=1$ and $2\leq q_1, q_2  \leq \infty$.

 Taking $\theta=\frac{2}{5}$  in \eqref{gKdVL(XT)Norm1} yields that 
	\begin{equation}
		\big\|D_x^{\frac{1}{2}}e^{-t\partial^3_x} u_0\big\|_{L_{x}^{10} L_{t}^{\frac{10}{3}}} \lesssim  \|u_0\|_{L^{2}_x}.  \label{gKdVL(XT)Norm2}
	\end{equation}
In particular, if $u(t)$ is the solution to \eqref{gKdV} with initial data $u_0$ given in Theorem  \ref{KPV-gKdV-Globalscattering}, then
	\begin{equation}
		\big\|D_x^{\frac{1}{2}}u\big\|_{L_{x}^{10} L_{t}^{\frac{10}{3}}} <  \infty.  \label{gKdVL(XT)Norm2b}
	\end{equation}

More generally, if $u(t)$ is the global solution to \eqref{gKdV} with  small initial data $\|u_0\|_{H^{s}}$, then 
	\begin{equation}
 \big\|D_x^{1-\frac{5\theta}{4}+s}u\big\|_{L_{x}^{p} L_{t}^{q}}<  \infty.  \label{gKdVL(XT)Norm3}
	\end{equation}
\end{lemma}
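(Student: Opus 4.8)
The plan is to prove the space-time estimate \eqref{gKdVL(XT)Norm1} directly from the dispersive estimate \eqref{PrelDisDecay1} by a $TT^*$ argument combined with the Hardy-Littlewood-Sobolev inequality, and then deduce the inhomogeneous estimate \eqref{gKdVL(XT)Norm1a}, the special case \eqref{gKdVL(XT)Norm2}, and the application to nonlinear solutions. Throughout one works with the variables exchanged relative to the usual Strichartz setup: the time integration sits on the inside and the spatial integration on the outside, which is exactly the feature that gives access to Kato-type smoothing. Concretely, for fixed $x$ the operator $u_0 \mapsto D_x^{1-\frac{5\theta}{4}}e^{-t\partial_x^3}u_0$ is to be regarded as mapping $L^2_x$ into $L^p_x L^q_t$, and I would verify that the exponent pair $(p,q) = (\tfrac{4}{\theta}, \tfrac{2}{1-\theta})$ is admissible, meaning it satisfies the scaling relation $\frac{4}{p} + \frac{2}{q} = 1$ that one reads off from the scaling symmetry of the equation. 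The number of derivatives $1 - \frac{5\theta}{4}$ is then forced by homogeneity.

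\textbf{Homogeneous estimate.} First I would reduce \eqref{gKdVL(XT)Norm1} to its dual/bilinear form. By duality it suffices to bound $\bigl\| \int e^{s\partial_x^3} D_x^{1-\frac{5\theta}{4}} g(s)\, ds \bigr\|_{L^2_x}^2$, which after expanding the square and using the group property $e^{-s\partial_x^3}e^{s'\partial_x^3} = e^{-(s-s')\partial_x^3}$ becomes a double space-time integral against the kernel $D_x^{2-\frac{5\theta}{2}} e^{-(s-s')\partial_x^3}$. To this kernel I apply the dispersive bound \eqref{PrelDisDecay1}: with the right choice of $r$ (namely the $r$ for which $\theta = \frac{r-2}{r}$ and $\theta\alpha = 2 - \frac{5\theta}{2}$ after suitably distributing the $\frac12$-derivative), the kernel decays like $|s-s'|^{-\frac{\alpha+1}{3}\theta}$ in the $t$-variable, so that the inner $t$-integral is controlled by a fractional-integration (Hardy-Littlewood-Sobolev) inequality in time. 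Balancing the decay exponent against the Hölder conjugates $q'$ yields precisely the admissibility condition, closing the estimate and producing \eqref{gKdVL(XT)Norm1}. The specific value $\theta = \frac{2}{5}$ then gives $1 - \frac{5\theta}{4} = \frac12$ and $(p,q) = (10, \tfrac{10}{3})$, which is \eqref{gKdVL(XT)Norm2}.

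\textbf{Inhomogeneous estimate and nonlinear consequences.} For \eqref{gKdVL(XT)Norm1a} I would combine the homogeneous bound \eqref{gKdVL(XT)Norm1} with its adjoint using the Christ-Kiselev lemma to pass from the full-line Duhamel integral to the retarded one $\int_0^t$, thereby obtaining the stated inhomogeneous mapping between the dual exponent pairs $(p_2', q_2')$ and $(p_1, q_1)$, with the derivative count $2 - \frac{5}{p_1} - \frac{5}{p_2}$ again fixed by scaling. Finally, for \eqref{gKdVL(XT)Norm2b} and \eqref{gKdVL(XT)Norm3} I would feed into Duhamel's formula the a priori global spacetime bounds guaranteed by Theorem \ref{KPV-gKdV-Globalscattering} (in particular \eqref{KPV-gKdV-GlobalscatteringA2} and \eqref{KPV-gKdV-GlobalscatteringA6}), estimate the linear part by \eqref{gKdVL(XT)Norm2}/\eqref{gKdVL(XT)Norm1} and the nonlinear Duhamel term by \eqref{gKdVL(XT)Norm1a} together with the fractional Leibniz rule to distribute the extra $s$ derivatives in \eqref{gKdVL(XT)Norm3} across the quintic nonlinearity, using the smallness of $\|u_0\|_{H^s}$ to absorb the nonlinear contribution and close a bootstrap.

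\textbf{Main obstacle.} I expect the principal difficulty to lie in the careful bookkeeping of derivatives and the verification that the dispersive exponent $\frac{\alpha+1}{3}\theta$ matches the Hardy-Littlewood-Sobolev requirement exactly at the endpoint $\alpha = \frac12$, since the estimate is scaling-critical and therefore has no room to spare; in particular the retarded-integral passage via Christ-Kiselev is borderline at the extreme values of $q_1, q_2$, and one must check that $2 \le q_1, q_2 \le \infty$ is precisely the range in which the argument survives. Establishing \eqref{gKdVL(XT)Norm3} requires in addition that the Leibniz-rule splitting of $D_x^{1-\frac{5\theta}{4}+s}(u^5)$ land in norms already known to be finite from Theorem \ref{KPV-gKdV-Globalscattering}, which is the step most sensitive to the regularity index $s$.
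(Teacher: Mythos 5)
Your plan for the inhomogeneous bound and for the nonlinear consequences \eqref{gKdVL(XT)Norm2b}, \eqref{gKdVL(XT)Norm3} (Duhamel plus the a priori bounds of Theorem \ref{KPV-gKdV-Globalscattering}) is sound and is essentially what the paper does; in fact for \eqref{gKdVL(XT)Norm2b} no Leibniz rule or bootstrap is needed, since \eqref{gKdVL(XT)Norm1a} with $(p_1,q_1)=(10,\tfrac{10}{3})$, $(p_2,q_2)=(\infty,2)$ absorbs the full $D_x^{3/2}$ and H\"older gives $\|u^5\|_{L^1_xL^2_t}\le\|u\|_{L^5_xL^{10}_t}^5$. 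The genuine gap is in your proof of the homogeneous estimate \eqref{gKdVL(XT)Norm1}. A $TT^*$ argument in which you expand the square, apply the fixed-time dispersive bound \eqref{PrelDisDecay1} to the kernel, and then use Hardy--Littlewood--Sobolev in time cannot produce a norm with the space variable on the outside: in the bilinear form $\iint\big\langle D_x^{2-\frac{5\theta}{2}}e^{-(s-s')\partial_x^3}g(s'),g(s)\big\rangle_{L^2_x}\,ds'\,ds$ you must apply H\"older in $x$ at fixed $(s,s')$ before the dispersive bound can be invoked, and after HLS in time you land on $\|g\|_{L^{q'}_tL^{p'}_x}$, i.e.\ the dual of the \emph{time-outside} norm. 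That argument proves the classical Strichartz estimates of Lemma \ref{PrelStrichartz}, not \eqref{gKdVL(XT)Norm1}, and the two families are genuinely different. Concretely, your kernel bound does not exist: you need a fixed-time gain of $2-\frac{5\theta}{2}$ derivatives, while \eqref{PrelDisDecay1} caps the gain at $\theta_r\alpha\le\frac12$ (the restriction $\alpha\le\frac12$ is sharp, since the kernel of $\partial_xe^{-t\partial_x^3}$ is $t^{-2/3}\mathrm{Ai}'(xt^{-1/3})$ and $\mathrm{Ai}'$ is unbounded). For $\theta<\frac35$, in particular for the case $\theta=\frac25$ on which the whole nonlinear argument rests, $2-\frac{5\theta}{2}>\frac12$ and no admissible $(r,\alpha)$ exists. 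Nor can you recover \eqref{gKdVL(XT)Norm2} from a time-outside estimate via Minkowski: the bound $\|D_x^{1/2}e^{-t\partial_x^3}u_0\|_{L^{10/3}_tL^{10}_x}\lesssim\|u_0\|_{L^2}$ is false (a wave packet at frequency $N$ makes the left side grow like $N^{1/5}$), so the order of the mixed norms is essential, not a formality.

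The extra half derivative in \eqref{gKdVL(XT)Norm1} does not come from fixed-time dispersion at all; it comes from integration in time (each spatial point sees a frequency-$N$ packet only for a time $\sim N^{-2}$ because of the group velocity $\sim N^2$). Accordingly, the paper proves \eqref{gKdVL(XT)Norm1} by interpolating two endpoint estimates of a different nature: the Kato local smoothing identity \eqref{KatoSmootEff1}, which gives a \emph{full} derivative gain in $L^\infty_xL^2_t$ and is proved by Plancherel in $t$ after the change of variables $\tau=\xi^3$, and the maximal function estimate \eqref{gKdVMaxFun1} in $L^4_xL^\infty_t$ (which costs $D_x^{1/4}$); this is Corollary 3.8 of \cite{KPV93}. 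Interpolation with weight $\theta$ on the maximal estimate gives exactly $(p,q)=(\tfrac4\theta,\tfrac2{1-\theta})$ and the derivative count $(1-\theta)\cdot 1-\theta\cdot\tfrac14=1-\tfrac{5\theta}{4}$. Those two inputs, rather than \eqref{PrelDisDecay1}, are what your proof is missing; once \eqref{gKdVL(XT)Norm1} is in hand, your duality/Christ--Kiselev step for \eqref{gKdVL(XT)Norm1a} and the Duhamel argument for \eqref{gKdVL(XT)Norm2b}--\eqref{gKdVL(XT)Norm3} go through as you describe.
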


\begin{proof}
	\eqref{gKdVL(XT)Norm1} can be derived by  interpolating \eqref{KatoSmootEff1} and \eqref{gKdVMaxFun1}. This method is form Kenig, Ponce and Vega, see Corollary 3.8 in \cite{KPV93}. 	\eqref{gKdVL(XT)Norm1a} is dual to	\eqref{gKdVL(XT)Norm1}.
	
If $u(t)$ is the solution to \eqref{gKdV}, then
\begin{align}
u(t)=e^{-t\partial^3_x}u_0+\int_{0}^{t} e^{-(t-s)\partial^3_x}\partial_x u^5(s) ds \nonumber
	\end{align}
which yields by using  \eqref{gKdVL(XT)Norm2}, \eqref{gKdVL(XT)Norm1a} with $(p_1, q_1)=(10, \frac{10}{3})$ and $(p_2, q_2)=(\infty, 2)$ and  \eqref{KPV-gKdV-GlobalscatteringA2}  that
\begin{align}
\big\|D_x^{\frac{1}{2}}u\big\|_{L_{x}^{10} L_{t}^{\frac{10}{3}}}&\lesssim \big\|D_x^{\frac{1}{2}}e^{-t\partial^3_x}u_0\big\|_{L_{x}^{10} L_{t}^{\frac{10}{3}}}+\left\|D_x^{\frac{1}{2}}\partial_x\int_{0}^{t} e^{-(t-s)\partial^3_x} u^5(s) ds \right\|_{L_{x}^{10} L_{t}^{\frac{10}{3}}}\notag \\
&\lesssim \|u_0\|_{L_{x}^{2}}+\left\|u^5 \right\|_{L_{x}^{1} L_{t}^{2}}
\lesssim \|u_0\|_{L_{x}^{2}}+\left\|u \right\|^5_{L_{x}^{5} L_{t}^{10}}<\infty.\nonumber
	\end{align}

\eqref{gKdVL(XT)Norm3} can be obtained using a similar argument. Thus, we finish the proof.
\end{proof}
\begin{lemma} \label{gKdVLNorm L245xLqqt}
Assume that  $u(t)$ is the global solution to the mass-critical generalized KdV equation \eqref{gKdV} with  small initial data $\|u_0\|_{H^{\frac{1}{3}}}\ll 1$, then 
	\begin{equation}
 \|u\|_{L_{x}^{\frac{24}{5}} L_{t}^{\infty}}<  \infty.   \label{gKdVLNorm L245xLqqt0}
	\end{equation}
\end{lemma}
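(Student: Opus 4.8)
The plan is to read off the bound directly from the mixed-norm spacetime estimate \eqref{gKdVL(XT)Norm3} already established in Lemma \ref{gKdVL(XT)Norm}, followed by a single Sobolev embedding in the spatial variable; since \eqref{gKdVL(XT)Norm3} is a statement about the full nonlinear solution $u$, no separate Duhamel analysis is needed. The guiding heuristic is a scaling count: under the scaling of \eqref{gKdV}, the maximal-in-time norm $\|e^{-t\partial^3_x}u_0\|_{L^p_xL^\infty_t}$ is scale invariant precisely when measured against $\|u_0\|_{\dot H^{1/2-1/p}}$, so for the target $p=\tfrac{24}{5}$ the natural regularity is $s=\tfrac12-\tfrac{5}{24}=\tfrac{7}{24}$, which lies strictly below $\tfrac13$. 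This is exactly why the hypothesis $\|u_0\|_{H^{1/3}}\ll1$ suffices.

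Concretely, first I would note that $\tfrac{7}{24}<\tfrac13$ yields $\|u_0\|_{H^{7/24}}\le\|u_0\|_{H^{1/3}}\ll1$, so $u$ is a small-data global solution at regularity $s=\tfrac{7}{24}$ as well. Applying \eqref{gKdVL(XT)Norm3} with $s=\tfrac{7}{24}$ and $\theta=1$ — for which $(p,q)=(4,\infty)$ and the derivative exponent equals $1-\tfrac54+\tfrac{7}{24}=\tfrac{1}{24}$ — gives
\[
\big\|D_x^{1/24}u\big\|_{L^4_xL^\infty_t}<\infty .
\]
I would then invoke the one-dimensional Sobolev embedding $\dot W^{1/24,4}(\mathbb R)\hookrightarrow L^{24/5}(\mathbb R)$, whose exponent relation $\tfrac{5}{24}=\tfrac14-\tfrac1{24}$ is exactly matched, to conclude
\[
\|u\|_{L^{24/5}_xL^\infty_t}\lesssim \big\|D_x^{1/24}u\big\|_{L^4_xL^\infty_t}<\infty .
\]

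The one point requiring care — and the only place I expect any real work — is that this Sobolev embedding is applied to a function of $x$ valued in the Banach space $L^\infty_t$, not to a scalar function. This is routine but should be justified: writing $u=I_{1/24}\big(D_x^{1/24}u\big)$ with $I_{1/24}$ the Riesz potential, whose kernel $c|x|^{-23/24}$ is nonnegative, Minkowski's integral inequality furnishes the pointwise bound $\big\|I_{1/24}g(x)\big\|_{L^\infty_t}\le \big(I_{1/24}\|g(\cdot)\|_{L^\infty_t}\big)(x)$, after which the scalar Hardy–Littlewood–Sobolev inequality from $L^4(\mathbb R)$ to $L^{24/5}(\mathbb R)$ closes the argument. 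If one prefers to avoid singling out $s=\tfrac{7}{24}$, an equivalent route is to apply \eqref{gKdVL(XT)Norm3} at $\theta=1$ twice, with $s=\tfrac14$ and with $s=\tfrac13$, obtaining $u\in L^4_xL^\infty_t$ and $D_x^{1/12}u\in L^4_xL^\infty_t$ (hence $u\in L^6_xL^\infty_t$), and then interpolate via $L^{24/5}_x=[L^4_x,L^6_x]_{1/2}$; the same vector-valued Sobolev/HLS step is the substantive ingredient in either version.
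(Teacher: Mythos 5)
Your proposal is correct, but it takes a genuinely different route from the paper's. The paper proves the lemma by a fresh Duhamel analysis: the linear part is handled by Sobolev embedding in the \emph{time} variable, $\|e^{-t\partial^3_x}u_0\|_{L^{24/5}_xL^\infty_t}\lesssim \|e^{-t\partial^3_x}u_0\|_{L^{24/5}_xL^{12}_t}+\|D_t^{1/12}e^{-t\partial^3_x}u_0\|_{L^{24/5}_xL^{12}_t}$, followed by the observation that on the KdV characteristic surface $D_t^{1/12}$ may be traded for $D_x^{1/4}$, and then \eqref{gKdVL(XT)Norm1} with $\theta=\tfrac56$ (landing, just as your scaling heuristic predicts, at $H^{7/24}$); the Duhamel part is handled by the auxiliary retarded estimate \eqref{gKdVLNorm L245xLqqt3} (imported from the strategy of Proposition 3.13 in Kenig--Ponce--Vega) together with \eqref{gKdVL(XT)Norm1a}, and H\"older bounds such as $\|u\|^4_{L^5_xL^{10}_t}\|u_x\|_{L^{15}_xL^{30/11}_t}$, the last factor being exactly where the $H^{1/3}$ hypothesis enters (it is \eqref{gKdVL(XT)Norm3} with $\theta=\tfrac{4}{15}$, $s=\tfrac13$). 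You instead invoke the paper's nonlinear bound \eqref{gKdVL(XT)Norm3} once, at the maximal-function endpoint $\theta=1$, $(p,q)=(4,\infty)$, $s=\tfrac{7}{24}$, and finish with a vector-valued Sobolev/HLS embedding in $x$, whose justification via positivity of the Riesz kernel and Minkowski is exactly right (and is the one point that genuinely needs the argument you give, since $L^\infty_t$-valued singular-integral bounds are not available). What each approach buys: yours is much shorter, makes the critical regularity $\tfrac{7}{24}<\tfrac13$ transparent, and avoids introducing \eqref{gKdVLNorm L245xLqqt3}; the paper's avoids ever using \eqref{gKdVL(XT)Norm3} at the endpoint $q=\infty$ — all of its own invocations of that bound are at interior $\theta$ (e.g.\ $\theta=\tfrac1{30},\tfrac4{15},\tfrac13$), and the very existence of a separate hands-on proof of this lemma suggests the author regards the $L^\infty_t$ case as requiring independent justification rather than following "for free" from the sketched proof of \eqref{gKdVL(XT)Norm3}. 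Since Lemma \ref{gKdVL(XT)Norm} is stated for the full range $\theta\in[0,1]$, your citation is formally legitimate and your proof stands at the same level of rigor as the paper's other uses of that lemma; but be aware that if one insists on unwinding \eqref{gKdVL(XT)Norm3} at $\theta=1$, the Duhamel term there requires \eqref{gKdVL(XT)Norm1a} with $q_1=\infty$ and a careful choice of dual exponents (to keep the Hilbert transform acting on reflexive mixed-norm spaces), which is essentially the work the paper's proof performs explicitly.
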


\begin{proof} According to Duhamel formula, we have
\begin{align}
u(t)=e^{-t\partial^3_x}u_0+\int_{0}^{t} e^{-(t-s)\partial^3_x}\partial_x u^5(s) ds . \nonumber
	\end{align}
Hence,
\begin{align}
 \|u\|_{L_{x}^{\frac{24}{5}} L_{t}^{\infty}}\leq \big\|e^{-t\partial^3_x}u_0\big\|_{L_{x}^{\frac{24}{5}} L_{t}^{\infty}}+\left\|\int_{0}^{t} e^{-(t-s)\partial^3_x}\partial_x u^5(s) ds \right\|_{L_{x}^{\frac{24}{5}} L_{t}^{\infty}}:=T_1+T_2. \nonumber
	\end{align}

We estimate the first part by using Sobolev inequality
\begin{align}
T_1= \big\|e^{-t\partial^3_x}u_0\big\|_{L_{x}^{\frac{24}{5}} L_{t}^{\infty}}&\lesssim \big\|e^{-t\partial^3_x}u_0\big\|_{L_{x}^{\frac{24}{5}} L_{t}^{12}}+\big\|D^{1/12}_te^{-t\partial^3_x}u_0\big\|_{L_{x}^{\frac{24}{5}} L_{t}^{12}}\notag \\
&\lesssim \|u_0\|_{L_{x}^{2}}+\big\|D^{1/12}_te^{-t\partial^3_x}u_0\big\|_{L_{x}^{\frac{24}{5}} L_{t}^{12}}. \label{gKdVLNorm L245xLqqt1}
	\end{align}
Observe that
\begin{align}
D^{1/12}_te^{-t\partial^3_x}u_0&=\int_{\mathbb{R}} e^{ix\xi} \widehat{u_0}(\xi) d\xi \int_{\mathbb{R}} e^{it\tau}(i\tau)^{1/12}\delta(\tau-\xi^3)  d\tau  \notag \\
&=\int_{\mathbb{R}} e^{ix\xi} (i\xi^3)^{1/12} e^{it\xi^3}\widehat{u_0}(\xi) d\xi=e^{i\frac{\pi}{12}}D^{1/4}_xe^{-t\partial^3_x}u_0, \nonumber
	\end{align}
so by taking use of \eqref{gKdVL(XT)Norm1} with $\theta=\frac{5}{6}$ one gets
\begin{align}
T_1\lesssim \|u_0\|_{L_{x}^{2}}+\big\|D^{1/4}_xe^{-t\partial^3_x}u_0\big\|_{L_{x}^{\frac{24}{5}}L_{t}^{12}}\lesssim \|u_0\|_{H_{x}^{\frac{7}{24}}}. \label{gKdVLNorm L245xLqqt2}
	\end{align}

Next, we want to handle the second part. Applying the same strategy as Proposition 3.13 in \cite{KPV93} introduced by Kenig, Ponce and Vega, one can get 
\begin{align}
\left\|D^{\frac{1}{12}}_t\int_{0}^{t} e^{-(t-s)\partial^3_x}g(s) ds \right\|_{L_{x}^{\frac{24}{5}} L_{t}^{12}}\lesssim \big\|g \big\|_{L_{x}^{\frac{15}{13}} L_{t}^{\frac{30}{23}}}. \label{gKdVLNorm L245xLqqt3}
	\end{align}
Then, \eqref{gKdVL(XT)Norm1a} and \eqref{gKdVLNorm L245xLqqt3} help imply
\begin{align}
T_2\lesssim& \|u^4u_x\|_{L_{x}^{\frac{120}{97}} L_{t}^{\frac{60}{53}}}+\|u^4u_x \|_{L_{x}^{\frac{15}{13}} L_{t}^{\frac{30}{23}}} \notag \\
\lesssim& \|u\|^4_{L_{x}^{5} L_{t}^{10}}\|u_x\|_{L_{x}^{120} L_{t}^{\frac{60}{29}}}+\|u\|^4_{L_{x}^{5} L_{t}^{10}}\|u_x\|_{L_{x}^{15} L_{t}^{\frac{30}{11}}}\lesssim C(\|u_0\|_{H^{\frac{1}{3}}}). \label{gKdVLNorm L245xLqqt4}
	\end{align}
This finishes the proof.
\end{proof}

\subsection{Strichartz estimates for ZK} 
 
Let us recall the dispersive estimate for 2D gZK equation. We denote $U(t):=e^{-t\partial_x\Delta}$ for simplicity.
 
\begin{lemma}[see Lemma 2.3 in \cite{LP09}] \label{2dZK-PrelDisDecay}
Let $0\leq \alpha \leq \frac{1}{2}$ and $0\leq \theta\leq 1$. Then,
	\begin{align}
		\left\|D_x^{\theta \alpha}U(t)u_0\right\|_{L^r_{xy}} \lesssim  |t|^{-\frac{\alpha+2}{3}\theta } \left\|u_0\right\|_{L^{r'}_{xy}} ,\label{2dZK-PrelDisDecay1}
	\end{align}
where $r=\frac{2}{1-\theta}$  and $\frac{1}{r}+\frac{1}{r'}=1$. 
\end{lemma}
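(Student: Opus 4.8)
The plan is to obtain the full two–parameter family from a single endpoint estimate together with interpolation. At the two ends of the strip the inequality is transparent: for $\theta=0$ (so $r=2$) it reduces to the $L^2$ conservation law $\|U(t)u_0\|_{L^2_{xy}}=\|u_0\|_{L^2_{xy}}$, which holds because $U(t)$ is a Fourier multiplier with unimodular symbol $e^{it\xi(\xi^2+\eta^2)}$; while for $\theta=1$ (so $r=\infty$) it is the dispersive estimate
\[
\left\|D_x^{\alpha}U(t)u_0\right\|_{L^\infty_{xy}}\lesssim |t|^{-\frac{\alpha+2}{3}}\|u_0\|_{L^1_{xy}},\qquad 0\le\alpha\le\tfrac12 .
\]
To interpolate between these while producing the $\theta$-dependent derivative $D_x^{\theta\alpha}$, I would invoke Stein's analytic interpolation theorem applied to the family $T_z=D_x^{\alpha z}U(t)$ on the strip $0\le\Re z\le1$. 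On $\Re z=0$ the symbol $|\xi|^{\alpha z}$ is unimodular, so $T_{iy}$ is bounded on $L^2_{xy}$ with norm one; on $\Re z=1$ the modulus of the amplitude is again $|\xi|^{\alpha}$, so $T_{1+iy}$ obeys the displayed $L^1\!\to\!L^\infty$ bound up to an admissible polynomial factor in $y$ arising from the bounded variation of $|\xi|^{i\alpha y}$. Evaluating at $z=\theta$ yields $D_x^{\theta\alpha}U(t):L^{r'}_{xy}\to L^{r}_{xy}$ with norm $\lesssim |t|^{-\frac{\alpha+2}{3}\theta}$ and $\frac1r=\frac{1-\theta}2$, which is precisely \eqref{2dZK-PrelDisDecay1}.

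It remains to establish the endpoint. Writing $D_x^{\alpha}U(t)u_0=K_t*u_0$, the estimate is equivalent to $\|K_t\|_{L^\infty_{xy}}\lesssim |t|^{-\frac{\alpha+2}{3}}$, where
\[
K_t(x,y)=c\int_{\mathbb{R}^2}e^{i(x\xi+y\eta)}|\xi|^{\alpha}e^{it\xi(\xi^2+\eta^2)}\,d\xi\,d\eta .
\]
The change of variables $(\xi,\eta)\mapsto |t|^{-1/3}(\xi,\eta)$ removes $t$ from the phase and factors out exactly $|t|^{-\alpha/3}|t|^{-2/3}=|t|^{-\frac{\alpha+2}{3}}$, so the matter reduces to the uniform-in-$(x,y)$ bound $\|K_{\pm1}\|_{L^\infty_{xy}}<\infty$; the case $t=-1$ is the complex conjugate of $t=+1$, so we may take $t=1$. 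I would then carry out the $\eta$-integration: since the $\eta$-dependence of the phase is the quadratic $y\eta+\xi\eta^2$, the resulting Fresnel integral equals $c\,|\xi|^{-1/2}e^{\frac{i\pi}{4}\operatorname{sgn}\xi}e^{-iy^2/(4\xi)}$, leaving the one–dimensional oscillatory integral
\[
K_1(x,y)=c\int_{\mathbb{R}}e^{i\psi(\xi)}|\xi|^{\alpha-\frac12}e^{\frac{i\pi}{4}\operatorname{sgn}\xi}\,d\xi,\qquad \psi(\xi)=x\xi+\xi^3-\frac{y^2}{4\xi}.
\]

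Bounding this last integral uniformly in $(x,y)$ is the step I expect to be the main obstacle. Setting $b=y^2/4$, one computes $\psi''(\xi)=6\xi-2b\,\xi^{-3}$ and $\psi'''(\xi)=6+6b\,\xi^{-4}\ge 6$; crucially, neither depends on $x$, so all van der Corput bounds below are uniform in $x$, which is exactly why this method is well suited here. The second derivative vanishes only at the single turning point $\xi_\ast=(b/3)^{1/4}\sim|y|^{1/2}$. I would decompose dyadically in $|\xi|$ and treat each annulus $|\xi|\sim\mu$ according to its position. On the high–frequency annuli $\mu\gg|y|^{1/2}$ one has $|\psi''|\sim\mu$, and van der Corput of order two gives a per-annulus bound of size $\mu^{\alpha-1}$, summable at infinity; on the low–frequency annuli $\mu\ll|y|^{1/2}$ (present only for large $|y|$) the term $b\,\xi^{-3}$ dominates $\psi''$ without sign change, again permitting order-two van der Corput; on the $O(1)$ many annuli near $\xi_\ast$, where $\psi''$ degenerates, one instead uses $|\psi'''|\ge6$ and van der Corput of order three; and in the non-oscillatory core near the origin the amplitude $|\xi|^{\alpha-\frac12}$ is itself integrable (because $\alpha>-\tfrac12$) and bounds its piece directly. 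Summing these contributions produces a constant independent of $(x,y)$, and it is the turning-point scale that forces the restriction $0\le\alpha\le\tfrac12$. The genuinely delicate point is the bookkeeping that matches the stationary-phase region around $\xi_\ast$, the non-oscillatory low-frequency region, and the singular amplitude so that the dyadic sum converges uniformly; this case analysis, rather than any individual estimate, carries the real weight of the lemma.
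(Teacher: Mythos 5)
The paper itself offers no proof of this lemma---it is quoted, with attribution, from Lemma 2.3 of \cite{LP09}---and your argument reconstructs essentially that cited proof: the $L^2$ unitarity endpoint and the $L^1\to L^\infty$ kernel bound (obtained by scaling out $|t|$, evaluating the Fresnel integral in $\eta$ to reduce to the Airy-type integral with phase $x\xi+\xi^3-y^2/(4\xi)$ and amplitude $|\xi|^{\alpha-1/2}$, then applying second- and third-order van der Corput estimates around the turning point, uniformly in $x$ since $\psi''$, $\psi'''$ are $x$-independent), glued together by Stein's analytic interpolation for the family $D_x^{\alpha z}U(t)$. Your write-up is correct, including the identification of the turning-point scale as the source of the restriction $\alpha\le\tfrac12$; the only cosmetic slips are that $\psi''$ vanishes at two symmetric points $\pm(b/3)^{1/4}$ rather than one (handled by the oddness of $\psi$ and reduction to $\xi>0$), and that the formal $\eta$-integration should be justified by a standard regularization.
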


We will employ refinements of  Strichartz estimates in Lorentz spaces \cite{Grafa14}. It was Fan, Killip, Visan and Zhao \cite{FKVZ24} who first used Lorentz spaces to study dispersive decay for solutions to the mass-critical nonlinear Schr\"odinger equation.

\begin{definition}[Lorentz spaces] Let $1\leq p<\infty$ and $1\leq q\leq\infty$. The Lorentz space $L^{p,q}(\mathbb{R}^d)$ is the space of measurable functions $f:\mathbb{R}^d \to \mathbb{C}$ for which the quasinorm
$$\|f\|_{L^{p,q}(\mathbb{R}^d)}=p^{\frac{1}{q}}\left\| \lambda \big| \{x\in \mathbb{R}^d: |f(x)|>\lambda \}\big|^{\frac{1}{p}} \right\|_{L^{q}((0, \infty), \frac{d\lambda}{\lambda})}$$
is finite. Here, $|A|$ denotes the Lebesgue measure of the set $A\subseteq \mathbb{R}^d$.
\end{definition}

Note that $L^{p,p}(\mathbb{R}^d)\simeq L^{p}(\mathbb{R}^d)$, and $L^{p,\infty}(\mathbb{R}^d)$ coincides with the weak $L^{p}(\mathbb{R}^d)$ space. If $1\leq p<\infty$ and $1\leq q<r\leq\infty$, then  $L^{p,q}(\mathbb{R}^d)\hookrightarrow  L^{p,r}(\mathbb{R}^d)$. 

\begin{lemma}[H\"older inequality in Lorentz spaces] Let $1\leq p,p_1,p_2<\infty$, $1\leq q,q_1,q_2\leq\infty$, and $\frac{1}{p}=\frac{1}{p_1}+\frac{1}{p_2}$, $\frac{1}{q}=\frac{1}{q_1}+\frac{1}{q_2}$. Then,
$$\|fg\|_{L^{p,q}(\mathbb{R}^d)}\lesssim \|f\|_{L^{p,q}(\mathbb{R}^d)}\|g\|_{L^{p,q}(\mathbb{R}^d)}.$$
\end{lemma}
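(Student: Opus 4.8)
The plan is to reduce the Lorentz-space Hölder inequality to an ordinary Hölder inequality on the half-line by passing to decreasing rearrangements. Throughout I write $d_f(\lambda)=|\{x:|f(x)|>\lambda\}|$ for the distribution function and $f^*(t)=\inf\{\lambda>0:d_f(\lambda)\le t\}$ for the decreasing rearrangement of $f$. The first step is to record the standard equivalent form of the quasinorm,
\[
\|f\|_{L^{p,q}(\mathbb{R}^d)}\approx \left\|t^{1/p}f^*(t)\right\|_{L^q((0,\infty),\,dt/t)},
\]
which follows from the definition given above by the change of variables relating the integral against $d\lambda/\lambda$ in $\lambda$ to the integral against $dt/t$ in $t$, using that $f^*$ and $d_f$ are generalized inverses (see \cite{Grafa14}); the implicit constants depend only on $p$ and $q$, and when $q=\infty$ the expression is read as $\sup_{t>0}t^{1/p}f^*(t)$.

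The second step is the submultiplicativity of the rearrangement: for all $t_1,t_2>0$,
\[
(fg)^*(t_1+t_2)\le f^*(t_1)\,g^*(t_2).
\]
This is immediate from the inclusion $\{|fg|>f^*(t_1)g^*(t_2)\}\subseteq \{|f|>f^*(t_1)\}\cup\{|g|>g^*(t_2)\}$ together with $d_f(f^*(t_1))\le t_1$ and $d_g(g^*(t_2))\le t_2$, whence the left-hand set has measure at most $t_1+t_2$. Taking $t_1=t_2=t/2$ gives the pointwise bound $(fg)^*(t)\le f^*(t/2)g^*(t/2)$, which is the only place the product structure enters.

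Combining these, I would pass to the rearrangement form, insert the pointwise bound, and perform the change of variables $t\mapsto 2s$ (which costs only a factor $2^{1/p}$, since $dt/t=ds/s$), obtaining
\[
\|fg\|_{L^{p,q}}\lesssim \left\|s^{1/p}f^*(s)g^*(s)\right\|_{L^q(ds/s)}=\left\|\bigl(s^{1/p_1}f^*(s)\bigr)\bigl(s^{1/p_2}g^*(s)\bigr)\right\|_{L^q(ds/s)},
\]
where the splitting $s^{1/p}=s^{1/p_1}s^{1/p_2}$ uses $\tfrac1p=\tfrac1{p_1}+\tfrac1{p_2}$. The final step is the ordinary Hölder inequality on $((0,\infty),ds/s)$ with exponents $q_1/q$ and $q_2/q$, which are conjugate because $\tfrac1q=\tfrac1{q_1}+\tfrac1{q_2}$; this gives
\[
\left\|\bigl(s^{1/p_1}f^*(s)\bigr)\bigl(s^{1/p_2}g^*(s)\bigr)\right\|_{L^q(ds/s)}\le \left\|s^{1/p_1}f^*(s)\right\|_{L^{q_1}(ds/s)}\left\|s^{1/p_2}g^*(s)\right\|_{L^{q_2}(ds/s)}\approx \|f\|_{L^{p_1,q_1}}\|g\|_{L^{p_2,q_2}},
\]
where the last equivalence is the first step applied to each factor. (Here the right-hand norms are $L^{p_1,q_1}$ and $L^{p_2,q_2}$, matching the hypotheses; the statement as displayed above contains a minor typo in the exponents on the right.) The cases in which some of $q,q_1,q_2$ equal $\infty$ are handled identically using the $\sup$-form.

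The computations are routine once the two ingredients above are in place. I expect the only point requiring genuine care to be the equivalence between the distribution-function quasinorm used in the definition and the rearrangement form $\|t^{1/p}f^*\|_{L^q(dt/t)}$, since the former is merely a quasinorm (not subadditive) when $q>p$; rather than reprove this I would simply invoke the standard equivalence from \cite{Grafa14}, the structurally essential fact being the submultiplicativity $(fg)^*(t_1+t_2)\le f^*(t_1)g^*(t_2)$.
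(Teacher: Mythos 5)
Your proof is correct. The paper does not actually prove this lemma --- it is quoted as a standard fact about Lorentz spaces with reference to \cite{Grafa14} --- and your argument (the equivalence of the distribution-function quasinorm with the rearrangement form $\|t^{1/p}f^*(t)\|_{L^q(dt/t)}$, the submultiplicativity $(fg)^*(t_1+t_2)\le f^*(t_1)\,g^*(t_2)$, and then ordinary H\"older on $((0,\infty),ds/s)$ with conjugate exponents $q_1/q$ and $q_2/q$) is precisely the standard proof of O'Neil's inequality that the cited reference supplies, so there is nothing to compare beyond noting that you have filled in the omitted details correctly. You are also right about the typo: given the hypotheses $\frac{1}{p}=\frac{1}{p_1}+\frac{1}{p_2}$ and $\frac{1}{q}=\frac{1}{q_1}+\frac{1}{q_2}$, the right-hand side should read $\|f\|_{L^{p_1,q_1}}\|g\|_{L^{p_2,q_2}}$, which is the form the paper actually invokes in Section 4 when pairing the weak-type factor $\||s|^{-\frac{2}{3}(1-\frac{2}{r})}\|_{L^{\frac{3r}{2(r-2)},\infty}_s}$ with the Lorentz--Strichartz norms of the solution.
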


Applying Hardy-Littlewood-Sobolev inequality in Lorentz spaces yields the following Lorentz-space improvement:
\begin{lemma}[Lorentz-Strichartz estimates]\label{2dZK-PrelStrichartz}
	Let $2\leq p,q\leq \infty$, $\frac{3}{q}+\frac{2}{p}=1$  and $\frac{1}{p}+\frac{1}{p'}=\frac{1}{q}+\frac{1}{q'}=1$. Then
	\begin{align}
		\left\|U(t)u_0\right\|_{L_t^{q,2}L^p_{xy}(\mathbb{R}\times\mathbb{R}^2)} &\lesssim
		\|u_0\|_{L^2_{xy}(\mathbb{R}^2)}, \label{2dZK-Stri1} \\
	\left\| \int_{0}^{t} U(t-s)g(s, \cdot)ds\right\|_{L_t^{q,2}L^p_{xy}(\mathbb{R}\times\mathbb{R}^2)} &\lesssim
		\|g\|_{L_t^{q',2}L^{p'}_{xy}(\mathbb{R}\times\mathbb{R}^2)}. \label{2dZK-Stri2}
	\end{align}
\end{lemma}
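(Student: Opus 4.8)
The plan is to run a $TT^\ast$ argument built on the dispersive bound of Lemma~\ref{2dZK-PrelDisDecay}, with the Hardy--Littlewood--Sobolev step carried out in Lorentz spaces in the time variable. Writing $T\colon L^2_{xy}\to L^{q,2}_tL^p_{xy}$ for the operator $Tf=U(t)f$, and recalling that $U(t)$ is unitary on $L^2_{xy}$ so that $U(t)^\ast=U(-t)$, the formal adjoint satisfies $TT^\ast G(t)=\int_{\mathbb{R}}U(t-s)G(s)\,ds$. Since the dual of $L^{q,2}_tL^p_{xy}$ is $L^{q',2}_tL^{p'}_{xy}$, the abstract identity $\|T\|^2=\|TT^\ast\|$ reduces both \eqref{2dZK-Stri1} and the full-line version of \eqref{2dZK-Stri2} to the single estimate
\begin{align}
\left\|\int_{\mathbb{R}}U(t-s)G(s)\,ds\right\|_{L^{q,2}_tL^p_{xy}}\lesssim \|G\|_{L^{q',2}_tL^{p'}_{xy}}. \nonumber
\end{align}

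To establish this, I would first freeze the time integration: taking $\alpha=0$ in \eqref{2dZK-PrelDisDecay1} gives the pointwise-in-time kernel bound $\|U(t-s)G(s)\|_{L^p_{xy}}\lesssim |t-s|^{-\frac{2}{3}(1-\frac{2}{p})}\|G(s)\|_{L^{p'}_{xy}}$, where $\frac{2}{3}(1-\frac{2}{p})=\frac{2}{q}$ by the admissibility relation $\frac{3}{q}+\frac{2}{p}=1$. Applying Minkowski's inequality in the spatial variables then bounds the left-hand side by the one-dimensional time convolution of $\|G(s)\|_{L^{p'}_{xy}}$ against the kernel $|t|^{-2/q}$. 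The key observation is that this kernel lies exactly in the weak Lebesgue space $L^{q/2,\infty}_t$, so O'Neil's convolution inequality (Young's inequality in Lorentz spaces, \cite{Grafa14}) applies: with $\frac{1}{q}+1=\frac{1}{q'}+\frac{2}{q}$ for the primary indices and $\frac{1}{2}\le\frac{1}{2}+\frac{1}{\infty}$ for the secondary ones, convolution against an $L^{q/2,\infty}_t$ kernel maps $L^{q',2}_t$ into $L^{q,2}_t$, which is precisely the claimed bound.

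The main obstacle is the endpoint $p=\infty$, $q=3$: there the kernel $|t|^{-2/3}$ sits in $L^{3/2,\infty}_t$ but not in $L^{3/2}_t$, so the classical Hardy--Littlewood--Sobolev and Young inequalities fail and one cannot close the argument in ordinary Lebesgue spaces. This is exactly the difficulty that the Lorentz refinement overcomes, and it is the reason the time norm appears as $L^{q,2}_t$ rather than $L^q_t$; the secondary index $2$ is the value delivered by O'Neil's inequality against an $L^{q/2,\infty}$ kernel. Finally, to pass from the full-line estimate to the retarded estimate \eqref{2dZK-Stri2} over $[0,t]$, I would invoke the Christ--Kiselev lemma, which applies since the admissibility condition forces $q\ge 3$ and hence $q'<q$ strictly; one checks that the Christ--Kiselev argument tolerates the Lorentz time norms here because of this strict ordering of exponents.
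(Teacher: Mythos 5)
Your core argument --- the $TT^\ast$ reduction, the $\alpha=0$ dispersive bound from Lemma \ref{2dZK-PrelDisDecay}, Minkowski's inequality in the spatial variables, and O'Neil's convolution inequality against the kernel $|t|^{-2/q}\in L^{q/2,\infty}_t$ --- is exactly the route the paper intends (the paper offers no more detail than ``applying Hardy--Littlewood--Sobolev inequality in Lorentz spaces''), and your exponent bookkeeping is correct for every admissible pair with $3\le q<\infty$; the remaining pair $(q,p)=(\infty,2)$ is immediate from unitarity. Two points, however, need correction.

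First, your stated ``main obstacle'' is not real. At $(q,p)=(3,\infty)$ the classical Hardy--Littlewood--Sobolev inequality does not fail: convolution with $|t|^{-2/3}$ maps $L^{3/2}_t\to L^{3}_t$ since $1<3/2<3<\infty$, which is precisely the regime HLS is designed for (Young's inequality fails there, HLS does not). In Keel--Tao language the decay rate here is $\sigma=2/3<1$, so the forbidden endpoint $q=2$ never lies on the admissibility line $\frac{3}{q}+\frac{2}{p}=1$; every pair in the lemma is non-endpoint, and the ordinary Lebesgue-space estimate $\|U(t)u_0\|_{L^q_tL^p_{xy}}\lesssim\|u_0\|_{L^2_{xy}}$ already holds by the classical argument. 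The genuine reason for the secondary index $2$ is the application: in the proofs of Lemma \ref{2dZK-LorStri-sol} and Theorem \ref{MainResult3}, the solution norm must be H\"older-paired against factors such as $|s|^{-\frac{2}{3}(1-\frac{2}{r})}$, which lie only in weak spaces $L^{a,\infty}_s$, and one needs the companion factor in a Lorentz space with finite secondary index so that the product is integrable in time. The Lorentz refinement strengthens a true estimate; it does not rescue a false one.

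Second, the appeal to Christ--Kiselev for \eqref{2dZK-Stri2} is both unjustified and unnecessary. Unjustified because the standard proof of the Christ--Kiselev lemma builds its Whitney/martingale decomposition from the additivity of $\|f\|_{L^{a}_t}^{a}$ over disjoint time intervals, and this additivity fails for the $L^{q',2}_t$ quasinorm; ``one checks that the argument tolerates the Lorentz time norms'' is precisely the step that would require a proof. Unnecessary because \eqref{2dZK-Stri2} is the diagonal retarded estimate (the pair on the right is dual to the pair on the left), so no factorization through $L^2$, and hence no truncation lemma, is needed: for each $t$,
\begin{align*}
\left\|\int_0^t U(t-s)g(s)\,ds\right\|_{L^p_{xy}}\lesssim \int_{\mathbb{R}} |t-s|^{-\frac{2}{q}}\,\|g(s)\|_{L^{p'}_{xy}}\,ds,
\end{align*}
and the monotonicity of the Lorentz quasinorm under pointwise domination, followed by the same O'Neil inequality you used for the full-line estimate, gives \eqref{2dZK-Stri2} directly. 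Replacing the Christ--Kiselev step by this two-line observation closes the only genuine gap in your write-up.
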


\subsection{Kato-Ponce commutator estimates} 

Kato-Ponce commutator estimates \cite{KP1988} play an important role in the well-posedness theory for the KdV equation.

\begin{lemma}[Kato-Ponce 1988, \cite{KP1988}]\label{KPlem:Leibniz}
	Let $s>0$ and $p\in (1,\infty)$. Then
	\begin{equation}
		\|J^s(f g)-f J^s g\|_{L^p}\lesssim \|g\|_{L^\infty}\|J^sf\|_{L^p}+\|\partial f\|_{L^\infty}\|J^{s-1}g\|_{L^p}. \label{KPlem:Leibniz-1}
	\end{equation}
\end{lemma}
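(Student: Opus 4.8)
The plan is to establish this as a pure harmonic-analysis statement, independent of the KdV flow. Writing $[J^s,f]g := J^s(fg)-f\,J^s g$ and passing to the Fourier side, the commutator is the bilinear Fourier multiplier with symbol $m(\xi,\eta)=(1+|\xi|^2)^{s/2}-(1+|\eta|^2)^{s/2}$, namely
\[
\mathscr{F}\big(J^s(fg)-f\,J^s g\big)(\xi) = \frac{1}{\sqrt{2\pi}}\int_{\mathbb{R}} m(\xi,\eta)\,\widehat{f}(\xi-\eta)\,\widehat{g}(\eta)\,d\eta.
\]
I would decompose this integral following Bony's paraproduct philosophy into three regimes, governed by the frequency $\zeta:=\xi-\eta$ of $f$ against the frequency $\eta$ of $g$: a high-low regime $|\zeta|\gg|\eta|$, a resonant high-high regime $|\zeta|\sim|\eta|$, and a low-high regime $|\zeta|\ll|\eta|$, realized by inserting Littlewood--Paley cutoffs $P_N$ on each factor and summing.

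In the high-low regime one has $|\xi|\sim|\zeta|$ and $(1+|\xi|^2)^{s/2}\sim(1+|\zeta|^2)^{s/2}$, so the order-$s$ weight sits naturally on the $f$-factor while $(1+|\eta|^2)^{s/2}$ is comparatively tame; reorganizing $m$ accordingly produces a bilinear operator whose symbol obeys the Coifman--Meyer (H\"ormander--Mikhlin type) differential bounds, and the Coifman--Meyer multiplier theorem yields a contribution $\lesssim \|J^s f\|_{L^p}\|g\|_{L^\infty}$ for every $p\in(1,\infty)$. The high-high regime I would treat by a direct Littlewood--Paley estimate: when both factors carry frequency $\sim N$ the output frequency is $\lesssim N$, so all $s$ derivatives may be placed on the $f$-factor via Bernstein's inequality, and after summing the almost-orthogonal pieces with the Fefferman--Stein vector-valued maximal inequality one again obtains a bound $\lesssim\|J^s f\|_{L^p}\|g\|_{L^\infty}$.

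The heart of the matter, and the step I expect to be the main obstacle, is the low-high regime, where the advertised gain of one derivative is produced. Here $|\zeta|\ll|\eta|$, hence $|\xi|\sim|\eta|$, and I would Taylor-expand the symbol in $\zeta$ about $\eta$,
\[
m(\xi,\eta)=(1+|\eta+\zeta|^2)^{s/2}-(1+|\eta|^2)^{s/2}= s\,\eta\,(1+|\eta|^2)^{\frac{s}{2}-1}\,\zeta + O\!\left(|\zeta|^2(1+|\eta|^2)^{\frac{s}{2}-1}\right).
\]
The leading term factors as a symbol carrying exactly one power of $\zeta$ acting on $f$ --- morally $\partial f$ --- against a symbol of order $s-1$ acting on $g$ --- morally $J^{s-1}g$ --- which is exactly the structure $\|\partial f\|_{L^\infty}\|J^{s-1}g\|_{L^p}$, while the quadratic remainder is even better behaved. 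The real work is to confirm that, once localized to $|\zeta|\ll|\eta|$, each factor of the reorganized symbol satisfies the Coifman--Meyer hypotheses uniformly in the dyadic parameters, and that the remainder of the expansion sums; this is delicate because the inhomogeneous weight $(1+|\xi|^2)^{s/2}$ does not scale cleanly and must be handled separately near $|\xi|\sim 1$, where $J^s$ departs qualitatively from the homogeneous operator $D^s$. Summing the three regimes gives \eqref{KPlem:Leibniz-1}.

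Should the multiplier bookkeeping in the low-high regime prove unwieldy, I would instead follow Kato and Ponce's original route, representing $J^s$ through the Bessel potential kernel and estimating the commutator kernel directly by a mean-value argument that again extracts one derivative onto $f$; both approaches terminate in the same two terms on the right-hand side of \eqref{KPlem:Leibniz-1}.
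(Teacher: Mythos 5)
The paper offers no proof of this lemma for you to be compared against: it is quoted as a classical result, with the citation \cite{KP1988} standing in for the argument, and it is then used as a black box (for instance in the proof of Lemma~\ref{2dZK-LorStri-sol} and in Section~5). Judged on its own terms, your sketch is a correct proof plan and follows what is by now the standard route---the same circle of ideas behind the fractional Leibniz rules of \cite{Li19} quoted in Lemma~\ref{lem:Leibnizpoly}. The key structural points are all right: the commutator is the bilinear multiplier with symbol $m(\xi,\eta)=(1+|\xi|^2)^{s/2}-(1+|\eta|^2)^{s/2}$; in the high-low and high-high regimes no cancellation between the two terms is needed, each separately contributing $\lesssim \|J^s f\|_{L^p}\|g\|_{L^\infty}$, with the high-high sum converging precisely because $s>0$; and the cancellation is exploited exactly where it must be, in the low-high regime, where both the leading term $s\,\eta\cdot\zeta\,(1+|\eta|^2)^{s/2-1}$ and the remainder $O\big(|\zeta|^2(1+|\eta|^2)^{s/2-1}\big)$ factor as one power of $\zeta$ on $f$ against an order $s-1$ weight on $g$ (using $|\zeta|\ll|\eta|$), which is the source of $\|\partial f\|_{L^\infty}\|J^{s-1}g\|_{L^p}$ in \eqref{KPlem:Leibniz-1}. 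You also correctly identify where the remaining labor sits: verifying the Coifman--Meyer bounds uniformly across the dyadic decomposition, treating frequencies $\lesssim 1$ separately because of the inhomogeneous weight, and carrying out the Fefferman--Stein summation; these are lengthy but routine verifications rather than gaps. Your approach is also close in spirit to Kato and Ponce's original argument, which likewise rests on Coifman--Meyer multiplier theory, though organized through estimates for operators with rough symbols rather than an explicit dyadic paraproduct decomposition.
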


\begin{lemma}[see Theorem 1 in \cite{KPV93}]\label{lem:Leibniz}
	Let $s\in(0,1)$ and $p\in (1,\infty)$. Then
	\begin{equation}
		\|D^s(f g)-f D^s g-g D^s f\|_{L^p(\mathbb{R})}\lesssim \|g\|_{L^\infty(\mathbb{R})}\|D^sf\|_{L^p(\mathbb{R})}. \label{lem:Leibniz-1}
	\end{equation}
Further more, 
	\begin{equation}
		\|D^s(f g)\|_{L^p(\mathbb{R})}\lesssim \|f D^s g\|_{L^p(\mathbb{R})} + \|g\|_{L^\infty(\mathbb{R})}\|D^s f\|_{L^p(\mathbb{R})}. \label{lem:Leibniz-12}
	\end{equation}
\end{lemma}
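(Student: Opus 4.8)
The plan is to reduce \eqref{lem:Leibniz-1} to a cancellation identity and then to a bilinear multiplier estimate handled by Littlewood--Paley analysis. First I would use the pointwise representation, valid for $s\in(0,1)$,
\[
D^s h(x)=c_s\,\mathrm{p.v.}\int_{\mathbb{R}}\frac{h(x)-h(y)}{|x-y|^{1+s}}\,dy,
\]
and apply it with $h=fg$, $h=f$ and $h=g$. Expanding $f(x)g(x)-f(y)g(y)$ and collecting terms, the unbounded diagonal contributions cancel and one is left with the symmetric bilinear form
\[
D^s(fg)-f\,D^s g-g\,D^s f=-c_s\,\mathrm{p.v.}\int_{\mathbb{R}}\frac{\big(f(x)-f(y)\big)\big(g(x)-g(y)\big)}{|x-y|^{1+s}}\,dy.
\]
This identity explains at once why the left-hand side is symmetric in $f$ and $g$, and it isolates the product of two differences that carries the needed gain. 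Note, however, that the naive triangle inequality bounding $|g(x)-g(y)|\le 2\|g\|_{L^\infty}$ leaves $\int |f(x)-f(y)|\,|x-y|^{-1-s}\,dy$, in which the signed cancellation defining $D^s f$ has been discarded; this is precisely the point where a more careful argument is required.

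To recover $L^p$ control by $\|D^s f\|_{L^p}$ I would pass to the frequency side, where the operator above is the bilinear multiplier with symbol $\sigma(\zeta,\eta)=|\zeta+\eta|^s-|\zeta|^s-|\eta|^s$, $\zeta$ and $\eta$ being the frequencies of $f$ and $g$. Decomposing both inputs by the projections $P_N$ and splitting into the high--low, low--high and high--high interactions, a Taylor expansion of $\sigma$ shows that in every region $|\sigma|$ is comparable to the $s$-th power of the \emph{smaller} of the two input frequencies. The decisive region is low--high, where $g$ carries the higher frequency: there $|\sigma|\sim|\zeta|^s$ is governed by the frequency of $f$, which is exactly what allows the factor to be reassigned to $f$ as $D^s f$ rather than to the high-frequency factor $g$. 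This is the frequency-space manifestation of the commutator gain.

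With this in hand I would, on each dyadic piece, factor out the appropriate power of the $f$-frequency to produce the corresponding piece $P_N D^s f$, leaving a bilinear operator whose symbol is homogeneous of degree zero and satisfies Coifman--Meyer bounds uniformly in the scale. Applying the Coifman--Meyer theorem in the form $L^p\times L^\infty\to L^p$ (using that the $P_N$ are bounded on $L^\infty$ uniformly in $N$) bounds each piece by $\|P_N D^s f\|_{L^p}\|g\|_{L^\infty}$, and summing over scales through the Littlewood--Paley square function and the Fefferman--Stein inequality returns $\|g\|_{L^\infty}\|D^s f\|_{L^p}$; the high--low and high--high contributions carry an extra summable factor and are controlled the same way. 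Finally, \eqref{lem:Leibniz-12} follows immediately by writing $D^s(fg)=\big(D^s(fg)-f\,D^sg-g\,D^sf\big)+f\,D^sg+g\,D^sf$ and estimating the commutator by \eqref{lem:Leibniz-1} and $g\,D^sf$ by H\"older's inequality, which gives
\[
\|D^s(fg)\|_{L^p}\lesssim \|f\,D^s g\|_{L^p}+\|g\|_{L^\infty}\|D^s f\|_{L^p}.
\]

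The main obstacle is that $\sigma$ is homogeneous of degree $s>0$ and therefore is not a Coifman--Meyer symbol globally; the whole difficulty lies in the low--high region, where one must exploit the cancellation $|\sigma|\sim|\zeta|^s$ to extract $D^s f$ from the low-frequency factor and then reassemble the dyadic pieces via square-function estimates without losing the sharp dependence $\|g\|_{L^\infty}\|D^s f\|_{L^p}$.
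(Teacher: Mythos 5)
Your reduction of \eqref{lem:Leibniz-12} to \eqref{lem:Leibniz-1} plus H\"older is fine, the pointwise difference identity is correct, and your size analysis of the symbol $\sigma(\zeta,\eta)=|\zeta+\eta|^s-|\zeta|^s-|\eta|^s$ (namely $|\sigma|\lesssim\min(|\zeta|,|\eta|)^s$ in each paraproduct region, with low--high the decisive one) is the right skeleton; note the paper itself gives no proof but cites Kenig--Ponce--Vega \cite{KPV93}, whose argument has exactly this structure. The genuine gap is your central step: after "factoring out the appropriate power of the $f$-frequency," the resulting symbol does \emph{not} satisfy Coifman--Meyer bounds uniformly, and it fails precisely in the low--high region you call decisive. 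Indeed $m(\zeta,\eta)=\sigma(\zeta,\eta)|\zeta|^{-s}$ is homogeneous of degree zero but singular at $\zeta=0$: for $|\zeta|\ll|\eta|$ one computes
\begin{equation*}
|\partial_\zeta m(\zeta,\eta)|\sim |\eta|^{s-1}|\zeta|^{-s}=\Bigl(\tfrac{|\eta|}{|\zeta|}\Bigr)^{s}\,|\eta|^{-1},
\end{equation*}
which exceeds the Coifman--Meyer requirement $|\partial^{\alpha}m|\lesssim(|\zeta|+|\eta|)^{-|\alpha|}$ by the unbounded factor $(|\eta|/|\zeta|)^{s}$. Nor does it help to freeze $|\zeta|\sim N$ on a dyadic block and divide by the constant $N^{s}$: then $|\partial_\zeta(\sigma N^{-s})|\sim N^{-1}$ rather than $(N+|\eta|)^{-1}$, i.e.\ the block symbol obeys only Marcinkiewicz (product-type) bounds, and bilinear Marcinkiewicz multipliers are not bounded in general. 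Moreover your size bound $|\sigma|\lesssim|\zeta|^{s}$ carries no smallness in $N/M$ (the term $-|\zeta|^{s}$ in $\sigma$ saturates it), so there is no geometric factor with which to sum the blocks, and Fefferman--Stein alone cannot rescue the sum because the Littlewood--Paley square function of $g\in L^{\infty}$ need not lie in $L^{\infty}$.

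What the actual proof does at this point is to \emph{split} the symbol in the low--high region instead of dividing by $|\zeta|^{s}$: write $\sigma=-|\zeta|^{s}+\bigl(|\zeta+\eta|^{s}-|\eta|^{s}\bigr)$. The first piece reassembles into the paraproduct $-\sum_j S_{j-3}(D^{s}f)\,\Delta_j g$, whose symbol $\sum_j \Phi_{j-3}(\zeta)\phi_j(\eta)$ genuinely is of Coifman--Meyer/Calder\'on--Zygmund type, giving the bound $\|D^{s}f\|_{L^{p}}\|g\|_{L^{\infty}}$ by classical paraproduct theory. For the second piece the mean value theorem gives the \emph{full-derivative} gain $\bigl||\zeta+\eta|^{s}-|\eta|^{s}\bigr|\lesssim|\zeta|\,|\eta|^{s-1}$, i.e.\ a factor $(N/M)^{1-s}$ on the block $|\zeta|\sim N\ll M\sim|\eta|$ after converting $N\|P_N f\|_{L^p}\sim N^{1-s}\|P_N D^s f\|_{L^p}$; this geometric decay, combined with the frequency localization of the outputs at scale $M$, is what lets Schur's test and the square-function argument close the double dyadic sum without losing $\|g\|_{L^{\infty}}\|D^{s}f\|_{L^{p}}$. (See also \cite{Li19} for a refined treatment.) In short, your proposal correctly isolates the cancellation but stops exactly where the work begins: extracting $D^{s}f$ by brute division destroys the Coifman--Meyer structure, and the missing ingredient is the paraproduct-plus-mean-value splitting that restores it.
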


We need the following fractional Leibniz rule that includes the end-point situation.

\begin{lemma}[see Theorem 1.2 in \cite{Li19}]\label{lem:Leibnizpoly}
	Let $s>0$, $1\leq p<\infty$ and  $1< p_1,  p_2<\infty$ with $1/p=1/p_1+1/p_2$. Then for any $s_1, s_2 \geq 0$ with $s_1+ s_2=s$, and any
$f, g \in \mathcal{S}(\mathbb{R}^d)$, the following inequality holds:
	\begin{equation}
		\Big\|D^s(f g)-\sum_{|\alpha|\leq s_1}\frac{1}{\alpha!}\partial_x^{\alpha}f D^{s,\alpha} g-\sum_{|\beta|\leq s_2}\frac{1}{\beta!}\partial_x^{\beta}g D^{s,\beta} f\Big\|_{L^p(\mathbb{R}^d)}\lesssim \|D^{s_1}f\|_{L^{p_1}(\mathbb{R}^d)} \|D^{s_2}g\|_{L^{p_2}(\mathbb{R}^d)}  \label{lem:Leibnizpoly1}
	\end{equation}
where the operator $D^{s,\alpha}$ is defined via Fourier transform  as $$\widehat{D^{s,\alpha}g}(\xi)=i^{-|\alpha|}\partial_{\xi}^{\alpha}|\xi|^s .$$ In particular, for $0<s<1$, by taking $s_1=0$ and $p=1$, one has
	\begin{equation}
		\big\|D^s(f g)\big\|_{L^1(\mathbb{R}^d)}\lesssim \big\|g D^{s} f\big\|_{L^1(\mathbb{R}^d)}+\|f\|_{L^{2}(\mathbb{R}^d)} \|D^{s}g\|_{L^{2}(\mathbb{R}^d)}.  \label{lem:Leibnizpoly2}
	\end{equation}
\end{lemma}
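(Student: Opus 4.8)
The plan is to read \eqref{lem:Leibnizpoly1} as an estimate for a single bilinear Fourier multiplier and to prove it by a paraproduct decomposition combined with the Coifman--Meyer multiplier theorem. First I would identify the symbols of all three terms. Writing $m(\xi_1,\xi_2)=|\xi_1+\xi_2|^s$ for the bilinear symbol of $(f,g)\mapsto D^s(fg)$, a direct computation using $\widehat{\partial_x^\alpha f}(\xi_1)=i^{|\alpha|}\xi_1^\alpha\widehat f(\xi_1)$ and $\widehat{D^{s,\alpha}g}(\xi_2)=i^{-|\alpha|}(\partial^\alpha|\cdot|^s)(\xi_2)\widehat g(\xi_2)$ shows that the powers of $i$ cancel and that $\sum_{|\alpha|\le s_1}\frac{1}{\alpha!}\partial_x^\alpha f\,D^{s,\alpha}g$ has bilinear symbol $\sum_{|\alpha|\le s_1}\frac{1}{\alpha!}\xi_1^\alpha(\partial^\alpha|\cdot|^s)(\xi_2)$, which is exactly the Taylor polynomial of $\xi_1\mapsto|\xi_1+\xi_2|^s$ about $\xi_1=0$ truncated at order $s_1$; symmetrically, the $\beta$-sum is the Taylor polynomial in $\xi_2$ about $\xi_2=0$. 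Thus the left side of \eqref{lem:Leibnizpoly1} is the bilinear operator $T_\sigma(f,g)$ with symbol $\sigma=m$ minus these two Taylor polynomials, and the goal reduces to $\|T_\sigma(f,g)\|_{L^p}\lesssim\|D^{s_1}f\|_{L^{p_1}}\|D^{s_2}g\|_{L^{p_2}}$.

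Next I would Littlewood--Paley decompose both inputs and split the frequency interactions into the low-high sector $|\xi_1|\ll|\xi_2|$, the high-low sector $|\xi_1|\gg|\xi_2|$, and the diagonal $|\xi_1|\sim|\xi_2|$. In the low-high sector the output frequency $\xi_1+\xi_2$ is comparable to the large frequency $\xi_2$, so $m$ is smooth there; subtracting the $\alpha$-Taylor polynomial leaves a remainder obeying $|\sigma|\lesssim|\xi_1|^{s_1}|\xi_2|^{s_2}$ with the matching derivative bounds. Placing $D^{s_1}$ on $f$ and $D^{s_2}$ on $g$ (using $s=s_1+s_2$), the residual bilinear multiplier satisfies the Coifman--Meyer estimates on this sector, hence maps $L^{p_1}\times L^{p_2}\to L^p$ by the bilinear Coifman--Meyer theorem, whose hypotheses $1<p_1,p_2<\infty$, $1/p=1/p_1+1/p_2$, $1\le p<\infty$ are precisely those of the statement. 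The high-low sector is identical after exchanging $f$ and $g$ and using the $\beta$-polynomial, and the Taylor polynomials localized to the opposite sectors and to the diagonal are harmless because their factors are then smooth and of the correct size. The borderline integer orders of $s_1,s_2$ and the smoothness of the quotient symbols near $\xi_1=0$ or $\xi_2=0$ require the customary extra care, but cause no loss.

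The main obstacle is the diagonal sector $|\xi_1|\sim|\xi_2|=:N$, where the output frequency $L=|\xi_1+\xi_2|$ may be far smaller than $N$ owing to cancellation, so that $m=|\xi_1+\xi_2|^s$ is evaluated near the singularity of $|\cdot|^s$ at the origin and the Taylor corrections no longer regularize it. Here I would decompose dyadically in the output scale $L\le N$, write $D^s$ on each output block as $L^s$ times a smooth normalized multiplier, and distribute $s=s_1+s_2$ as $N^{s_1}$ on $f$ and $N^{s_2}$ on $g$, which produces a gain of $(L/N)^s$ per block. Summing over $L\le N$ and then over $N$ is where the endpoint $p=1$ demands care, since the Littlewood--Paley square function is unavailable at $L^1$; instead I would sum absolutely, converting the geometric gain $(L/N)^s$ and the frequency separation into summable factors via Bernstein's inequality. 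This yields the product bound for the diagonal piece as well and completes \eqref{lem:Leibnizpoly1}.

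Finally I would deduce the endpoint corollary \eqref{lem:Leibnizpoly2}. Taking $s_1=0$, $p=1$, $p_1=p_2=2$ collapses the $\alpha$-sum to its single term $f\,D^sg$, and since $0<s_2=s<1$ the $\beta$-sum collapses to $g\,D^sf$, so \eqref{lem:Leibnizpoly1} reads $\|D^s(fg)-f D^sg-g D^sf\|_{L^1}\lesssim\|f\|_{L^2}\|D^sg\|_{L^2}$. The triangle inequality together with the Cauchy--Schwarz bound $\|f D^sg\|_{L^1}\le\|f\|_{L^2}\|D^sg\|_{L^2}$ then gives $\|D^s(fg)\|_{L^1}\lesssim\|g D^sf\|_{L^1}+\|f\|_{L^2}\|D^sg\|_{L^2}$, which is \eqref{lem:Leibnizpoly2}.
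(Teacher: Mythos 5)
The paper supplies no proof of \eqref{lem:Leibnizpoly1} at all: the lemma is quoted verbatim as Theorem 1.2 of \cite{Li19}, and the only argument the paper actually contains is the passage from \eqref{lem:Leibnizpoly1} to \eqref{lem:Leibnizpoly2} --- specialize to $s_1=0$, $s_2=s\in(0,1)$, $p=1$, $p_1=p_2=2$, note that both Taylor sums collapse to their zeroth-order terms $fD^sg$ and $gD^sf$, and finish with the triangle inequality and Cauchy--Schwarz on $\|fD^sg\|_{L^1}$. Your final paragraph reproduces exactly this deduction, correctly. The bulk of your proposal, however, is an attempt to reprove the cited theorem itself, and there the argument has genuine gaps at precisely the points that make \cite{Li19} a real theorem.

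Two concrete problems. First, on the off-diagonal sectors you invoke the Coifman--Meyer theorem for the Taylor-remainder symbol divided by $|\xi_1|^{s_1}|\xi_2|^{s_2}$, but this quotient is not a Coifman--Meyer symbol: on the sector $|\xi_1|\ll|\xi_2|$ its $\xi_1$-derivatives (coming from differentiating the factor $|\xi_1|^{-s_1}$ and the remainder, which are not smooth at $\xi_1=0$) are of size $|\xi_1|^{-|\gamma|}$ times the symbol, far larger than the required $(|\xi_1|+|\xi_2|)^{-|\gamma|}\sim|\xi_2|^{-|\gamma|}$. The known proofs must instead work block by block on $|\xi_1|\sim M\ll N\sim|\xi_2|$, extract a geometric gain $(M/N)^{\delta}$ from the remainder, and close the double sum by Schur-type or vector-valued (square-function plus maximal-function) estimates; labelling this ``customary extra care'' concedes the heart of the proof rather than supplying it. Second, your treatment of the diagonal sector at the endpoint $p=1$ by absolute summation does not cover the stated range of exponents: after summing the geometric series in the output frequency you need
\begin{equation*}
\sum_N N^{s_1}\|P_Nf\|_{L^{p_1}}\,N^{s_2}\|P_Ng\|_{L^{p_2}}\lesssim\|D^{s_1}f\|_{L^{p_1}}\|D^{s_2}g\|_{L^{p_2}},
\end{equation*}
and the Cauchy--Schwarz-in-$N$ route requires $\bigl(\sum_N\|P_Nh\|_{L^{q}}^2\bigr)^{1/2}\lesssim\|h\|_{L^{q}}$, which by Minkowski's inequality holds for $q\le2$ but fails for $q>2$ (test on a single bump of width $2^{-K}$, where the left side grows like $2^{K(1-2/q)}$). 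Since $p=1$ forces $1/p_1+1/p_2=1$, your diagonal argument closes only when $p_1=p_2=2$ --- fortunately the one case the paper uses --- not for the general exponents claimed in \eqref{lem:Leibnizpoly1}; moreover the leftover paraproduct pieces such as $\sum_N P_Nf\cdot P_{<N}(D^{s}g)$ cannot be summed termwise in $L^1$ either, and need the pointwise Cauchy--Schwarz/maximal-function argument that your ``sum absolutely'' dichotomy rules out. If your goal is only what the paper needs, the efficient route is the paper's own: cite \cite{Li19} for \eqref{lem:Leibnizpoly1} and keep your final paragraph for \eqref{lem:Leibnizpoly2}.
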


 \section{Dispersive decay for the mass-critical gKdV}\label{gKdVDispDecay}
This section is  devoted to show Theorem \ref{MainResult1} and Theorem \ref{MainResult2}.

{\bf{Proof of Theorem \ref{MainResult1}.}} By time-reversal symmetry, we only need to show the pointwise dispersive estimate \eqref{MainResult1a}  for $t>0$. For $T\in (0, \infty]$, denote
$$\|u \|_{X(T)}:=\sup_{t\in [0, T]}|t|^{\frac{1}{3}}\| u(t)\|_{L^{\infty}_{x}}+\sup_{t\in [0, T]}|t|^{\frac{1}{6}}\|D_x^{1/2} u(t)\|_{L^{\infty}_{x}}.$$

\noindent
$\bullet$ {\bf Estimate for  $\| u(t)\|_{L^{\infty}_{x}}$.}  
\vspace{1mm}

By Duhamel's principle, the solution to \eqref{gKdV} can be written as
\begin{align}
u(t)&=e^{-t\partial^3_x}u_0+\int_{0}^{t} e^{-(t-s)\partial^3_x}\partial_x u^5(s) ds \notag \\
&=e^{-t\partial^3_x}u_0+\int_{0}^{\frac{t}{2}} e^{-(t-s)\partial^3_x}\partial_x u^5(s) ds+\int_{\frac{t}{2}}^{t} e^{-(t-s)\partial^3_x}\partial_x u^5(s) ds. \label{gKdVsol}
	\end{align}
Using dispersive decay estimate \eqref{PrelDisDecay1} with $\alpha=0$ and $r=\infty$,  we see that the first term on RHS\eqref{gKdVsol}
can be controlled by:
\begin{align} \|e^{-t\partial^3_x}u_0\|_{L^{\infty}_x}\lesssim t^{-\frac{1}{3}}\|u_0\|_{L^{1}_x}. \label{DispDecayLinear}
\end{align}

Next, we estimate the last two term on RHS\eqref{gKdVsol} respectively. Applying  \eqref{PrelDisDecay2} with  $r=\infty$ and commutator estimate \eqref{lem:Leibnizpoly2},  we deduce that
\begin{align}
&\left\| \int_{0}^{\frac{t}{2}}e^{-(t-s)\partial^3_x}\partial_x u^5(s) ds \right\|_{L^{\infty}_{x}} \notag \\
\lesssim & \int_{0}^{\frac{t}{2}}|t-s|^{-\frac{1}{2}}\big\|D^{\frac{1}{2}}_x u^5(s) \big\|_{L^{1}_{x}}ds \notag \\
\lesssim &  \int_{0}^{\frac{t}{2}}|t-s|^{-\frac{1}{2}}\big\|u^4D^{\frac{1}{2}}_x u(s) \big\|_{L^{1}_{x}}ds+\int_{0}^{\frac{t}{2}}|t-s|^{-\frac{1}{2}}\|u\|_{L^{2}_{x}}\big\|D^{\frac{1}{2}}_xu^4\big\|_{L^{2}_{x}}ds.   \label{DispDecay0Ia1}
	\end{align}

On one hand, it follows from  \eqref{KPV-gKdV-GlobalscatteringA1}, \eqref{KPV-gKdV-GlobalscatteringA2} and \eqref{gKdVL(XT)Norm2b} that
\begin{align}
\int_{0}^{\frac{t}{2}}|t-s|^{-\frac{1}{2}}\big\|u^4D^{\frac{1}{2}}_x u \big\|_{L^{1}_{x}}ds  
\lesssim&  |t|^{-\frac{1}{2}} \int_{0}^{\frac{t}{2}}\big\|u^2D^{\frac{1}{2}}_x u \big\|_{L^{2}_{x}}\|u \|_{L^{2}_{x}}\|u \|_{L^{\infty}_{x}}ds  \notag \\
\lesssim& |t|^{-\frac{1}{2}} \|u \|_{X(T)}\int_{0}^{\frac{t}{2}}s^{-\frac{1}{3}}\big\|u^2D^{\frac{1}{2}}_x u \big\|_{L^{2}_{x}}\|u \|_{L^{2}_{x}}ds  \notag \\
\lesssim& |t|^{-\frac{1}{2}}\|u \|_{X(T)} \|u \|_{L^{\infty}_{t}L^{2}_{x}}\big\|u^2D^{\frac{1}{2}}_x u \big\|_{L^{2}_{tx}} \left(\int_{0}^{\frac{t}{2}}s^{-\frac{2}{3}}ds\right)^{\frac{1}{2}} \notag \\
\lesssim&	|t|^{-\frac{1}{3}}\|u \|_{X(T)} \|u \|_{L^{\infty}_{t}L^{2}_{x}}\| u\|^2_{L_{x}^{5} L_{t}^{10}}\big\|D_x^{\frac{1}{2}} u\big\|_{L_{x}^{10} L_{t}^{\frac{10}{3}}} \notag \\
\leq&	C(\|u_0\|_{L^2})|t|^{-\frac{1}{3}}\|u \|_{X(T)}. \label{DispDecay0Ia2}
	\end{align}
On the other hand, using ‌‌\eqref{lem:Leibniz-12} we have
\begin{align}
 &\int_{0}^{\frac{t}{2}}|t-s|^{-\frac{1}{2}}\|u\|_{L^{2}_{x}}\big\|D^{\frac{1}{2}}_xu^4\big\|_{L^{2}_{x}}ds \notag \\
\lesssim &  \int_{0}^{\frac{t}{2}}|t-s|^{-\frac{1}{2}}\|u\|_{L^{2}_{x}}\big\|u^3D^{\frac{1}{2}}_x u \big\|_{L^{2}_{x}}ds+\int_{0}^{\frac{t}{2}}|t-s|^{-\frac{1}{2}}\|u\|_{L^{2}_{x}}\|u\|_{L^{\infty}_{x}}\big\|D^{\frac{1}{2}}_xu^3\big\|_{L^{2}_{x}}ds  \label{DispDecay0Ia3}
	\end{align}
of which the first term on the right-hand side can be controlled by the same way as \eqref{DispDecay0Ia2}. For the second term on RHS\eqref{DispDecay0Ia3}, applying Gagliardo-Nirenberg inequality 
$$\big\|D^{\frac{1}{2}}_xf\big\|_{L^{2}_{x}}\leq \|f\|^{\frac{1}{2}}_{L^{2}_{x}}\big\|D_x f\big\|^{\frac{1}{2}}_{L^{2}_{x}}\leq \|f\|_{L^{2}_{x}}+\big\|D_x f\big\|_{L^{2}_{x}}$$
yields
\begin{align}
 &\int_{0}^{\frac{t}{2}}|t-s|^{-\frac{1}{2}}\|u\|_{L^{2}_{x}}\|u\|_{L^{\infty}_{x}}\big\|D^{\frac{1}{2}}_xu^3\big\|_{L^{2}_{x}}ds  \notag \\
\lesssim &|t|^{-\frac{1}{2}}\int_{0}^{\frac{t}{2}}\|u\|_{L^{2}_{x}}\|u\|_{L^{\infty}_{x}}\big\|u^3\big\|_{L^{2}_{x}}ds+|t|^{-\frac{1}{2}}\int_{0}^{\frac{t}{2}}\|u\|_{L^{2}_{x}}\|u\|_{L^{\infty}_{x}}\big\|D_xu^3\big\|_{L^{2}_{x}}ds  \notag \\
\lesssim &|t|^{-\frac{1}{2}}\|u \|^2_{X(T)} \|u \|_{L^{\infty}_{t}L^{2}_{x}}\|u \|^{2}_{L^{12}_{t}L^{4}_{x}}\left(\int_{0}^{\frac{t}{2}}s^{-\frac{4}{5}}ds\right)^{\frac{5}{6}}\notag \\
&\hspace{3mm}+|t|^{-\frac{1}{2}}\|u \|_{X(T)} \|u \|_{L^{\infty}_{t}L^{2}_{x}}\big\|u^2u_x\big\|_{L^{2}_{tx}} \left(\int_{0}^{\frac{t}{2}}s^{-\frac{2}{3}}ds\right)^{\frac{1}{2}} \notag \\
\lesssim  &|t|^{-\frac{1}{3}}\|u \|^2_{X(T)} \|u \|_{L^{\infty}_{t}L^{2}_{x}}\|u\|^2_{L^{12}_{t}L^{4}_{x}} +|t|^{-\frac{1}{3}}\|u \|_{X(T)} \|u \|_{L^{\infty}_{t}L^{2}_{x}}\|u \|^2_{L^{4}_{x}L^{\infty}_{t}}\|u_x\|_{L^{\infty}_{x}L^{2}_{t}} \notag \\
\leq&C(\|u_0\|_{L^{2}})|t|^{-\frac{1}{3}}\|u \|^2_{X(T)}+	C(\|u_0\|_{H^{\frac{1}{4}}})|t|^{-\frac{1}{3}}\|u \|_{X(T)}. \label{DispDecay0Ia4}
	\end{align}
Collecting   \eqref{DispDecay0Ia1}, \eqref{DispDecay0Ia2}, \eqref{DispDecay0Ia3} and \eqref{DispDecay0Ia4} deduces that
\begin{align}
\left\| \int_{0}^{\frac{t}{2}}e^{-(t-s)\partial^3_x}\partial_x u^5(s) ds \right\|_{L^{\infty}_{x}} 
\leq C(\|u_0\|_{H^{\frac{1}{4}}})|t|^{-\frac{1}{3}}\|u \|_{X(T)}+C(\|u_0\|_{L^{2}})|t|^{-\frac{1}{3}}\|u \|^2_{X(T)}.   \label{DispDecay0Ia5}
	\end{align}

Let us consider the final term on RHS\eqref{gKdVsol}. Using dispersive decay estimate \eqref{PrelDisDecay1} and commutator estimate \eqref{lem:Leibnizpoly2}, we get
\begin{align}
&\left\| \int_{\frac{t}{2}}^{t} e^{-(t-s)\partial^3_x}\partial_x u^5(s) ds \right\|_{L^{\infty}_{x}} \notag \\
\lesssim& \int_{\frac{t}{2}}^{t}|t-s|^{-\frac{1}{2}}\big\|D^{\frac{1}{2}}_x u^5 \big\|_{L^{1}_{x}}ds \notag \\
\lesssim &  \int_{\frac{t}{2}}^{t}|t-s|^{-\frac{1}{2}}\big\|u^4D^{\frac{1}{2}}_x u \big\|_{L^{1}_{x}}ds +\int_{\frac{t}{2}}^{t}|t-s|^{-\frac{1}{2}}\|u\|_{L^{2}_{x}}\big\|D^{\frac{1}{2}}_xu^4\big\|_{L^{2}_{x}}ds. \label{DispDecay0IIb0}
	\end{align}
 For the first term on RHS\eqref{DispDecay0IIb0},
\begin{align}
  \int_{\frac{t}{2}}^{t}|t-s|^{-\frac{1}{2}}\big\|u^4D^{\frac{1}{2}}_x u \big\|_{L^{1}_{x}}ds 
\lesssim &\int_{\frac{t}{2}}^{t}|t-s|^{-\frac{1}{2}}\|u \|^2_{L^{\infty}_{x}}\big\|D^{\frac{1}{2}}_xu\big\|_{L^{\infty}_{x}}\|u\|^2_{L^{2}_{x}}ds\notag \\
\lesssim &t^{-\frac5 6}\|u \|^3_{X(T)}\|u\|^2_{L^{\infty}_{t}L^{2}_{x}}\int_{\frac{t}{2}}^{t}|t-s|^{-\frac{1}{2}}ds\notag \\
\leq& C(\|u_0 \|_{L^{2}_{x}})t^{-\frac1 3}\|u \|^3_{X(T)}.\label{DispDecay0IIb1}
	\end{align}
 For the second term on RHS\eqref{DispDecay0IIb0}, by using commutator estimate one can get
\begin{align}
  \int_{\frac{t}{2}}^{t}|t-s|^{-\frac{1}{2}}\big\|u^4D^{\frac{1}{2}}_x u \big\|_{L^{1}_{x}}ds 
\lesssim &\int_{\frac{t}{2}}^{t}|t-s|^{-\frac{1}{2}}\|u \|^3_{L^{\infty}_{x}}\|u\|_{L^{2}_{x}}\big\|D^{\frac{1}{2}}_xu\big\|_{L^{2}_{x}}ds\notag \\
\lesssim &t^{-1}\|u \|^3_{X(T)}\|u\|_{L^{\infty}_{t}L^{2}_{x}}\big\|D^{\frac{1}{2}}_xu\big\|_{L^{\infty}_{t}L^{2}_{x}}\int_{\frac{t}{2}}^{t}|t-s|^{-\frac{1}{2}}ds\notag \\
\leq& C(\|u_0 \|_{H^{\frac{1}{2}}_{x}})t^{-\frac1 2}\|u \|^3_{X(T)}.\label{DispDecay0IIb2}
	\end{align}
Then, inserting \eqref{DispDecay0IIb1} and \eqref{DispDecay0IIb2} into \eqref{DispDecay0IIb0} we obtain 
\begin{align}
\left\| \int_{\frac{t}{2}}^{t} e^{-(t-s)\partial^3_x}\partial_x u^5(s) ds \right\|_{L^{\infty}_{x}} \leq  C(\|u_0 \|_{H^{\frac{1}{2}}})t^{-\frac1 3}\|u \|^3_{X(T)}. \label{DispDecay0IIb3}
	\end{align}
which together with \eqref{DispDecay0Ia5} and \eqref{DispDecayLinear} derives 
\begin{align}
\sup_{t\in [0, T]}|t|^{\frac{1}{3}}\| u(t)  \|_{L^{\infty}_{x}} \leq  \|u_0 \|_{L^{1}_{x}}+C(\|u_0\|_{H^{\frac{1}{2}}})\left(\|u \|_{X(T)}+\|u \|^2_{X(T)}+\|u \|^3_{X(T)}\right). \label{DispDecay0mgKdVf1}
	\end{align}

\noindent
$\bullet$ {\bf Estimate for  $\|D_x^{1/2} u(t)\|_{L^{\infty}_{x}}$.}  
\vspace{1mm}

Note that
\begin{align}
D_x^{1/2}u(t)=D_x^{1/2}e^{-t\partial^3_x}u_0+D_x^{1/2}\int_{0}^{t} e^{-(t-s)\partial^3_x}\partial_x u^5(s) ds, \label{gKdVsol-deri}
	\end{align}
the linear component is easy to control. To estimate the contribution of the nonlinear term, we decompose the region of integration into $[0, \frac{t}{2}]$ and $[\frac{t}{2}, t]$.

By using  \eqref{PrelDisDecay2} with  $r=\infty$,  we have
\begin{align}
\left\| D^{\frac{1}{2}}_x\int_{0}^{\frac{t}{2}}e^{-(t-s)\partial^3_x}\partial_x u^5(s) ds \right\|_{L^{\infty}_{x}} 
\lesssim & \int_{0}^{\frac{t}{2}}|t-s|^{-\frac{1}{2}}\big\|u^4 u_x\big\|_{L^{1}_{x}}ds \notag \\
\lesssim &  |t|^{-\frac{1}{2}}\int_{0}^{\frac{t}{2}}\|u\|_{L^{2}_{x}}\|u\|_{L^{\infty}_{x}}\big\|u^2 u_x\big\|_{L^{2}_{x}}ds \notag \\
\lesssim &  |t|^{-\frac{1}{2}}\|u\|_{L^{\infty}_{t}L^{2}_{x}}\|u \|_{X(T)}\big\|u^2 u_x\big\|_{L^{2}_{tx}}\left(\int_{0}^{\frac{t}{2}}s^{-\frac{2}{3}}ds\right)^{\frac{1}{2}}\notag \\
\lesssim &  |t|^{-\frac{1}{3}}\|u\|_{L^{\infty}_{t}L^{2}_{x}}\|u \|_{X(T)}\|u\|^2_{L^{4}_{x}L^{\infty}_{t}}\big\|u_x\big\|_{L^{\infty}_{x}L^{2}_{t}} \notag \\
\leq& C(\|u_0 \|_{H^{\frac{1}{4}}})t^{-\frac1 3}\|u \|_{X(T)}.   \label{DispDecay0Ia1-deri}
	\end{align}
Furthermore,
\begin{align}
&\left\| D^{\frac{1}{2}}_x\int_{\frac{t}{2}}^t e^{-(t-s)\partial^3_x}\partial_x u^5(s) ds \right\|_{L^{\infty}_{x}} \notag \\ 
\lesssim & \int_{\frac{t}{2}}^t|t-s|^{-\frac{1}{2}}\big\|u^4 u_x\big\|_{L^{1}_{x}}ds \notag \\
\lesssim &  \int_{\frac{t}{2}}^t|t-s|^{-\frac{3}{8}}\|u\|_{L^{2}_{x}}|t-s|^{-\frac{1}{8}}\big\|u^3 u_x\big\|_{L^{2}_{x}}ds \notag \\ 
\lesssim &   |t|^{\frac{1}{8}}\|u\|_{L^{\infty}_{t}L^{2}_{x}} \left(\int_{\frac{t}{2}}^t |t-s|^{-\frac{1}{4}}\big\|u^3 u_x\big\|^2_{L^{2}_{x}}ds\right)^{1/2} \notag \\
\lesssim &   |t|^{-\frac{5}{24}}\|u\|_{L^{\infty}_{t}L^{2}_{x}}\|u \|_{X(T)} \|u\|^2_{L^{\frac{24}{5}}_{x}L^{\infty}_{t}}\|u_x\|_{L^{12}_{x}L^{3}_{t}} \left(\int_{\frac{t}{2}}^t |t-s|^{-\frac{3}{4}}ds\right)^{1/6} \notag \\
\lesssim &   |t|^{-\frac{1}{6}}\|u\|_{L^{\infty}_{t}L^{2}_{x}}\|u \|_{X(T)} \|u\|^2_{L^{\frac{24}{5}}_{x}L^{\infty}_{t}}\|u_x\|_{L^{12}_{x}L^{3}_{t}} \notag \\
\leq& C(\|u_0 \|_{H^{\frac{5}{12}}})t^{-\frac1 6}\|u \|_{X(T)}.  \label{DispDecay0Ia2-deri}
	\end{align}
Validity of the last step in  \eqref{DispDecay0Ia2-deri} is from Lemma \ref{gKdVLNorm L245xLqqt} and \eqref{gKdVL(XT)Norm3}.

Then, it follows from \eqref{DispDecay0Ia1-deri} and \eqref{DispDecay0Ia2-deri} that
\begin{align}
\sup_{t\in [0, T]}|t|^{\frac{1}{6}}\|D^{\frac{1}{2}}_xu(t)  \|_{L^{\infty}_{x}} \leq  \|u_0 \|_{L^{1}_{x}}+C(\|u_0\|_{H^{\frac{5}{12}}})\|u \|_{X(T)}. \label{DispDecay0mgKdVf2}
	\end{align}
Combining \eqref{DispDecay0mgKdVf1} and \eqref{DispDecay0mgKdVf2}, we have 
\begin{align}
\|u \|_{X(T)} \leq  2\|u_0 \|_{L^{1}_{x}}+C(\|u_0\|_{H^{\frac{1}{2}}})\left(\|u \|_{X(T)}+\|u \|^2_{X(T)}+\|u \|^3_{X(T)}\right). \nonumber
	\end{align}
As $\|u_0\|_{H^{\frac{1}{2}}}$ is small enough, a simple continuity argument then yields the desired result \eqref{MainResult1a}. We complete the proof of this theorem. \hspace{66.2mm}$\square$

\vspace{1mm} In the next place, we show Theorem \ref{MainResult2}.

{\bf{Proof of Theorem \ref{MainResult2}.}} We only need to estimate the nonlinear term. Denote
$$\|u \|_{X(T)}:=\sup_{t\in (0,T]}|t|^{\frac{1}{3}}\| u(t)  \|_{L^{\infty}_{x}},$$ 
and we split the nonlinear part into $\int_0^t=\int_{0}^{\frac{t}{2}}+\int_{\frac{t}{2}}^t$.

For the first part we proceed as in \eqref{DispDecay0Ia1} to derive
\begin{align}
&\left\| \int_{0}^{\frac{t}{2}}e^{-(t-s)\partial^3_x}\partial_x u^{k+1}(s) ds \right\|_{L^{\infty}_{x}} \notag \\
\lesssim & \int_{0}^{\frac{t}{2}}|t-s|^{-\frac{1}{2}}\big\|D^{\frac{1}{2}}_x u^{k+1}(s) \big\|_{L^{1}_{x}}ds \notag \\
\lesssim &  \int_{0}^{\frac{t}{2}}|t-s|^{-\frac{1}{2}}\big\|u^kD^{\frac{1}{2}}_x u(s) \big\|_{L^{1}_{x}}ds+\int_{0}^{\frac{t}{2}}|t-s|^{-\frac{1}{2}}\|u\|_{L^{2}_{x}}\big\|D^{\frac{1}{2}}_xu^k\big\|_{L^{2}_{x}}ds.   \label{DispDecay0Ia1-k6}
	\end{align}
Thus, from  \eqref{KPV-gKdV-Global-kg4-A1} and \eqref{KPV-gKdV-Global-kg4-A2} it is easy to see 
\begin{align}
\int_{0}^{\frac{t}{2}}|t-s|^{-\frac{1}{2}}\big\|u^kD^{\frac{1}{2}}_x u \big\|_{L^{1}_{x}}ds  
\lesssim&  |t|^{-\frac{1}{2}} \int_{0}^{\frac{t}{2}}\big\|u^2D^{\frac{1}{2}}_x u \big\|_{L^{2}_{x}}\|u \|_{L^{2}_{x}}\|u \|^{k-3}_{L^{\infty}_{x}}ds  \notag \\
\lesssim& |t|^{-\frac{1}{2}} \|u \|_{X(T)}\|u \|^{k-4}_{L^{\infty}_{t}H^{\frac{1}{2}}_{x}}\int_{0}^{\frac{t}{2}}s^{-\frac{1}{3}}\big\|u^2D^{\frac{1}{2}}_x u \big\|_{L^{2}_{x}}\|u \|_{L^{2}_{x}}ds  \notag \\
\lesssim& |t|^{-\frac{1}{2}}\|u \|_{X(T)} \|u \|^{k-3}_{L^{\infty}_{t}H^{\frac{1}{2}}_{x}}\big\|u^2D^{\frac{1}{2}}_x u \big\|_{L^{2}_{tx}} \left(\int_{0}^{\frac{t}{2}}s^{-\frac{2}{3}}ds\right)^{\frac{1}{2}} \notag \\
\lesssim&	|t|^{-\frac{1}{3}}\|u \|_{X(T)} \|u \|^{k-3}_{L^{\infty}_{t}H^{\frac{1}{2}}_{x}}\| u\|^2_{L_{x}^{5} L_{t}^{10}}\big\|D_x^{\frac{1}{2}} u\big\|_{L_{x}^{10} L_{t}^{\frac{10}{3}}} \notag \\
\leq&	C(\|u_0\|_{H^{\frac{1}{2}}})|t|^{-\frac{1}{3}}\|u \|_{X(T)}. \label{DispDecay0Ia2-k6}
	\end{align}
Moreover, we obtain
\begin{align}
 &\int_{0}^{\frac{t}{2}}|t-s|^{-\frac{1}{2}}\|u\|_{L^{2}_{x}}\big\|D^{\frac{1}{2}}_xu^k\big\|_{L^{2}_{x}}ds \notag \\
 \lesssim  &|t|^{-\frac{1}{2}}\int_{0}^{\frac{t}{2}}\|u\|_{L^{2}_{x}}\|u^k\|_{L^{2}_{x}}ds+|t|^{-\frac{1}{2}}\int_{0}^{\frac{t}{2}}\|u\|_{L^{2}_{x}}\big\|D_xu^k\big\|_{L^{2}_{x}}ds  \notag \\
 \lesssim  &|t|^{-\frac{1}{2}}\|u \|_{X(T)} \|u \|^{k-3}_{L^{\infty}_{t}H^{\frac{1}{2}}_{x}}\|u\|^3_{L^{6}_{t}L^{\infty}_{x}} \left(\int_{0}^{\frac{t}{2}}s^{-\frac{2}{3}}ds\right)^{\frac{1}{2}}\notag \\
&\hspace{3mm}+|t|^{-\frac{1}{2}}\|u \|_{X(T)} \|u \|^{k-3}_{L^{\infty}_{t}H^{\frac{1}{2}}_{x}}\big\|u^2u_x\big\|_{L^{2}_{tx}} \left(\int_{0}^{\frac{t}{2}}s^{-\frac{2}{3}}ds\right)^{\frac{1}{2}} \notag \\
\lesssim  &|t|^{-\frac{1}{3}}\|u \|_{X(T)} \|u \|^{k-3}_{L^{\infty}_{t}H^{\frac{1}{2}}_{x}}\left(\|u\|^3_{L^{6}_{t}L^{\infty}_{x}} +\|u \|^2_{L^{4}_{x}L^{\infty}_{t}}\|u_x\|_{L^{\infty}_{x}L^{2}_{t}}\right) \notag \\
\leq&	C(\|u_0\|_{H^{\frac{1}{2}}})|t|^{-\frac{1}{3}}\|u \|_{X(T)}. \label{DispDecay0Ia3-k6}
	\end{align}
via Gagliardo-Nirenberg inequality. Collecting  \eqref{DispDecay0Ia1-k6}--\eqref{DispDecay0Ia3-k6} yields that
\begin{align}
\left\| \int_{0}^{\frac{t}{2}}e^{-(t-s)\partial^3_x}\partial_x u^{k+5}(s) ds \right\|_{L^{\infty}_{x}} 
\leq C(\|u_0\|_{H^{\frac{1}{2}}})|t|^{-\frac{1}{3}}\|u \|_{X(T)}.   \label{DispDecay0Ia4-k6}
	\end{align}

For the second part, using  dispersive inequality \eqref{PrelDisDecay1} with  $\alpha=0$, $r=\infty$ and commutator estimate \eqref{lem:Leibnizpoly2},  one has
\begin{align}
\left\| \int_{\frac{t}{2}}^{t} e^{-(t-s)\partial^3_x}\partial_x u^{k+1}(s) ds \right\|_{L^{\infty}_{x}}
\lesssim& \int_{\frac{t}{2}}^{t}|t-s|^{-\frac{1}{3}}\| u^k u_x\|_{L^{1}_{x}}ds \notag \\
\lesssim &  \int_{\frac{t}{2}}^{t}|t-s|^{-\frac{1}{3}}\| u\|_{L^{2}_{x}}\big\|u^{k-1}u_x\big\|_{L^{2}_{x}}ds \notag \\
\lesssim &  |t|^{-\frac{k-3}{3}}\|u \|^{k-3}_{X(T)}\| u\|_{L^{\infty}_{t}L^{2}_{x}} \big\|u^{2}u_x\big\|_{L^{2}_{xt}} \left(\int_{\frac{t}{2}}^{t}|t-s|^{-\frac{2}{3}}ds\right)^{1/2} 
\notag \\
\lesssim &  |t|^{\frac{1}{6}-\frac{k-3}{3}}\|u \|^{k-3}_{X(T)}\| u\|_{L^{\infty}_{t}L^{2}_{x}} \|u \|^2_{L^{4}_{x}L^{\infty}_{t}}\|u_x\|_{L^{\infty}_{x}L^{2}_{t}}\notag \\
\leq&	C(\|u_0\|_{H^{\frac{1}{2}}})|t|^{-\frac{1}{3}}\|u \|^{k-3}_{X(T)}. \label{DispDecay0IIb0-k6}
	\end{align}

Then, \eqref{DispDecay0Ia4-k6} and \eqref{DispDecay0IIb0-k6} immediately yield the desired estimate \eqref{MainResult2a}. This completes the proof. \hspace{128.5mm}$\square$

 \section{Dispersive decay for gZK in 2D}\label{ZK-DispDecay}                      

The IVP of the gZK equation
\begin{equation}
	\left\{
	\begin{aligned}
		&\partial_{t}u +\partial_{x}\Delta u + \partial_{x} u^{k+1} = 0, \\
		&u(0,x,\mathbf{y})=u_0(x,\mathbf{y}),\ \ \ (x, \mathbf{y})\in \mathbb{R}\times \mathbb{R}^{d-1}, \ t\in\mathbb{R}. \label{gZK} \\
	\end{aligned}
	\right.
\end{equation}
is considered, where $d\geq 2$, $k\in \mathbb{N}$, $\mathbf{y}=(y_1, y_2, \cdots, y_{d-1})$ and $\Delta=\partial_{x}^2+\partial_{y_1}^2+\cdots+\partial_{y_{d-1}}^2$ is the
Laplacian. When $k=1$, \eqref{gZK} is called the Zakharov-Kuznetsov equation introduced by Zakharov and Kuznetsov \cite{ZK74} as a model to describe the propagation of ion-sound waves in magnetic fields. The gZK equation \eqref{gZK} is a multi-dimensional extension of the gKdV equation \eqref{gKdV2}.

\eqref{gZK}  possesses the scaling symmetry, namely, if $u(t,x,\mathbf{y})$
solves \eqref{gZK}  with initial data $u_0(x,\mathbf{y})$, then
$$u_{\lambda}(t,x,\mathbf{y})=\lambda^{\frac{2}{k}}u(\lambda^3t,\lambda x,\lambda\mathbf{y}),  \hspace{10mm} \lambda>0$$
also solves \eqref{gZK} initial data $\lambda^{\frac{2}{k}}u_0(\lambda x,\lambda\mathbf{y})$. It follows from 
$$\|u_{\lambda}(t,x,\mathbf{y})\|_{\dot{H}^{s_c}(\mathbb{R}^{d})}=\|u(t,x,\mathbf{y})\|_{\dot{H}^{s_c}(\mathbb{R}^{d})}$$
that $s_c=\frac{d}{2}-\frac{2}{k}$. Hence, we call $\dot{H}^{s_c}$ the critical Sobolev space of  \eqref{gZK}. This equation has the conserved mass and energy:
\begin{align}
M(u(t))=\int_{\mathbb{R}^{d}}u^2(t)dxd\mathbf{y}=M(u_0), \nonumber
	\end{align}
\begin{align}
E(u(t))=\frac{1}{2}\int_{\mathbb{R}^{d}}|\nabla u (t)|^2dxd\mathbf{y}-\frac{1}{k+2}\int_{\mathbb{R}^{d}}u^{k+2}(t)dxd\mathbf{y}=E(u_0). \nonumber
	\end{align}

There are lots of works on the gZK equation \eqref{gZK}. In two-dimensional case, for $k\geq 3$ we refer to the papers \cite{FLP12, Gru15, LP11, RV12} on well-posedness.  Linares and Pastor \cite{LP11} proved that \eqref{gZK} is small data globally well-posed in $H^1(\mathbb{R}^{2})$ for $k\geq 3$ by using conservation laws mentioned above. Farah, Linares and Pastor  \cite{FLP12} considered  the large time behavior of the solution to \eqref{gZK} with $k\geq 3$. They obtained the decay and scattering results. Specifically, they showed that if $u_0\in H^1(\mathbb{R}^{2})\cap L^{\frac{2(k+1)}{2k+1}}(\mathbb{R}^{2})$ and  
$$\|u_0\|_{L^{\frac{2(k+1)}{2k+1}}}+\|u_0\|_{H^{1}}\ll 1,$$ 
then the global solution $u(t)$ to \eqref{gZK} satisfies
\begin{align}
\sup_{t\neq 0}(1+|t|)^{\frac{2k}{3(k+1)}}\| u(t)  \|_{L^{2(k+1)}_{xy}} \leq C. \nonumber
	\end{align}
Moreover, there exist $u_0^{\pm}\in H^1(\mathbb{R}^{2})$ such that
$$\lim_{t\to \pm \infty} \big\|u(t)-U(t)u_0^{\pm}\big\|_{H^1}=0.$$

By using the global well-posedness result of \cite{FLP12} and Lorentz-Strichartz estimates in Lemma \ref{2dZK-PrelStrichartz}, we get space-time bounds in mixed Lorentz spaces for solutions to the 2D gZK equation \eqref{gZK}. 

\begin{lemma}[Lorentz spacetime bounds] \label{2dZK-LorStri-sol}
Let $4< r \leq \infty$, $\frac{1}{r}+\frac{1}{r'}=1$, $k\in \mathbb{N}$. Assume that $u(t)$ is the global strong solution to \eqref{gZK} with small initial data $u_0$  satisfying
\begin{equation}
	\left\{
	\begin{aligned}
		&u_0\in H^{1}\cap L^{r'}, \hspace{3mm}  \|u_0\|_{H^{1}}\ll1,    \hspace{5mm}  \text{if} \hspace{3mm}  k=3,\\
		&u_0\in H^{2}\cap L^{r'}, \hspace{3mm} \|u_0\|_{H^{2}}\ll1,    \hspace{5mm}   \text{if} \hspace{3mm}  k\geq 4, \nonumber \\ 
	\end{aligned}
	\right.
\end{equation}
 then there exists a constant $C$ dependent on $u_0$, such that 
\begin{align}
\| u(t,x,y)\|_{L^{\frac{6r}{r+4},2}_{t}L^{\frac{4r}{r-4}}_{xy}} \leq C(\|u_0\|_{H^{1}_{xy}}),\hspace{5mm}  \text{if} \hspace{3mm}  k=3, \label{2dZK-LorStri-sol01} \\
\| u(t,x,y)\|_{L^{\frac{3(k-1)r}{r+4},2}_{t}L^{\frac{2(k-1)r}{r-4}}_{xy}} \leq C(\|u_0\|_{H^{2}_{xy}}),\hspace{5mm}  \text{if} \hspace{3mm}  k\geq 4. \label{2dZK-LorStri-sol02}
\end{align}
\end{lemma}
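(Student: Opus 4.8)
The plan is to begin from the Duhamel formula
\[
u(t)=U(t)u_0-\int_0^t U(t-s)\,\partial_x u^{k+1}(s)\,ds ,
\]
and to distinguish two regimes according to the admissibility defect of the target pair: writing $(q,p)$ for the time/space exponents on the left-hand side, one has $\frac{3}{q}+\frac{2}{p}=1$ when $k=3$ but $\frac{3}{q}+\frac{2}{p}=\frac{2}{k-1}<1$ when $k\geq 4$. Thus the case $k=3$ is exactly Strichartz-admissible, while for $k\geq4$ the missing time integrability must be produced by genuine dispersive decay.

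For $k=3$ I would apply the Lorentz--Strichartz estimate \eqref{2dZK-Stri1} directly to the linear term, giving $\|U(t)u_0\|_{L_t^{q,2}L_{xy}^p}\lesssim\|u_0\|_{L^2}\leq\|u_0\|_{H^1}$, and the inhomogeneous estimate \eqref{2dZK-Stri2} to the Duhamel term, keeping the derivative on the nonlinearity via $\partial_x U=U\partial_x$. This reduces matters to bounding $\|\partial_x u^4\|_{L_t^{q',2}L_{xy}^{p'}}$; writing $\partial_x u^4=4u^3\partial_x u$ and applying H\"older's inequality in the Lorentz scale in $t$ together with H\"older in $(x,y)$, I would distribute the factors onto admissible spacetime norms of $u$ and of $\partial_x u$. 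These norms are finite and controlled by $\|u_0\|_{H^1}$ thanks to the global $H^1$ well-posedness and scattering of Farah--Linares--Pastor \cite{FLP12}, which supplies global Strichartz bounds for both $u$ and $\nabla u$; the smallness of $\|u_0\|_{H^1}$ then closes the estimate.

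For $k\geq4$, Lemma \ref{2dZK-PrelStrichartz} no longer reaches the sub-admissible pair, so I would instead feed in the Farah--Linares--Pastor decay $\|u(t)\|_{L^{2(k+1)}_{xy}}\lesssim(1+|t|)^{-\gamma_*}$ with $\gamma_*=\frac{2k}{3(k+1)}$, mass conservation $\|u(t)\|_{L^2}=\|u_0\|_{L^2}$, and the uniform bound $\|u(t)\|_{H^2}\lesssim1$. Interpolating these by Gagliardo--Nirenberg propagates the decay to the target spatial exponent $p=\frac{2(k-1)r}{r-4}$, yielding $\|u(t)\|_{L^p_{xy}}\lesssim(1+|t|)^{-\gamma_p}$: for $r\geq2(k+1)$ (so $p\leq2(k+1)$) the pair $L^2,L^{2(k+1)}$ already gives $\gamma_p=\frac23(1-\frac2p)$, whereas for $4<r<2(k+1)$ one must first upgrade the endpoint to a \emph{decaying} sup bound, $\|u(t)\|_{L^\infty}\lesssim(1+|t|)^{-\frac{2k}{3(k+2)}}$, obtained by interpolating $L^{2(k+1)}$ against $\|D^2u\|_{L^2}$. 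This last step is precisely where the $H^2$ hypothesis enters.

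The decisive point is the \emph{strict} inequality $\gamma_p>\frac1q=\frac{r+4}{3(k-1)r}$; a direct computation confirms it for every $r>4$ and every $k\geq4$ in both regimes. Granting it, $t\mapsto\|u(t)\|_{L^p_{xy}}$ lies in $L_t^{q_-}\cap L_t^\infty$ for some $q_-$ with $1/\gamma_p<q_-<q$, and since the intersection embeds into the real interpolation space $(L^{q_-},L^\infty)_{\theta,2}=L^{q,2}$, I conclude $\|u\|_{L_t^{q,2}L_{xy}^p}<\infty$. I expect the main obstacle to be exactly the borderline regime $r\downarrow4$ (equivalently $p\to\infty$) for $k\geq4$: there the decay available from $L^{2(k+1)}$ alone is too weak, and one must extract additional decay through the $H^2$-based Gagliardo--Nirenberg interpolation and verify that $\gamma_p$ still strictly exceeds $1/q$. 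The secondary difficulty is the derivative loss in the nonlinear term when $k=3$, which is absorbed by H\"older in the Lorentz scale against the Farah--Linares--Pastor Strichartz norms.
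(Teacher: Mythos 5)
Your $k=3$ argument is essentially the paper's proof: Duhamel's formula, the Lorentz--Strichartz estimates \eqref{2dZK-Stri1}--\eqref{2dZK-Stri2} applied at the pair $(\tfrac{6r}{r+4},\tfrac{4r}{r-4})$ (admissible, since $\tfrac{3}{q}+\tfrac{2}{p}=1$), H\"older in the Lorentz scale on $u^3\partial_x u$, and smallness of $\|u_0\|_{H^1}$ to close a bootstrap. The only cosmetic difference is that the paper derives the auxiliary bound $\|u\|_{L^{3,2}_tL^\infty_{xy}}\leq C(\|u_0\|_{H^1})$ self-consistently by the same bootstrap, rather than invoking global bounds from \cite{FLP12}.

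Your $k\geq4$ argument, however, has a genuine gap. The entire chain of decay estimates is fed by the Farah--Linares--Pastor bound $\|u(t)\|_{L^{2(k+1)}_{xy}}\lesssim(1+|t|)^{-\frac{2k}{3(k+1)}}$, but that theorem requires $u_0\in H^1\cap L^{\frac{2(k+1)}{2k+1}}$ \emph{and} smallness of $\|u_0\|_{L^{\frac{2(k+1)}{2k+1}}}+\|u_0\|_{H^1}$. The lemma you are proving assumes only $u_0\in H^2\cap L^{r'}$ with $\|u_0\|_{H^2}\ll1$, where $r>4$ is a single fixed exponent, so $r'\in[1,\tfrac43)$ is fixed and in general unrelated to $\frac{2(k+1)}{2k+1}$; moreover no smallness of any Lebesgue norm of the data is assumed. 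Sobolev regularity alone produces no time decay even for the linear flow $U(t)u_0$, so under the stated hypotheses the exponent $\gamma_*$ you interpolate from is simply not available, and your (otherwise correct) real-interpolation step $L^{q_-}_t\cap L^\infty_t\hookrightarrow L^{q,2}_t$ has nothing to act on. Your diagnosis that the target pair is sub-admissible ($\tfrac3q+\tfrac2p=\tfrac{2}{k-1}<1$) is correct, but the conclusion that ``the missing time integrability must be produced by genuine dispersive decay'' is not: the paper cures the defect on the \emph{space} side. By Sobolev embedding in $\mathbb{R}^2$, $\|u\|_{L^{\frac{2(k-1)r}{r-4}}_{xy}}\lesssim\|J^{\sigma_k}u\|_{L^{\frac{2(k-1)r}{(k-2)r-4}}_{xy}}$ with $\sigma_k=1-\frac{2}{k-1}$, and the pair $\bigl(\tfrac{3(k-1)r}{r+4},\tfrac{2(k-1)r}{(k-2)r-4}\bigr)$ is exactly admissible. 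One then repeats the perturbative Lorentz--Strichartz bootstrap at regularity $\sigma_k$, controlling $\|J^{\sigma_k}(u^ku_x)\|_{L^{1,2}_tL^2_{xy}}$ via the Kato--Ponce inequality \eqref{KPlem:Leibniz-1} by powers of $\|u\|_{L^{3,2}_tL^\infty_{xy}}$ and $\|u\|_{L^\infty_tH^2_{xy}}$; this is precisely where the $H^2$ hypothesis enters, and no dispersive decay input is needed at all.
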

\begin{proof} Applying the Duhamel formula
$$u(t)=U(t)u_0+\int_0^t U(t-s)u^3\partial_x u(s) ds$$
together with Lorentz-Strichartz estimates \eqref{2dZK-Stri1}, \eqref{2dZK-Stri2} and H\"older inequality, we get
\begin{align}
\|u\|_{L^{3,2}_{t}L^{\infty}_{xy}}&\lesssim \|U(t)u_0\|_{L^{3,2}_{t}L^{\infty}_{xy}}+\left\|\int_0^t U(t-s)u^3 \partial_x u(s) ds\right\|_{L^{3,2}_{t}L^{\infty}_{xy}}\notag \\
&\lesssim \|u_0\|_{L^2_{xy}}+ \left\|u^3 \partial_x u\right\|_{L^{1,2}_{t}L^{2}_{xy}}  \notag \\
&\lesssim \|u_0\|_{L^2_{xy}}+ \left\|u\right\|^3_{L^{3,2}_{t}L^{\infty}_{xy}}
 \left\| \partial_x u\right\|_{L^{\infty}_{t}L^{2}_{xy}}.   \nonumber
	\end{align}

Note that 
$$ \left\| \partial_x u\right\|_{L^{\infty}_{t}L^{2}_{xy}}\leq \|u\|_{L^{\infty}_{t}H^{1}_{xy}}\lesssim \|u_0\|_{L^{\infty}_{t}H^{1}_{xy}}\ll 1,$$
then a standard bootstrap argument deduces
$$\|u\|_{L^{3,2}_{t}L^{\infty}_{xy}} \leq C(\|u_0\|_{H^{1}_{xy}}).$$

Using Lorentz-Strichartz estimates \eqref{2dZK-Stri1} and \eqref{2dZK-Stri2} again, one has
\begin{align}
\|u\|_{L^{\frac{6r}{r+4},2}_{t}L^{\frac{4r}{r-4}}_{xy}}&\lesssim \|U(t)u_0\|_{L^{\frac{6r}{r+4},2}_{t}L^{\frac{4r}{r-4}}_{xy}}+\left\|\int_0^t U(t-s)u^3 \partial_x u(s) ds\right\|_{L^{\frac{6r}{r+4},2}_{t}L^{\frac{4r}{r-4}}_{xy}}\notag \\
&\lesssim \|u_0\|_{L^2_{xy}}+ \left\|u^3 \partial_x u\right\|_{L^{1,2}_{t}L^{2}_{xy}}  \notag \\
&\lesssim \|u_0\|_{L^2_{xy}}+ \left\|u\right\|^3_{L^{3,2}_{t}L^{\infty}_{xy}}
 \left\| \partial_x u\right\|_{L^{\infty}_{t}L^{2}_{xy}}<\infty   \nonumber
	\end{align}
which gives the desired estimate \eqref{2dZK-LorStri-sol01}.

Similarly, if $k\geq 4$ then by Sobolev embedding inequality and Lorentz-Strichartz estimates we have
\begin{align}
\|u\|_{L^{\frac{3(k-1)r}{r+4},2}_{t}L^{\frac{2(k-1)r}{r-4}}_{xy}}&\lesssim \|J^{\sigma_k}u\|_{L^{\frac{3(k-1)r}{r+4},2}_{t}L^{\frac{2(k-1)r}{(k-2)r-4}}_{xy}}\notag \\
&\lesssim \|J^{\sigma_k}U(t)u_0\|_{L^{\frac{3(k-1)r}{r+4},2}_{t}L^{\frac{2(k-1)r}{(k-2)r-4}}_{xy}}\notag \\
&\hspace{5mm}+\left\| J^{\sigma_k}\int_0^t U(t-s)\partial_xu^{k+1}(s) ds\right\|_{L^{\frac{3(k-1)r}{r+4},2}_{t}L^{\frac{2(k-1)r}{(k-2)r-4}}_{xy}}\notag \\
&\lesssim \|u_0\|_{H^{\sigma_k}_{xy}}+ \left\|J^{\sigma_k}u^{k}u_x \right\|_{L^{1,2}_{t}L^{2}_{xy}}   \label{2dZK-LorStri-sol03}
	\end{align}
where $0<\sigma_k=1-\frac{2}{k-1}<1$.

Using Kato-Ponce inequality \eqref{KPlem:Leibniz-1} yields
\begin{align}
\left\|J^{\sigma_k}u^{k}u_x\right\|_{L^{2}_{xy}} \lesssim& \left\|u^{k}J^{\sigma_k}u_x\right\|_{L^{2}_{xy}}+\|\partial u^{k}\|_{L^{\infty}_{xy}}\left\|J^{\sigma_k}u\right\|_{L^{2}_{xy}}+\left\|J^{\sigma_k}u^{k}\right\|_{L^{2}_{xy}}\|u_x\|_{L^{\infty}_{xy}} \notag \\
 \lesssim& \|u\|^k_{L^{\infty}_{xy}}\left\|J^{\sigma_k}u_x\right\|_{L^{2}_{xy}}+\|\partial u\|_{L^{\infty}_{xy}}\|u\|^{k-1}_{L^{\infty}_{xy}}\left\|J^{\sigma_k}u\right\|_{L^{2}_{xy}} \notag \\
&+\big\|u^{k}\big\|_{L^{2}_{xy}}\|u_x\|_{L^{\infty}_{xy}}+\big\|\partial u^{k}\big\|_{L^{2}_{xy}}\|u_x\|_{L^{\infty}_{xy}}
\notag \\
 \lesssim& \|u\|^k_{L^{\infty}_{xy}}\|u\|_{H^{2}_{xy}}+\|u\|^{k-1}_{L^{\infty}_{xy}}\|u\|^2_{H^{2}_{xy}} \nonumber
	\end{align}
which further gives by global well-posedness that
\begin{align}
\left\|J^{\sigma_k}u^{k}u_x\right\|_{L^{1,2}_{t}L^{2}_{xy}} \lesssim &\|u\|^3_{L^{3}_{t}L^{\infty}_{xy}}\|u\|^{k-3}_{L^{\infty}_{txy}}\|u\|_{L^{\infty}_{t}H^{2}_{xy}}+\|u\|^{3}_{L^{3}_{t}L^{\infty}_{xy}}\|u\|^{k-4}_{L^{3}_{t}L^{\infty}_{xy}}\|u\|^2_{L^{\infty}_{t}H^{2}_{xy}} \notag \\
\lesssim &\|u\|^3_{L^{3}_{t}L^{\infty}_{xy}}\|u\|^{k-2}_{L^{\infty}_{t}H^{2}_{xy}}+\|u\|^{3}_{L^{3}_{t}L^{\infty}_{xy}}\|u\|^{k-2}_{L^{\infty}_{t}H^{2}_{xy}}<\infty . \label{2dZK-LorStri-sol04}
	\end{align}
Then, \eqref{2dZK-LorStri-sol02} follows from \eqref{2dZK-LorStri-sol03} and \eqref{2dZK-LorStri-sol04} immediately. This completes the proof of the lemma.
\end{proof}

With these estimates in hands, now let us turn to show Theorem \ref{MainResult3}.

{\bf{Proof of Theorem \ref{MainResult3}.}}
Denote
$$\|u \|_{X(T)}:=\sup_{t\in (0, T]}|t|^{\frac{2}{3}(1-\frac{2}{r})}\| u(t)  \|_{L^{r}_{xy}}.$$
We write down the Duhamel formula of $u$ and decompose the region of integration into $[0, \frac{t}{2}]$ and $[\frac{t}{2}, t]$ 
\begin{align}
u(t)&=U(t)u_0+\int_0^t U(t-s)u^k\partial_x u(s) ds  \notag \\
&=U(t)u_0+\int_0^{\frac{t}{2}} U(t-s)u^k\partial_x u(s) ds+\int_{\frac{t}{2}}^t  U(t-s)u^k\partial_x u(s) ds.  \nonumber
\end{align}

Firstly, we consider the case $k=3$. By dispersive estimate \eqref{2dZK-PrelDisDecay1}, the contribution of linear term  is easily seen to be acceptable:
$$\|U(t)u_0\|_{L^r_{xy}}\lesssim t^{-\frac{2}{3}(1-\frac{2}{r})}\|u_0\|_{L^{r'}_{xy}}.$$

To estimate the contribution of nonlinear term, we use dispersive estimate \eqref{2dZK-PrelDisDecay1}, Lorentz-Strichartz estimates and H\"older inequality in Lorentz spaces 
\begin{align}
&\left\| \int_0^{\frac{t}{2}}U(t-s) u^3 \partial_x u(s) ds \right\|_{L^{r}_{xy}} \notag \\ 
\lesssim& \int_{0}^{\frac{t}{2}}|t-s|^{-\frac{2}{3}(1-\frac{2}{r})}\big\|u^3 \partial_x u(s) \big\|_{L^{r'}_{xy}}ds\notag \\ 
 \lesssim& |t|^{-\frac{2}{3}(1-\frac{2}{r})} \int_{0}^{\frac{t}{2}}|s|^{-\frac{2}{3}(1-\frac{2}{r})} \|u \|_{X(T)}\|u_x\|_{L^{2}_{xy}} \|u  \|^{2}_{L^{\frac{4r}{r-4}}_{xy}}ds\notag \\ 
\lesssim&  |t|^{-\frac{2}{3}(1-\frac{2}{r})}\|u \|_{X(T)} \|u_x \|_{L^{\infty}_{t}L^{2}_{xy}}\|u\|^{2}_{L^{\frac{6r}{r+4},2}_{t}L^{\frac{4r}{r-4}}_{xy}}\big\||s|^{-\frac{2}{3}(1-\frac{2}{r})}\big\|_{L^{\frac{3r}{2(r-2)}, \infty}_s}\notag \\ 
\lesssim& C\big(\|u_0\|_{H^{1}_{xy}}\big) |t|^{-\frac{2}{3}(1-\frac{2}{r})}\|u \|_{X(T)},   \label{2dZK-DispDecay01}
	\end{align}
and
\begin{align}
&\left\| \int_{\frac{t}{2}}^t U(t-s) u^3 \partial_x u(s) ds \right\|_{L^{r}_{xy}} \notag \\ 
\lesssim& \int_{\frac{t}{2}}^t |t-s|^{-\frac{2}{3}(1-\frac{2}{r})}\big\|u^3 \partial_x u(s) \big\|_{L^{r'}_{xy}}ds\notag \\ 
 \lesssim& |t|^{-\frac{2}{3}(1-\frac{2}{r})}\|u \|_{X(T)} \int_{\frac{t}{2}}^t |t-s|^{-\frac{2}{3}(1-\frac{2}{r})} \|u_x\|_{L^{2}_{xy}} \|u  \|^{2}_{L^{\frac{4r}{r-4}}_{xy}}ds\notag \\ 
\lesssim&  |t|^{-\frac{2}{3}(1-\frac{2}{r})}\|u \|_{X(T)} \|u_x \|_{L^{\infty}_{t}L^{2}_{xy}}\|u\|^{2}_{L^{\frac{6r}{r+4},2}_{t}L^{\frac{4r}{r-4}}_{xy}}\big\||s|^{-\frac{2}{3}(1-\frac{2}{r})}\big\|_{L^{\frac{3r}{2(r-2)}, \infty}_s}\notag \\ 
\leq& C\big(\|u_0\|_{H^{1}_{xy}}\big) |t|^{-\frac{2}{3}(1-\frac{2}{r})}\|u \|_{X(T)}.   \label{2dZK-DispDecay02}
	\end{align}

The mixed Lorentz space improvement in  \eqref{2dZK-LorStri-sol01} is important to compensate for the fact that $|s|^{-\frac{2}{3}(1-\frac{2}{r})}$ is not in $L^{\frac{3r}{2(r-2)}}_s$ but lies in the Lorentz space $L^{\frac{3r}{2(r-2)}, \infty}_s$.

Secondly, for the case $k\geq 4$, proceeding directly as above yields
\begin{align}
&\left\|  \int_0^{\frac{t}{2}}U(t-s) u^k \partial_x u(s) ds \right\|_{L^{r}_{xy}} \notag \\ 
\lesssim&  \int_0^{\frac{t}{2}}|t-s|^{-\frac{2}{3}(1-\frac{2}{r})}\big\|u^k \partial_x u(s) \big\|_{L^{r'}_{xy}}ds\notag \\ 
 \lesssim& |t|^{-\frac{2}{3}(1-\frac{2}{r})}  \int_0^{\frac{t}{2}}|s|^{-\frac{2}{3}(1-\frac{2}{r})} \|u \|_{X(T)}\|u_x\|_{L^{2}_{xy}} \|u  \|^{k-1}_{L^{\frac{2(k-1)r}{r-4}}_{xy}}ds\notag \\ 
\lesssim&  |t|^{-\frac{2}{3}(1-\frac{2}{r})}\|u \|_{X(T)} \|u_x \|_{L^{\infty}_{t}L^{2}_{xy}}\|u\|^{k-1}_{L^{\frac{3(k-1)r}{r+4},2}_{t}L^{\frac{2(k-1)r}{r-4}}_{xy}}\big\||s|^{-\frac{2}{3}(1-\frac{2}{r})}\big\|_{L^{\frac{3r}{2(r-2)}, \infty}_s}\notag \\ 
\leq& C\big(\|u_0\|_{H^{2}_{xy}}\big) |t|^{-\frac{2}{3}(1-\frac{2}{r})}\|u \|_{X(T)}.   \nonumber
	\end{align}
The other part can be controlled by a similar way. Hence, we finish the proof.\hspace{13.8mm}$\square$

 \section{Energy-(sub)critical gZK in high dimensions}\label{hZK-DispDecay}
We consider three dimensional energy-critical ($s_c=d/2-2/k=1$ with $d=3, k=4$) gZK  and   
four dimensional energy-subcritical ($s_c>1$ with $d=4, k=3$) gZK  equation in this section. 

 Dispersive estimate for the linear ZK operator $e^{-t\partial_x\Delta}$ in higher dimensional ($d\geq 3$) is slightly different from that in two dimensional situation.

 
\begin{lemma}[Proposition 14 in \cite{Schippa20}] \label{hBOZK-PrelDisDecay}
Let $a\geq 1$, $d\geq 3$ and $\psi: \mathbb{R}^d\to \mathbb{R}$  be a smooth radial function
supported in $B_{d}(0,2)\setminus B_{d}(0,1/2)$. Then, we find the following estimate to hold
	\begin{align}
\left|\int_{\mathbb{R}^d} \psi(|\xi|)e^{i(t\xi_1|\xi|^a+x\cdot \xi)} d\xi \right|\leq C|t|^{-1} \label{hBOZK-PrelDisDecay1}
	\end{align}
with $C$ only depending on $d$, $\psi$ and $a$.
\end{lemma}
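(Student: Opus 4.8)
The plan is to regard the left-hand side of \eqref{hBOZK-PrelDisDecay1} as an oscillatory integral $I(t,x)=\int_{\mathbb{R}^d}\psi(|\xi|)e^{i\phi(\xi)}\,d\xi$ with phase $\phi(\xi)=t\,g(\xi)+x\cdot\xi$ and $g(\xi)=\xi_1|\xi|^a$, and to estimate it by the method of stationary phase. By the symmetry $t\mapsto -t$ we may take $t>0$, and the task becomes showing $|I(t,x)|\lesssim t^{-1}$ uniformly in $x$. The amplitude $\psi(|\xi|)$ is smooth and supported in the fixed annulus $\{1/2\le|\xi|\le 2\}$, so all derivatives of the amplitude and of $g$ are bounded there, and the entire difficulty is concentrated in the geometry of the phase $g$.

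The heart of the matter is the Hessian of $g$. Since $g(\xi)=\xi_1|\xi|^a$ is invariant under rotations fixing the $\xi_1$-axis, it suffices to analyze $\mathrm{Hess}\,g$ at a point of the form $\xi=(\xi_1,\rho,0,\dots,0)$ with $\rho=|\xi'|$, where $\xi'=(\xi_2,\dots,\xi_d)$. A direct computation shows that there $\mathrm{Hess}\,g$ is block diagonal: a $2\times 2$ block in the variables $(\xi_1,\rho)$, together with $d-2$ transverse eigenvalues all equal to $a\,\xi_1|\xi|^{a-2}$. One finds that the $2\times2$ block has determinant bounded away from $0$ whenever $|\xi_1|$ is small (as $\xi_1\to0$ its determinant tends to $-a^2|\xi|^{2a-2}$), while the transverse eigenvalues are bounded away from $0$ whenever $|\xi_1|$ is bounded below. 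Consequently, although the full $d\times d$ Hessian is degenerate both on $\{\xi_1=0\}$ and on a cone $\{|\xi_1|=c_a|\xi|\}$ where the $(\xi_1,\rho)$ block vanishes, at every point of the annulus there is a two-dimensional coordinate plane on which the restricted Hessian of $g$ is nondegenerate, with determinant bounded below by a constant depending only on $a$ and $d$.

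With this structural fact in hand I would cover the annulus by a finite partition of unity, subordinate to the two regimes above, so that on each piece a fixed pair of variables carries a nondegenerate Hessian block of $g$; one may first rotate in $\xi'$ so that $x'$ lies along the $\xi_2$-axis, which costs nothing since $g$ and $\psi$ are invariant under such rotations. On each piece I would split the integral according to the size of $|\nabla\phi|$: on the non-stationary region $|\nabla\phi|\gtrsim t$, repeated integration by parts in the direction of $\nabla\phi$ yields an $O(t^{-N})$ contribution for every $N$; on the complementary stationary region I would apply the two-dimensional van der Corput / stationary phase estimate in the good pair of variables, treating the remaining $d-2$ variables as parameters and integrating them trivially over the compact annulus. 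Two-variable stationary phase with nondegenerate Hessian produces the natural two-dimensional rate $|t|^{-2/2}=|t|^{-1}$, and the parameter integration contributes only a bounded factor, so this gives $|I(t,x)|\lesssim t^{-1}$ uniformly in $x$.

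The main obstacle is exactly the degeneracy just described: because the full Hessian drops rank on $\{\xi_1=0\}$ and on the cone, no better than a two-dimensional nondegeneracy is available, and this is precisely what pins the decay at the critical value $|t|^{-1}$ rather than the naive $|t|^{-d/2}$. The delicate work is therefore to verify, with uniform constants, that a nondegenerate $2\times2$ coordinate minor persists across both degenerate surfaces, so that the finite partition of unity can be chosen with $a,d$-dependent constants, and to make the two-dimensional stationary-phase bound uniform in the $d-2$ transverse parameters near these surfaces. An alternative route uses the rotational symmetry to integrate out the angular variables of $\xi'$ and reduce to a genuinely two-dimensional oscillatory integral, but there the residual cone degeneracy must be compensated by the decay of the resulting Bessel-type amplitude, which makes the bookkeeping heavier.
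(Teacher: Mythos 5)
A preliminary remark: the paper itself offers no proof of this lemma --- it is imported verbatim as Proposition 14 of \cite{Schippa20} --- so your argument can only be judged against the cited source and on its own terms.

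On its own terms, your strategy is sound, and the structural facts you assert about $g(\xi)=\xi_1|\xi|^a$ check out. At $\xi=(\xi_1,\rho,0,\dots,0)$, $\rho=|\xi'|$, the Hessian is block diagonal with transverse eigenvalues $a\xi_1|\xi|^{a-2}$ of multiplicity $d-2$, and the $(\xi_1,\rho)$ block has determinant $a^2|\xi|^{2a-2}F(s)$ with $s=\xi_1^2/|\xi|^2$ and $F(s)=s[3+(a-2)s][1+(a-2)(1-s)]-(1-s)[1+(a-2)s]^2$, so that $F(0)=-1$ and $F(1)=a+1$: the block is uniformly nondegenerate for $|\xi_1|$ small and its determinant does vanish on an intermediate cone, as you claim. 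Reducing to two-dimensional stationary phase in a good pair of variables, uniformly in the $d-2$ spectator variables and in $x$, with trivial integration over the remaining compact region, is a legitimate way to get $|t|^{-1}$; the uniform 2D stationary-phase estimate you need (constants depending only on a lower bound for the Hessian determinant and finitely many derivative bounds, with no assumption on the location or existence of critical points) is standard.

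The genuine gap is your justification of the good block in the regime $|\xi_1|\gtrsim 1$: you propose to take it among the transverse eigenvalues, but for $d=3$ there is exactly one transverse direction ($d-2=1$), so no such $2\times2$ block exists; and $d=3$ is both allowed by the lemma and the case actually needed for the 3D gZK equation in Section 5.1. The repair is easy and stays inside your scheme. In fixed coordinates, for any $2\le j<k\le d$ the $(\xi_j,\xi_k)$ block of $\mathrm{Hess}\,g$ at an arbitrary point has determinant
\begin{align*}
a^2\xi_1^2|\xi|^{2a-6}\bigl[|\xi|^2+(a-2)(\xi_j^2+\xi_k^2)\bigr]
\;\ge\;
\begin{cases}
a^2\xi_1^2|\xi|^{2a-4}, & a\ge 2,\\[1mm]
a^2\xi_1^4|\xi|^{2a-6}, & 1\le a<2,
\end{cases}
\end{align*}
where the second case uses $\xi_j^2+\xi_k^2\le|\xi|^2-\xi_1^2$. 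Hence on $\{|\xi_1|\ge\epsilon\}$ the fixed pair $(\xi_2,\xi_3)$ works (this is exactly where $d\ge 3$ enters, consistent with the fact that in $d=2$ only the weaker $|t|^{-2/3}$ decay of Lemma \ref{2dZK-PrelDisDecay} holds), while near $\{\xi_1=0\}$ your pair $(\xi_1,\xi_j)$, on a piece where $|\xi_j|\gtrsim 1$, works because its determinant tends to $-a^2\xi_j^2|\xi|^{2a-4}$; the preliminary rotation of $x'$ onto the $\xi_2$-axis then plays no role and can be dropped. With this correction your proof is complete. It is also genuinely different in style from the usual treatment of such anisotropic estimates (and, as far as I can tell, from the argument in \cite{Schippa20}), which follows the route you call alternative --- integrating out the angular variables of $\xi'$ and estimating a two-dimensional integral with Bessel-type amplitude; your block decomposition trades the Bessel asymptotics for partition bookkeeping and makes more transparent why the decay saturates at $|t|^{-1}$.
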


Interpolating dispersive estimate \eqref{hBOZK-PrelDisDecay1} (by taking $a=2$ for ZK) and conservation of mass implies 
	\begin{align}
\left\| U(t)P_1 u_0 \right\|_{L^r(\mathbb{R}^d)}\lesssim |t|^{-(1-\frac{2}{r})} \left\| \widetilde{P}_1 u_0 \right\|_{L^{r'}(\mathbb{R}^d)} \label{hBOZK-PrelDisDecay001}
	\end{align}
for $2\leq r \leq \infty$ and $d\geq 3$,  where  $1/ r +1/r'=1$ and $\widetilde{P}_N$ is defined in a similar way as $P_N$ but with the cut-off function equal to one on the support of $\chi$.

A scaling argument gives from  \eqref{hBOZK-PrelDisDecay001} that 
	\begin{align}
\left\| U(t)P_N u_0 \right\|_{L^r(\mathbb{R}^d)}\lesssim |t|^{-(1-\frac{2}{r})} N^{(d-3)(1-\frac{2}{r})}\left\| \widetilde{P}_N u_0 \right\|_{L^{r'}(\mathbb{R}^d)}, \nonumber
	\end{align}
and then by using Littlewood-Paley theory we get
	\begin{align}
\left\| U(t) u_0 \right\|_{L^r(\mathbb{R}^d)}\lesssim |t|^{-(1-\frac{2}{r})} 
\left\| (-\Delta)^{(d-3)(\frac{1}{2}-\frac{1}{r})}u_0 \right\|_{L^{r'}(\mathbb{R}^d)}  \label{hBOZK-PrelDisDecay002}
	\end{align}
for $2\leq r \leq \infty$, $1/ r +1/r'=1$ and $d\geq 3$. In view of \eqref{hBOZK-PrelDisDecay002},  Hardy-Littlewood-Sobolev inequality  help yield the following Lorentz-Strichartz estimates
	\begin{align}
\left\| U(t) u_0 \right\|_{L^{q,2} (\mathbb{R}; L^r(\mathbb{R}^d) )}\lesssim  
\| u_0 \|_{\dot{H}^{s}(\mathbb{R}^d)}  \label{hBOZK-PrelDisDecay003a}
	\end{align}
and
	\begin{align}
\left\|\int_0^t U(t-s) g(s)ds \right\|_{L^{q,2} (\mathbb{R}; L^r(\mathbb{R}^d) )}\lesssim  
\| (-\Delta)^{\frac{1}{2}(s+\tilde{s})}g \|_{L^{\tilde{q}',2} (\mathbb{R}; L^{\tilde{r}'}(\mathbb{R}^d) )}  \label{hBOZK-PrelDisDecay003b}
	\end{align}
where $2< q, r,\tilde{q},\tilde{r} < \infty$, $\frac{2}{q} +\frac{2}{r}=\frac{2}{\tilde{q}} +\frac{2}{\tilde{r}}=1$,  $d\geq 3$ and $s=d(\frac{1}{2}-\frac{1}{r})-\frac{3}{q}$, $\tilde{s}=d(\frac{1}{2}-\frac{1}{\tilde{r}})-\frac{3}{\tilde{q}}$.

\subsection{3D energy-critical gZK}

We are now ready to show dispersive estimate for solutions to the 3D energy-critical gZK equation.  \eqref{gZK} is small data globally well-posed in critical Sobolev space $H^{s_c}(\mathbb{R}^d)$ with $s_c=\frac{d}{2}-\frac{2}{k}$ for $d=2$ and $d=3$, see for example \cite{Gru15}.

{\bf{Proof of Theorem \ref{MainResult3}.}} First of all, let us establish global-in-time  bound in Lorentz-Strichartz norm for solutions the 3D energy-critical gZK equation \eqref{gZK}. By Duhamel's principle  
$$u(t)=U(t)u_0+\int_0^t U(t-s)u^4\partial_x u(s) ds, $$
using Lorentz-Strichartz estimates \eqref{hBOZK-PrelDisDecay003a} and \eqref{hBOZK-PrelDisDecay003b}, one gets
\begin{align}
\|Ju \|_{L^{\frac{3r}{2}, 2 }_tL^{\frac{6r}{3r-4}}_{x\mathbf{y}}}\leq &\|JU(t)u_0 \|_{L^{\frac{3r}{2}, 2 }_tL^{\frac{6r}{3r-4}}_{x\mathbf{y}}} +\left\|J\int_0^t U(t-s)u^4\partial_x u(s) ds\right\|_{L^{\frac{3r}{2}, 2 }_tL^{\frac{6r}{3r-4}}_{x\mathbf{y}}}  \notag \\ 
\lesssim& \|Ju_0 \|_{L^{2}_{x\mathbf{y}}}+\|Ju^4\partial_x u \|_{L^{\frac{4}{3},2}_tL^{\frac{4}{3}}_{x\mathbf{y}}} \notag \\ 
\lesssim& \|u_0 \|_{H^{1}_{x\mathbf{y}}}+\|u^4J\partial_x u \|_{L^{\frac{4}{3},2}_tL^{\frac{4}{3}}_{x\mathbf{y}}}+\big\|[J, u^4]\partial_x u \big\|_{L^{\frac{4}{3},2}_tL^{\frac{4}{3}}_{x\mathbf{y}}}.\label{3DgZKmrProof1}
	\end{align}
Observe that
\begin{align}
\|u^4J\partial_x u \|_{L^{\frac{3}{2},2}_tL^{\frac{6}{5}}_{x\mathbf{y}}}\lesssim  \|u \|^2_{L^{3, 2 }_tL_{x\mathbf{y}}^{6}}\|u \|^2_{L^{\infty}_{tx\mathbf{y}}}\|J\partial_x u \|_{L^{\infty}_tL^{2}_{x\mathbf{y}}}\lesssim  \|u \|^2_{L^{3, 2 }_tL^{6}_{x\mathbf{y}}}\| u \|^3_{L^{\infty}_tH^{2}_{x\mathbf{y}}}
\label{3DgZKmrProof2}
	\end{align}
and
\begin{align}
\|u \|_{L^{3, 2 }_tL^{6}_{x\mathbf{y}}}\leq &\|U(t)u_0 \|_{L^{3, 2 }_tL^{6}_{x\mathbf{y}}} +\left\|\int_0^t U(t-s)u^4\partial_x u(s) ds\right\|_{L^{3, 2 }_tL^{6}_{x\mathbf{y}}}  \notag \\ 
\lesssim& \|u_0 \|_{L^{2}_{x\mathbf{y}}}+\|u^4\partial_x u \|_{L^{\frac{3}{2},2}_tL^{\frac{6}{5}}_{x\mathbf{y}}} \notag \\ 
\lesssim&  \|u_0 \|_{L^{2}_{x\mathbf{y}}}+\|u \|^2_{L^{3, 2 }_tL_{x\mathbf{y}}^{6}}\|u \|^2_{L^{\infty}_{tx\mathbf{y}}}\|\partial_x u \|_{L^{\infty}_tL^{2}_{x\mathbf{y}}}
\notag \\ 
\lesssim&  \|u_0 \|_{L^{2}_{x\mathbf{y}}}+ \|u \|^2_{L^{3, 2 }_tL^{6}_{x\mathbf{y}}}\| u \|^3_{L^{\infty}_tH^{2}_{x\mathbf{y}}}.\label{3DgZKmrProof3}
	\end{align}

Choosing $\| u_0 \|_{H^{2}_{x\mathbf{y}}}\ll1$ such that
$$\| u \|_{L^{\infty}_tH^{2}_{x\mathbf{y}}}\leq 1/10,$$ 
a standard bootstrap argument yields from \eqref{3DgZKmrProof3} that
\begin{align}
\|u \|_{L^{3, 2 }_tL^{6}_{x\mathbf{y}}}\leq C\big(\|u_0 \|_{H^{2}_{x\mathbf{y}}}\big).\label{3DgZKmrProof4}
	\end{align}
So, by \eqref{3DgZKmrProof2} and \eqref{3DgZKmrProof4} we have
\begin{align}
\|u^4J\partial_x u \|_{L^{\frac{3}{2},2}_tL^{\frac{6}{5}}_{x\mathbf{y}}}\leq C\big(\|u_0 \|_{H^{2}_{x\mathbf{y}}}\big).\label{3DgZKmrProof5}
	\end{align}
Using Kato-Ponce commutator estimates, one can control the last term in RHS\eqref{3DgZKmrProof1}. Hence,
\begin{align}
\|Ju \|_{L^{\frac{3r}{2}, 2 }_tL^{\frac{6r}{3r-4}}_{x\mathbf{y}}}\leq C\big(\|u_0 \|_{H^{2}_{x\mathbf{y}}}\big).\label{3DgZKmrProof6}
	\end{align}

Next, we consider dispaersive decay. Denote
$$\|u \|_{X(T)}:=\sup_{t\in (0,T]}|t|^{1-\frac{2}{r}} \| u(t)  \|_{L^{r}(\mathbb{R}^3)},$$
and write
\begin{align}
u(t)=U(t)u_0+\int_0^{\frac{t}{2}} U(t-s)u^4\partial_x u(s) ds+\int_{\frac{t}{2}}^t U(t-s)u^4\partial_x u(s) ds.  \nonumber
\end{align}
By dispersive estimate \eqref{hBOZK-PrelDisDecay002}, it suffice to consider the nonlinear term. Taking use of dispersive estimate \eqref{hBOZK-PrelDisDecay002}, Sobolev embedding inequality and \eqref{3DgZKmrProof6} yields
\begin{align}
&\left\| \int_{0}^{\frac{t}{2}}U(t-s) u^4\partial_{x}u (s) ds \right\|_{L^{r}(\mathbb{R}^d)} \notag \\ 
\lesssim& \int_{0}^{\frac{t}{2}}|t-s|^{-(1-\frac{2}{r})}\big\|u^4\partial_{x}u(s) \big\|_{L^{r'}(\mathbb{R}^d)}ds\notag \\ 
 \lesssim& |t|^{-(1-\frac{2}{r})} \int_{0}^{\frac{t}{2}} \|u \|_{L^{r}(\mathbb{R}^d)} \|u \|^3_{L^{\frac{6r}{r-4}}(\mathbb{R}^d)}
\|\partial_{x}u\|_{L^{2}(\mathbb{R}^d)} ds\notag \\ 
\lesssim& |t|^{-(1-\frac{2}{r})} \|u \|_{X(T)}\|\partial_{x}u\|_{L^{\infty}_tL^{2}_{x\mathbf{y}}} 
 \|u \|^3_{L^{\frac{3r}{2}, 3 }_tL^{\frac{6r}{r-4}}_{x\mathbf{y}}}  \big\||s|^{-(1-\frac{2}{r})}\big\|_{L^{\frac{r}{r-2}, \infty }_s}\notag \\ 
\lesssim& |t|^{-(1-\frac{2}{r})} \|u \|_{X(T)}\|u\|_{L^{\infty}_tH^{1}_{x\mathbf{y}}} 
 \|Ju \|^3_{L^{\frac{3r}{2}, 2 }_tL^{\frac{6r}{3r-4}}_{x\mathbf{y}}} \notag \\
\leq & C\big(\|u_0 \|_{H^{2}_{x\mathbf{y}}}\big) |t|^{-(1-\frac{2}{r})} \|u \|_{X(T)}.
	\label{3DgZKmrProof7}\end{align}
 Arguing similarly, 
\begin{align}
\left\| \int_{\frac{t}{2}}^tU(t-s) u^4\partial_{x}u (s) ds \right\|_{L^{r}(\mathbb{R}^d)} 
\leq  C\big(\|u_0 \|_{H^{2}_{x\mathbf{y}}}\big) |t|^{-(1-\frac{2}{r})} \|u \|_{X(T)}.
	\label{3DgZKmrProof8}\end{align}

So, we have
$$ \|u \|_{X(T)}\leq  \|u_0 \|_{L^{r'}_{x\mathbf{y}}}+C\big(\|u_0 \|_{H^{2}_{x\mathbf{y}}}\big) |t|^{-(1-\frac{2}{r})} \|u \|_{X(T)}$$
which implies \eqref{MainResult3a} by choosing $\|u_0 \|_{H^{2}_{x\mathbf{y}}}\ll 1$. We finish the proof. 
\hspace{31mm}$\square$

\subsection{4D energy-subcritical gZK}

In this subsection, we consider four dimensional energy-subcritical gZK equation \eqref{gZK} with $k=3$. Herr and Kinoshita \cite{HeKi23} showed that \eqref{gZK} is global well-posed in $H^1(\mathbb{R}^4)$ under a smallness condition. 

We may utilize anisotropic Strichartz estimates derived from the $(d-1)$-dimensional Schr\"odinger equation by Herr and Kinoshita \cite{HeKi21} to study dispersive decay of solutions to  \eqref{gZK} when $d=4$ and $k=3$.

For $d\geq 1$, we say $(q,r)$ is Schr\"odinger $d$-admissible if
$$2 \leq q, r \leq \infty, \hspace{2mm} \frac{2}{q}+\frac{d}{r} = \frac{d}{2}, \hspace{2mm}(d,q,r)\neq (2,2,\infty).$$

\begin{lemma}[Theorem 1.2 in \cite{KT98}]  \label{hShr-PrelDisDecay}
Let $d\geq 1$, and $(q,r), (\tilde{q},\tilde{r})$  be Schr\"odinger $d$-admissible. Then, 
	\begin{align}
& \hspace{6mm} \left\|e^{it\Delta}u_0\right\|_{L^r_{x}} \lesssim
	|t|^{-d(\frac{1}{2}-\frac{1}{r})}	\|u_0\|_{L^{r'}_{x}}, \label{hShr-PrelDisDecay0} \\
&	\hspace{12mm} \left\|e^{it\Delta}u_0\right\|_{L_t^qL^r_{x}}\lesssim
		\|u_0\|_{L^2_{x}}, \label{hShr-PrelDisDecay1} \\
&	\left\|\int_{0}^{t} e^{i(t-s)\Delta}g(s, \cdot)ds\right\|_{L_t^qL^r_{x}}\lesssim
		\|g\|_{L^{\tilde{q}'}_tL^{\tilde{r}'}_{x}}. \label{hShr-PrelDisDecay2}
	\end{align}
where $\frac{1}{\tilde{q}}+\frac{1}{\tilde{q}'}=1$ and $\frac{1}{\tilde{r}}+\frac{1}{\tilde{r}'}=1$.
\end{lemma}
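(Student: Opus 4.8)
The plan is to establish the three estimates in the order stated, deriving the fixed-time dispersive bound \eqref{hShr-PrelDisDecay0} first and then feeding it into a $TT^*$ argument for the Strichartz inequalities \eqref{hShr-PrelDisDecay1} and \eqref{hShr-PrelDisDecay2}. For the dispersive estimate I would start from the explicit convolution kernel of the free Schr\"odinger propagator, $e^{it\Delta}u_0(x)=(4\pi i t)^{-d/2}\int_{\mathbb{R}^d} e^{i|x-y|^2/(4t)}u_0(y)\,dy$, which gives at once the endpoint bound $\|e^{it\Delta}u_0\|_{L^\infty_x}\lesssim |t|^{-d/2}\|u_0\|_{L^1_x}$. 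Combining this with the $L^2$ unitarity $\|e^{it\Delta}u_0\|_{L^2_x}=\|u_0\|_{L^2_x}$ and interpolating by Riesz--Thorin between $(L^1\to L^\infty)$ and $(L^2\to L^2)$ with parameter $\theta=1-2/r$ yields the decay rate $|t|^{-d(1/2-1/r)}$ in \eqref{hShr-PrelDisDecay0}.

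Next, for the homogeneous Strichartz estimate \eqref{hShr-PrelDisDecay1} I would invoke the standard $TT^*$ method. Writing $Tu_0=e^{it\Delta}u_0$, the bound $\|Tu_0\|_{L^q_tL^r_x}\lesssim\|u_0\|_{L^2_x}$ is dual to the boundedness of $TT^*F=\int_{\mathbb{R}}e^{i(t-s)\Delta}F(s)\,ds$ from $L^{q'}_tL^{r'}_x$ into $L^q_tL^r_x$. Applying \eqref{hShr-PrelDisDecay0} pointwise in $t$ gives $\|TT^*F(t)\|_{L^r_x}\lesssim\int_{\mathbb{R}}|t-s|^{-d(1/2-1/r)}\|F(s)\|_{L^{r'}_x}\,ds$, and the admissibility relation $\frac{2}{q}+\frac{d}{r}=\frac{d}{2}$ forces the kernel exponent $d(1/2-1/r)$ to equal $2/q$, which is exactly the value for which the Hardy--Littlewood--Sobolev inequality in the time variable closes the estimate; this handles all non-endpoint pairs ($q>2$). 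The inhomogeneous estimate \eqref{hShr-PrelDisDecay2} over the full line follows by composing the homogeneous estimate for $(q,r)$ with its dual for $(\tilde q,\tilde r)$ through $\int_{\mathbb{R}}e^{i(t-s)\Delta}g(s)\,ds=e^{it\Delta}\int_{\mathbb{R}}e^{-is\Delta}g(s)\,ds$, and the retarded truncation to $\int_0^t$ is then recovered from the Christ--Kiselev lemma whenever the relevant time exponents are strictly ordered.

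The hard part is the endpoint $q=2$, $r=2d/(d-2)$, present only for $d\geq3$ and thus relevant to the three-dimensional Schr\"odinger flow appearing in the 4D gZK analysis. There the fractional-integration kernel $|t-s|^{-1}$ sits exactly at the borderline, so Hardy--Littlewood--Sobolev fails and the $TT^*$ scheme above breaks down. Here I would follow the Keel--Tao bilinear argument \cite{KT98}: dyadically decompose the time separation into pieces $|t-s|\sim 2^{j}$, estimate each dyadic bilinear form by interpolating between the dispersive bound \eqref{hShr-PrelDisDecay0} at the endpoint exponent and the $L^2$ energy bound, and then sum the dyadic contributions by a real-interpolation (or atomic) argument that exploits the resulting off-diagonal decay to absorb the borderline logarithmic divergence. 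This summation step, rather than any single estimate, is the genuine obstacle, and the endpoint inhomogeneous estimate with mixed admissible pairs emerges from the very same bilinear analysis; note also that the pair $(d,q,r)=(2,2,\infty)$ is deliberately excluded in the admissibility definition precisely because this machinery genuinely fails there.
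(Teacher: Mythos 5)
Your proposal is correct and coincides with the approach of the source: the paper does not prove this lemma at all, but quotes it as Theorem 1.2 of Keel--Tao \cite{KT98}, and your sketch --- explicit kernel plus Riesz--Thorin for the dispersive bound \eqref{hShr-PrelDisDecay0}, $TT^*$ with Hardy--Littlewood--Sobolev in time for non-endpoint pairs, Christ--Kiselev for the retarded truncation, and the bilinear dyadic-summation argument at the endpoint $q=2$ --- is precisely the argument of that reference. Since the paper supplies no independent proof, there is nothing further to compare.
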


The ZK unitary group $e^{-t\partial_x\Delta}$ is closely related to the  Schr\"odinger unitary group $e^{it\Delta}$.
\begin{lemma} \label{4hZK-PrelDisDecay}
Let $d\geq 2$, and $2\leq  r \leq \infty$. Then, we have
	\begin{align}
\left\|D_{x}^{(d-1)(\frac{1}{2}-\frac{1}{r})}U(t)u_0\right\|_{L^r_{\mathbf{y}}L^2_{x}} \lesssim
	|t|^{-(d-1)(\frac{1}{2}-\frac{1}{r})}	\|u_0\|_{L^{r'}_{\mathbf{y}}L^2_{x}}.\label{4hZK-PrelDisDecay1}
	\end{align}
\end{lemma}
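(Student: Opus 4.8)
The plan is to exploit the factorization that arises from taking a partial Fourier transform in the distinguished variable $x$, which turns $U(t)=e^{-t\partial_x\Delta}$ into a family of $(d-1)$-dimensional Schr\"odinger propagators in $\mathbf{y}$, parametrized by the dual frequency $\xi$. With the paper's convention for $\widehat{f}(\xi,\eta)$, the full multiplier of $U(t)$ is $e^{it\xi(\xi^2+|\eta|^2)}$; hence, writing $\mathscr{F}_x$ for the Fourier transform in $x$ alone and $v_\xi:=\mathscr{F}_x u_0(\xi,\cdot)$, one has
\begin{align}
\mathscr{F}_x[U(t)u_0](\xi,\cdot)=e^{it\xi^3}\,e^{-it\xi\Delta_{\mathbf{y}}}v_\xi, \nonumber
\end{align}
so for each fixed $\xi$ the evolution in $\mathbf{y}$ is precisely a Schr\"odinger flow run for time $\tau=-t\xi$. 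The key structural point, and the reason the fractional derivative $D_x^{s}$ with $s=(d-1)(\tfrac12-\tfrac1r)$ appears, is that the $(d-1)$-dimensional dispersive rate $|\tau|^{-(d-1)(1/2-1/r)}=|t|^{-s}|\xi|^{-s}$ carries a factor $|\xi|^{-s}$ which is designed to be absorbed exactly by $|\xi|^{s}=\widehat{D_x^s}$.

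First I would apply Plancherel in $x$ to the inner $L^2_x$ norm (legitimate for each fixed $\mathbf{y}$, since it is scalar-valued), turning $\|D_x^sU(t)u_0\|_{L^r_{\mathbf{y}}L^2_x}$ into $\big\|\,\||\xi|^s e^{-it\xi\Delta_{\mathbf{y}}}v_\xi\|_{L^2_\xi}\big\|_{L^r_{\mathbf{y}}}$, the unimodular phase $e^{it\xi^3}$ dropping out. Since $r\geq 2$, Minkowski's integral inequality lets me interchange the norms, bounding this by $\big(\int |\xi|^{2s}\|e^{-it\xi\Delta_{\mathbf{y}}}v_\xi\|_{L^r_{\mathbf{y}}}^2\,d\xi\big)^{1/2}$. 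For each fixed $\xi$ I then invoke the Schr\"odinger dispersive estimate \eqref{hShr-PrelDisDecay0} of Lemma \ref{hShr-PrelDisDecay} in dimension $d-1$ with time $\tau=t\xi$; the resulting $|\xi|^{-s}$ cancels $|\xi|^{s}$ exactly, leaving $|t|^{-s}\big(\int\|v_\xi\|_{L^{r'}_{\mathbf{y}}}^2\,d\xi\big)^{1/2}=|t|^{-s}\|\mathscr{F}_x u_0\|_{L^2_\xi L^{r'}_{\mathbf{y}}}$.

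It then remains to recognise the right-hand side as $\|u_0\|_{L^{r'}_{\mathbf{y}}L^2_x}$. Because $r'\leq 2$, a second application of Minkowski's inequality gives $\|\mathscr{F}_x u_0\|_{L^2_\xi L^{r'}_{\mathbf{y}}}\leq \|\mathscr{F}_x u_0\|_{L^{r'}_{\mathbf{y}}L^2_\xi}$, and Plancherel in $x$ (again for fixed $\mathbf{y}$) identifies $\|\mathscr{F}_x u_0\|_{L^{r'}_{\mathbf{y}}L^2_\xi}=\|u_0\|_{L^{r'}_{\mathbf{y}}L^2_x}$, which closes the estimate. The endpoints are handled by the same scheme: at $r=2$ one has $s=0$ and the claim reduces to the $L^2$-isometry of $U(t)$, while at $r=\infty$ one uses the standard $L^1\to L^\infty$ Schr\"odinger decay in $\mathbb{R}^{d-1}$.

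I expect the main obstacle to be bookkeeping rather than hard analysis: the two norm-interchange steps must be oriented correctly, using $r\geq 2$ for the first Minkowski swap and $r'\leq 2$ for the second, and Plancherel must be invoked only on the scalar $L^2_x$ factor for fixed $\mathbf{y}$, never as a vector-valued identity into $L^r_{\mathbf{y}}$, which would be false for $r\neq 2$. The one genuinely structural feature is the exact cancellation of the $x$-frequency weights, which is precisely what dictates the exponent $s=(d-1)(\tfrac12-\tfrac1r)$ and the placement of $D_x^s$ on the left-hand side.
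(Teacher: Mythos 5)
Your proposal is correct and follows essentially the same route as the paper: partial Fourier transform in $x$ to factor $U(t)$ into the $(d-1)$-dimensional Schr\"odinger flows $e^{-it\xi\Delta_{\mathbf{y}}}$ with time parameter $t\xi$, Plancherel in $x$, a Minkowski interchange valid for $r\geq 2$, the dispersive bound \eqref{hShr-PrelDisDecay0} producing $|t\xi|^{-(d-1)(\frac12-\frac1r)}$ whose $|\xi|$-factor is absorbed by the $D_x$-weight, and a second Minkowski interchange (using $r'\leq 2$) plus Plancherel to recover $\|u_0\|_{L^{r'}_{\mathbf{y}}L^2_x}$. The only cosmetic difference is that the paper proves the equivalent statement with $D_x^{-(d-1)(\frac12-\frac1r)}$ placed on the initial data, while you carry $D_x^{(d-1)(\frac12-\frac1r)}$ on the left; your write-up is in fact slightly more careful, making the unimodular phase $e^{it\xi^3}$ and the orientation of both Minkowski swaps explicit.
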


	\begin{proof}
Denote $\Delta_{\mathbf{y}}=\sum_{j=1}^{d-1}\partial_{y_j}^2$ and  define
$$V_{\xi_1}(t)f(\mathbf{y}):=\left(e^{-it\xi_1\Delta_{\mathbf{y}}}f\right)(\mathbf{y})$$
for any fixed $\xi \in\mathbb{R}$. Then
$$U(t)u_0=\mathscr{F}^{-1}_{\xi_1}V_{\xi_1}(t)\widehat{u_0}^{\xi_1}(\xi_1, \mathbf{y}).$$

It follows from Plancherel's identity, Minkowski's inequality and \eqref{hShr-PrelDisDecay0} that
\begin{align}
\left\|U(t)u_0\right\|_{L^r_{\mathbf{y}}L^2_{x}}&= \left\|V_{\xi_1}(t)\widehat{u_0}^{\xi_1}(\xi_1, \mathbf{y})\right\|_{L^r_{\mathbf{y}}L^2_{\xi_1}} \notag \\
&\leq\left\|  \left\|e^{-it\xi_1\Delta_{\mathbf{y}}}\widehat{u_0}^{\xi_1}(\xi_1, \mathbf{y})\right\|_{L^{r}_{\mathbf{y}}} \right\|_{L^2_{\xi_1}}\notag \\
&\lesssim
	\left\| |t\xi_1|^{-(d-1)(\frac{1}{2}-\frac{1}{r})}	 \left\|\widehat{u_0}^{\xi_1}(\xi_1,\mathbf{y})\right\|_{L^{r'}_{\mathbf{y}}} \right\|_{L^2_{\xi_1}} \notag \\
&\lesssim |t|^{-(d-1)(\frac{1}{2}-\frac{1}{r})}
 	 \left\|D_{x}^{-(d-1)(\frac{1}{2}-\frac{1}{r})} u_0\right\|_{L^{r'}_{\mathbf{y}}L^2_{x}} \nonumber
	\end{align}
which implies the desired estimate \eqref{4hZK-PrelDisDecay1}.
	\end{proof}

\begin{lemma}[Theorem 2.1 in \cite{HeKi21}]\label{hZK-PrelStrichartz}
	Let $d\geq 2$  and $(q,r), (\tilde{q},\tilde{r})$  be  Schr\"odinger $(d-1)$-admissible. Then, 
	\begin{align}
		\left\|D_{x}^{\frac{1}{q}}U(t)u_0\right\|_{L_t^qL^r_{\mathbf{y}}L^2_{x}} &\lesssim
		\|u_0\|_{L^2_{x\mathbf{y}}}, \label{hZK-Stri1} \\
	\left\|D_{x}^{\frac{1}{\tilde{q}}}\int U(t-s)g(s, \cdot)ds\right\|_{L^2_{x\mathbf{y}}} &\lesssim
		\|g\|_{L^{\tilde{q}'}_tL^{\tilde{r}'}_{\mathbf{y}}L^2_{x}}, \label{hZK-Stri2} \\
	\left\|D_{x}^{\frac{1}{q}+\frac{1}{\tilde{q}}}\int_{0}^{t} U(t-s)g(s, \cdot)ds\right\|_{L_t^qL^r_{\mathbf{y}}L^2_{x}} &\lesssim
		\|g\|_{L^{\tilde{q}'}_tL^{\tilde{r}'}_{\mathbf{y}}L^2_{x}}. \label{hZK-Stri3}
	\end{align}
where $\frac{1}{\tilde{q}}+\frac{1}{\tilde{q}'}=1$ and $\frac{1}{\tilde{r}}+\frac{1}{\tilde{r}'}=1$.
\end{lemma}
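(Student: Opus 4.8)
The plan is to reduce each of the three estimates to the corresponding $(d-1)$-dimensional Schr\"odinger Strichartz estimate of Lemma \ref{hShr-PrelDisDecay} by fiberizing in the $x$-frequency variable, exactly as in the proof of Lemma \ref{4hZK-PrelDisDecay}. Writing $\xi_1$ for the variable dual to $x$ and taking the partial Fourier transform in $x$, the ZK propagator becomes $U(t)u_0=\mathscr{F}^{-1}_{\xi_1}e^{-it\xi_1\Delta_{\mathbf{y}}}\widehat{u_0}^{\xi_1}$, so for each frozen $\xi_1$ the evolution is a $(d-1)$-dimensional Schr\"odinger group run at the rescaled time $\xi_1 t$. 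By Plancherel in $x$ the norm $L^2_x$ becomes $L^2_{\xi_1}$, and $D_x^{1/q}$ becomes multiplication by $|\xi_1|^{1/q}$.

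For the homogeneous estimate \eqref{hZK-Stri1}, I would first apply Minkowski's inequality to move the innermost $L^2_{\xi_1}$ outside the $L^q_tL^r_{\mathbf{y}}$ norms (valid since $q,r\geq 2$), reducing matters to the fiberwise quantity $\bigl\||\xi_1|^{1/q}e^{-it\xi_1\Delta_{\mathbf{y}}}\widehat{u_0}^{\xi_1}\bigr\|_{L^q_tL^r_{\mathbf{y}}}$. For fixed $\xi_1$ the substitution $t\mapsto t/\xi_1$ converts the rescaled Schr\"odinger group into the genuine one at the cost of a Jacobian factor $|\xi_1|^{-1/q}$ from the $L^q_t$ norm; applying \eqref{hShr-PrelDisDecay1} then bounds this by $|\xi_1|^{1/q}\,|\xi_1|^{-1/q}\|\widehat{u_0}^{\xi_1}\|_{L^2_{\mathbf{y}}}=\|\widehat{u_0}^{\xi_1}\|_{L^2_{\mathbf{y}}}$. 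The whole point is that the gain $D_x^{1/q}$ in the statement is calibrated precisely to cancel the time-rescaling factor. Taking the $L^2_{\xi_1}$ norm and invoking Plancherel in $x$ yields $\|u_0\|_{L^2_{x\mathbf{y}}}$.

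The dual estimate \eqref{hZK-Stri2} and the retarded estimate \eqref{hZK-Stri3} follow by the same recipe, now built on the dual $(T^*)$ and retarded inhomogeneous Schr\"odinger estimates underlying Lemma \ref{hShr-PrelDisDecay}. In each case the same change of variables produces a scaling factor, namely $|\xi_1|^{-1/\tilde q}$ for \eqref{hZK-Stri2} and $|\xi_1|^{-1/q-1/\tilde q}$ for \eqref{hZK-Stri3}, that is exactly absorbed by the prefactors $D_x^{1/\tilde q}$ and $D_x^{1/q+1/\tilde q}$. On the right-hand side one uses Minkowski once more, this time to push $L^2_{\xi_1}=L^2_x$ inside past $L^{\tilde q'}_tL^{\tilde r'}_{\mathbf{y}}$, which is licit because $\tilde q',\tilde r'\leq 2$ for admissible pairs.

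The main technical point I expect is the sign of $\xi_1$ in the retarded estimate \eqref{hZK-Stri3}. Since $e^{-it\xi_1\Delta_{\mathbf{y}}}=e^{it|\xi_1|\Delta_{\mathbf{y}}}$ when $\xi_1<0$, the fiber evolution runs backward in time, and the substitution $t\mapsto t/\xi_1$ reverses the orientation of the Duhamel integral $\int_0^t$, turning the forward operator into the backward one. For the homogeneous and dual estimates this is harmless, as Strichartz bounds are invariant under time reversal; for \eqref{hZK-Stri3} one must additionally note that the retarded inhomogeneous Schr\"odinger estimate holds verbatim for the backward Duhamel operator, so that the negative-frequency fibers are controlled by the same constant. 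Collecting the positive- and negative-frequency contributions then completes the argument.
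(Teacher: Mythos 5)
The paper does not actually prove Lemma \ref{hZK-PrelStrichartz}: it imports the statement wholesale as Theorem 2.1 of \cite{HeKi21}, so there is no in-paper argument to compare against. Your proof is correct, and it fills this gap with essentially the argument used in the cited source; it is also the natural extension of the paper's own proof of Lemma \ref{4hZK-PrelDisDecay}, which performs exactly the same fiberization $U(t)u_0=\mathscr{F}^{-1}_{\xi_1}e^{-it\xi_1\Delta_{\mathbf{y}}}\widehat{u_0}^{\xi_1}$ (modulo the harmless unimodular factor $e^{it\xi_1^3}$, which neither you nor the paper writes out) but only for the dispersive bound rather than the Strichartz bounds. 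Your scaling bookkeeping checks out: for \eqref{hZK-Stri1} the factors $|\xi_1|^{1/q}\cdot|\xi_1|^{-1/q}$ cancel; for \eqref{hZK-Stri2} the Jacobian $|\xi_1|^{-1}$ from $ds$ combines with the $|\xi_1|^{1/\tilde q'}$ rescaling of the source norm to give $|\xi_1|^{-1/\tilde q}$; and for \eqref{hZK-Stri3} the net factor $|\xi_1|^{-1/q-1-(-1+1/\tilde q')}=|\xi_1|^{-1/q-1/\tilde q}$ is precisely absorbed by $D_x^{1/q+1/\tilde q}$. Both Minkowski interchanges go in the legitimate direction ($L^2_{\xi_1}$ outward past exponents $\geq 2$ on the left, inward past exponents $\leq 2$ on the right), the fiberwise constants are uniform in $\xi_1$ since the rescaled estimate is the fixed $(d-1)$-dimensional one from Lemma \ref{hShr-PrelDisDecay}, and you correctly flag and dispose of the only genuine subtlety, namely that negative frequencies $\xi_1<0$ reverse the time orientation of the Duhamel integral in \eqref{hZK-Stri3}, which is harmless by time-reversal invariance of the retarded Schr\"odinger estimate. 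Two cosmetic remarks: the constant in the Schr\"odinger retarded estimate \eqref{hShr-PrelDisDecay2} for two arbitrary admissible pairs is exactly what Keel--Tao provide, so no interpolation gap arises; and the excluded endpoint $(d-1,q,r)=(2,2,\infty)$ is inherited automatically from the $(d-1)$-admissibility hypothesis, so your reduction loses nothing.
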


{\bf{Proof of Theorem \ref{MainResult5}.}} Denote
$$\|u\|_{X(T)}:=\sup_{t\in (0, T]}|t|\| u(t,x,\mathbf{y})  \|_{L^{\infty}_{x\mathbf{y}}(\mathbb{R}^4)}.$$
Using dispersive estimate \eqref{hBOZK-PrelDisDecay002} and  H\"older inequality, we obtain
\begin{align}
&\left\| \int_{0}^{\frac{t}{2}}U(t-s) u^3\partial_{x}u (s) ds \right\|_{L^{\infty}_{x\mathbf{y}}(\mathbb{R}^4)} \notag \\ 
\lesssim& \int_{0}^{\frac{t}{2}}|t-s|^{-1}\big\|(-\Delta)^{\frac{1}{2}}u^3\partial_{x}u(s) \big\|_{L^{1}_{x\mathbf{y}}(\mathbb{R}^4)}ds\notag \\ 
 \lesssim& |t|^{-1} \int_{0}^{\frac{t}{2}}\big\|u^3(-\Delta)^{\frac{1}{2}}\partial_{x}u(s) \big\|_{L^{1}_{x\mathbf{y}}(\mathbb{R}^4)}ds+|t|^{-1}  \int_{0}^{\frac{t}{2}}\left\|[(-\Delta)^{\frac{1}{2}}, u^3]\partial_{x}u(s) \right\|_{L^{1}_{x\mathbf{y}}(\mathbb{R}^4)}ds \notag \\ 
 \lesssim& |t|^{-1}  \|u \|^3_{L^{3}_{t}L^{6}_{x\mathbf{y}}}  
\big\|(-\Delta)^{\frac{1}{2}}\partial_{x}u\big\|_{L^{\infty}_{t}L^{2}_{x\mathbf{y}}} +|t|^{-1}  \int_{0}^{\frac{t}{2}} \left\|[(-\Delta)^{\frac{1}{2}}, u^3]\partial_{x}u(s) \right\|_{L^{1}_{x\mathbf{y}}}ds\label{MainResult5a1} 
	\end{align}

Applying the analogue of \eqref{hBOZK-PrelDisDecay003a} and \eqref{hBOZK-PrelDisDecay003b} deduces 
\begin{align}
 \|u \|_{L^{3}_{t}L^{6}_{x\mathbf{y}}}\lesssim&  \|U(t)u_0 \|_{L^{3}_{t}L^{6}_{x}}+\left\| \int_{0}^{t}U(t-s) u^3\partial_{x}u (s) ds \right\|_{L^{3}_{t}L^{6}_{x}}  \notag \\ 
\lesssim&  \|u_0 \|_{\dot{H}^{\frac{1}{3}}}+\left\|(-\Delta)^{\frac{1}{3}}u^3\partial_{x}u \right\|_{L^{\frac{3}{2}}_{t}L^{\frac{6}{5}}_{x\mathbf{y}}} \notag \\ 
\lesssim&  \|u_0 \|_{\dot{H}^{\frac{1}{3}}}+\left\|u^3(-\Delta)^{\frac{1}{3}}\partial_{x}u \right\|_{L^{\frac{3}{2}}_{t}L^{\frac{6}{5}}_{x\mathbf{y}}}+\left\|[(-\Delta)^{\frac{1}{3}},u^3]\partial_{x}u \right\|_{L^{\frac{3}{2}}_{t}L^{\frac{6}{5}}_{x\mathbf{y}}} \notag \\ 
\lesssim&  \|u_0 \|_{\dot{H}^{\frac{1}{3}}}+\|  u \|^2_{L^{3}_{t}L^{6}_{x\mathbf{y}}} \|  u \|_{L^{\infty}_{tx\mathbf{y}}} \big\| (-\Delta)^{\frac{1}{3}}\partial_{x}u \big\|_{L^{\infty}_{t}L^{2}_{x\mathbf{y}}} \notag \\ 
\lesssim&  \|u_0 \|_{\dot{H}^{\frac{1}{3}}}+\|  u \|^2_{L^{3}_{t}L^{6}_{x\mathbf{y}}}\| u \|^2_{L^{\infty}_{t}H^{2}_{x\mathbf{y}}} 
\label{MainResult5a2} 
	\end{align}
If $\| u_0 \|_{H^{2}_{x\mathbf{y}}} \ll 1$ such that $\| u \|_{L^{\infty}_{t}H^{2}_{x\mathbf{y}}} <1/10$, then it follows from \eqref{MainResult5a2} that
\begin{align}
 \|u \|_{L^{3}_{t}L^{6}_{x\mathbf{y}}}< C\big(\| u_0 \|_{H^{2}_{x\mathbf{y}}}\big).
\label{MainResult5a3} 
	\end{align}
One can control the final term of RHS\eqref{MainResult5a1} by using commutator estimate and a similar argument as above. Hence, 
\begin{align}
\left\| \int_{0}^{\frac{t}{2}}U(t-s) u^3\partial_{x}u (s) ds \right\|_{L^{\infty}(\mathbb{R}^4)} 
\leq C\big(\| u_0 \|_{H^{2}_{x\mathbf{y}}}\big) |t|^{-1}. \label{MainResult5a4} 
	\end{align}

By using Sobolev inequality and anisotropic Strichartz estimate \eqref{hZK-Stri2}
\begin{align}
&\left\| \int_{\frac{t}{2}}^t U(t-s) u^3\partial_{x}u (s) ds \right\|_{L^{\infty}(\mathbb{R}^4)} \notag \\ 
\lesssim& \left\|\partial_{x} \int_{\frac{t}{2}}^tU(t-s)J^2 u^4  ds \right\|_{L^{2}(\mathbb{R}^4)}
\notag \\ 
\lesssim& \big\|\mathds{1}_{s\in [\frac{t}{2}, t]}D_{x}^{\frac{1}{2}} J^2 u^4   \big\|_{L^{2}_sL^{\frac{6}{5}}_{\mathbf{y}}L^{2}_{x}}\notag \\ 
\lesssim& \big\|\mathds{1}_{s\in [\frac{t}{2}, t]}u^3 D_{x}^{\frac{1}{2}} J^2u    \big\|_{L^{2}_sL^{\frac{6}{5}}_{\mathbf{y}}L^{2}_{x}}
+ \big\|\mathds{1}_{s\in [\frac{t}{2}, t]}[D_{x}^{\frac{1}{2}} J^2,  u^3] u   \big\|_{L^{2}_sL^{\frac{6}{5}}_{\mathbf{y}}L^{2}_{x}}. \label{MainResult5b1} 
	\end{align}
We only consider the first term in RHS\eqref{MainResult5b1}, as the second term can be controlled similarly. It is easy to see that
\begin{align} 
& \big\|\mathds{1}_{s\in [\frac{t}{2}, t]}u^3 D_{x}^{\frac{1}{2}} J^2u    \big\|_{L^{2}_sL^{\frac{6}{5}}_{\mathbf{y}}L^{2}_{x}}\notag \\ 
 \lesssim& |t|^{-1} \|u \|_{X(T)} \| u\|^2_{L^{\infty}_{t} L^{3}_{\mathbf{y}}L^{\infty}_{x}}  \big\|D_{x}^{\frac{1}{2}} J^2 u\big\|_{L^{2}_{t}L^{6}_{\mathbf{y}}L^{2}_{x}}. \label{MainResult5b2} 
	\end{align}
In view of Lemma \ref{hZK-PrelStrichartz}, we get
\begin{align}
\big\|D_{x}^{\frac{1}{2}} J^2 u\big\|_{L^{2}_{t}L^{6}_{\mathbf{y}}L^{2}_{x}}&\lesssim
\big\|D_{x}^{\frac{1}{2}} J^2 U(t)u_0\big\|_{L^{2}_{t}L^{6}_{\mathbf{y}}L^{2}_{x}}+\left\|D_{x}^{\frac{1}{2}} J^2 \int_{0}^{t} U(t-s) u^3\partial_{x}u (s) ds\right\|_{L^{2}_{t}L^{6}_{\mathbf{y}}L^{2}_{x}} \notag\\
&\lesssim \big\|J^2u_0 \big\|_{L^{2}_{x}}+\big\|D^{\frac{1}{2}}_{x}J^2 u^4 \big\|_{L^{2}_{t}L^{\frac{6}{5}}_{\mathbf{y}}L^{2}_{x}}
\notag\\
&\lesssim \big\|J^2u_0 \big\|_{L^{2}_{x}}+\big\|u^3D^{\frac{1}{2}}_{x}J^2 u \big\|_{L^{2}_{t}L^{\frac{6}{5}}_{\mathbf{y}}L^{2}_{x}}+\big\|[D^{\frac{1}{2}}_{x}J^2, u^3] u \big\|_{L^{2}_{t}L^{\frac{6}{5}}_{\mathbf{y}}L^{2}_{x}}
\notag\\
&\lesssim \|u_0 \|_{H^{2}_{x}}+\| u\|^3_{L^{\infty}_{t} L^{\frac{9}{2}}_{\mathbf{y}}L^{\infty}_{x}}  \big\|D_{x}^{\frac{1}{2}} J^2 u\big\|_{L^{2}_{t}L^{6}_{\mathbf{y}}L^{2}_{x}}
\notag\\
&\lesssim \|u_0 \|_{H^{2}_{x}}+\| u\|^3_{L^{\infty}_{t} H^{2}_{x\mathbf{y}}}  \big\|D_{x}^{\frac{1}{2}} J^2 u\big\|_{L^{2}_{t}L^{6}_{\mathbf{y}}L^{2}_{x}}\nonumber
	\end{align}
from which it follows 
\begin{align}
\big\|D_{x}^{\frac{1}{2}} J^2 u\big\|_{L^{2}_{t}L^{6}_{\mathbf{y}}L^{2}_{x}}<
 C\big(\|u_0 \|_{H^{2}_{x}}\big) \label{MainResult5b3} 
	\end{align}
provided that $\|u_0 \|_{H^{2}_{x}}\ll 1$.

Collecting \eqref{MainResult5b1}, \eqref{MainResult5b2} and \eqref{MainResult5b3} yields
\begin{align}
\left\| \int_{\frac{t}{2}}^tU(t-s) u^3\partial_{x}u (s) ds \right\|_{L^{\infty}(\mathbb{R}^4)} 
\leq C\big(\| u_0 \|_{H^{2}_{x\mathbf{y}}}\big) |t|^{-1}\|u \|_{X(T)} . \label{MainResult5b4} 
	\end{align}

Then, from \eqref{hBOZK-PrelDisDecay002}, \eqref{MainResult5a4} and  \eqref{MainResult5b4}, we obtain
\begin{align}
\|u \|_{X(T)}  \leq C\big(\| u_0 \|_{H^{2}_{x\mathbf{y}}}\big) \left(\|(-\Delta)^{\frac{1}{2}} u_0 \|_{L^{1}_{x\mathbf{y}}}+1+\|u \|_{X(T)}\right) . \nonumber
	\end{align}
which implies \eqref{MainResult5AA} by taking $\| u_0 \|_{H^{2}_{x\mathbf{y}}}\ll1$.

Netx, let us turn to prove \eqref{MainResult5BB}. Utilizing anisotropic dispersive estimate \eqref{4hZK-PrelDisDecay1} with  $r=6$, one gets
\begin{align}
\left\| \partial_{x}\int_{0}^{\frac{t}{2}}U(t-s) u^3\partial_{x}u (s) ds \right\|_{L^{6}_{\mathbf{y}}L^2_{x}} 
\lesssim& \int_{0}^{\frac{t}{2}}|t-s|^{-1}\big\|u^3\partial_{x}u(s) \big\|_{L^{\frac{6}{5}}_{\mathbf{y}}L^2_{x}}ds\notag \\ 
 \lesssim& |t|^{-1} \int_{0}^{\frac{t}{2}} \|u \|^3_{L^{\frac{9}{2}}_{\mathbf{y}}L^{\infty}_{x}}\|\partial_{x}u\|_{L^{6}_{\mathbf{y}}L^{2}_{x}} ds\notag \\ 
\lesssim&  |t|^{-1}  \|u \|^3_{L^{6}_{t}L^{\frac{9}{2}}_{\mathbf{y}}L^{\infty}_{x}}\|\partial_{x}u\|_{L^{2}_{t}L^{6}_{\mathbf{y}}L^{2}_{x}} .   \label{MainResult5c1} 
	\end{align}
Arguing similarly as  \eqref{MainResult5b3}, we have 
\begin{align}
\big\|\partial_{x} u\big\|_{L^{2}_{t}L^{6}_{\mathbf{y}}L^{2}_{x}}<
 C\big(\|u_0 \|_{H^{2}_{x}}\big). \label{MainResult5c2} 
	\end{align}
To estimate the contribution of the first term on RHS\eqref{MainResult5c1}, we employ Sobolev inequality and Strichartz estimates (see \eqref{hBOZK-PrelDisDecay003a} and \eqref{hBOZK-PrelDisDecay003b})
\begin{align}
 \|u \|_{L^{6}_{t}L^{\frac{9}{2}}_{\mathbf{y}}L^{\infty}_{x}}\lesssim \|J^{\frac{2}{3}}u \|_{L^{6}_{t}L^{3}_{x\mathbf{y}}} &\lesssim \|J^{\frac{2}{3}}U(t)u_0 \|_{L^{6}_{t}L^{3}_{x\mathbf{y}}}+\left\|J^{\frac{2}{3}}\int_{0}^{t} U(t-s) u^3\partial_{x}u (s) ds \right\|_{L^{6}_{t}L^{3}_{x\mathbf{y}}}\notag\\
&\lesssim \big\|J^{\frac{2}{3}}u_0 \big\|_{H^{\frac{1}{6}}_{x\mathbf{y}}}+\left\|J^{\frac{5}{6}}(u^3\partial_{x}u) \right\|_{L^{1}_{t}L^{2}_{x\mathbf{y}}}\notag\\
&\lesssim \|u_0 \|_{H^{1}_{x\mathbf{y}}}+\big\|u^3J^{\frac{5}{6}}\partial_{x}u \big\|_{L^{1}_{t}L^{2}_{x\mathbf{y}}}+\left\|[J^{\frac{5}{6}},u^3] \partial_{x}u \right\|_{L^{1}_{t}L^{2}_{x\mathbf{y}}}.  \label{MainResult5c3} 
	\end{align}
Note that
\begin{align}
\big\|u^3J^{\frac{5}{6}}\partial_{x}u \big\|_{L^{1}_{t}L^{2}_{x\mathbf{y}}}\lesssim& \|u \|^3_{L^{3}_{t}L^{\infty}_{x\mathbf{y}}}\big\|J^{\frac{5}{6}}\partial_{x}u \big\|_{L^{\infty}_{t}L^{2}_{x\mathbf{y}}}\lesssim \|J^{\frac{2}{3}}u \|^3_{L^{3}_{t}L^{6}_{x\mathbf{y}}}\|u \|_{L^{\infty}_{t}H^{2}_{x\mathbf{y}}}.  \label{MainResult5c4} 
	\end{align}
Proceeding directly as above yields
\begin{align}
 \|J^{\frac{2}{3}}u \|_{L^{3}_{t}L^{6}_{x\mathbf{y}}} &\lesssim \|J^{\frac{2}{3}}U(t)u_0 \|_{L^{3}_{t}L^{6}_{x\mathbf{y}}}+\left\|J^{\frac{2}{3}}\int_{0}^{t} U(t-s) u^3\partial_{x}u (s) ds \right\|_{L^{3}_{t}L^{6}_{x\mathbf{y}}}\notag\\
&\lesssim \big\|J^{\frac{2}{3}}u_0 \big\|_{H^{\frac{1}{3}}_{x\mathbf{y}}}+\left\|J(u^3\partial_{x}u) \right\|_{L^{1}_{t}L^{2}_{x\mathbf{y}}}\notag\\
&\lesssim \|u_0 \|_{H^{1}_{x\mathbf{y}}}+\|u \|^3_{L^{3}_{t}L^{\infty}_{x\mathbf{y}}}\big\|J\partial_{x}u \big\|_{L^{\infty}_{t}L^{2}_{x\mathbf{y}}}\notag\\
&\lesssim \|u_0 \|_{H^{1}_{x\mathbf{y}}}+\|J^{\frac{2}{3}}u \|^3_{L^{3}_{t}L^{6}_{x\mathbf{y}}}\|u \|_{L^{\infty}_{t}H^{2}_{x\mathbf{y}}}\nonumber  
	\end{align}
which immediately deduces that
\begin{align}
 \|J^{\frac{2}{3}}u \|_{L^{3}_{t}L^{6}_{x\mathbf{y}}}\lesssim \|u_0 \|_{H^{2}_{x\mathbf{y}}}\label{MainResult5c5} 
	\end{align}
as long as $\|u_0\|_{H^{2}_{x}}\ll 1$.

Collecting \eqref{MainResult5c1}-\eqref{MainResult5c5} gives
\begin{align}
\left\| \partial_{x}\int_{0}^{\frac{t}{2}}U(t-s) u^3\partial_{x}u (s) ds \right\|_{L^{6}_{\mathbf{y}}L^2_{x}} 
\lesssim C\big(\|u_0\|_{H^{2}_{x}}\big)|t|^{-1}.   \label{MainResult5c6} 
	\end{align}

To address the remaining term, we combine Sobolev inequality, Strichartz estimates, estimate \eqref{MainResult5AA} and \eqref{MainResult5b3}
\begin{align}
&\left\| \partial_{x}\int_{\frac{t}{2}}^t U(t-s) u^3\partial_{x}u (s) ds \right\|_{L^{6}_{\mathbf{y}}L^2_{x}} \notag \\ 
 \lesssim&\left\| \partial_{x}\int_{\frac{t}{2}}^t U(t-s)J  \big(u^3\partial_{x}u\big)(s) ds \right\|_{L^2_{x\mathbf{y}}} \notag \\ 
\lesssim& \left\| \mathds{1}_{s\in [\frac{t}{2}, t] }D^{\frac{1}{2}}_{x}J \big(u^3\partial_{x}u\big)\right\|_{L^{2}_{s}L^{\frac{6}{5}}_{\mathbf{y}}L^{2}_{x}} \notag \\ 
\lesssim& \left\| \mathds{1}_{s\in [\frac{t}{2}, t] }u^3D^{\frac{1}{2}}_{x}J \partial_{x}u\right\|_{L^{2}_{s}L^{\frac{6}{5}}_{\mathbf{y}}L^{2}_{x}}+\left\| \mathds{1}_{s\in [\frac{t}{2}, t] }[D^{\frac{1}{2}}_{x}J, u^3] \partial_{x}u\right\|_{L^{2}_{s}L^{\frac{6}{5}}_{\mathbf{y}}L^{2}_{x}} \notag \\ 
\lesssim& |t|^{-1} \|u\|^2_{L^{\infty}_{s}L^{3}_{\mathbf{y}}L^{\infty}_{x}}\big\| D^{\frac{1}{2}}_{x}J^2u \big\|_{L^{2}_{t}L^{6}_{\mathbf{y}}L^{2}_{x}}
\lesssim |t|^{-1} \label{MainResult5d1}
	\end{align}

Then, it is  easy to deduce \eqref{MainResult5BB} from dispersive estimate \eqref{4hZK-PrelDisDecay1},   \eqref{MainResult5c6}  and \eqref{MainResult5d1}. We finish the proof of the theorem.\hspace{96.5mm}$\square$

\section*{Acknowledgments}
M.S is partially supported by the NSFC, Grant No. 12101629.

\footnotesize

\end{document}